\definecolor{webblue}{rgb}{0,.5,0}
\definecolor{webred}{rgb}{0,.5,0}
\definecolor{webbrown}{rgb}{.6,0,0}
\newtheorem{thm}{Theorem}[section]
\newtheorem{lem}[thm]{Lemma}
\newtheorem{cor}[thm]{Corollary}
\newtheorem{prop}[thm]{Proposition}
\theoremstyle{definition}
\newtheorem{definition}[thm]{Definition}
\newtheorem{cl}[thm]{Claim}
\newtheorem{rem}[thm]{Remark}
\numberwithin{equation}{section}
\newcommand{\sgn}{\text{\rm sgn}}
\newcommand{\balpha}{{\bm{\alpha}}}
\newcommand{\bbeta}{{\bm{\beta}}}
\newcommand{\bgamma}{{\bm{\gamma}}}
\newcommand{\bmu}{{\bm{\mu}}}
\newenvironment{scarray}{
             \textfont0=\scriptfont0
             \scriptfont0=\scriptscriptfont0
             \textfont1=\scriptfont1
             \scriptfont1=\scriptscriptfont1
             \textfont2=\scriptfont2
             \scriptfont2=\scriptscriptfont2
             \textfont3=\scriptfont3
             \scriptfont3=\scriptscriptfont3
           
           \begin{array}{c}}{\end{array}}
\def\hboxscript#1{ {\hbox{\scriptsize\em #1}} }
\title{Total positivity from a kind of lattice paths
\thanks{Supported partially by the National Natural Science Foundation of China (Nos. 12022105, 11971206)}}
\author{Yu-Jie Cui, Bao-Xuan Zhu\footnote{Corresponding author
\newline\hspace*{5mm}
   {\it Email address:}
    bxzhu@jsnu.edu.cn(B.-X. Zhu)}}
\date{\footnotesize  School of Mathematics and Statistics,
          Jiangsu Normal University,
         Xuzhou 221116, P. R. China\\
         }
\begin{document}

\maketitle
\begin{abstract}
Total positivity of matrices is deeply studied and plays an
important role in various branches of mathematics. The main purpose
of this paper is to study total positivity of a matrix
$M=[M_{n,k}]_{n,k}$ generated by the weighted lattice paths in
$\mathbb{N}^2$ from the origin $(0,0)$ to the point $(k,n)$
consisting of types of steps: $(0,1)$ and $(1,t+i)$ for $0\leq i\leq
\ell$, where each step $(0,1)$ from height~$n-1$ gets the
weight~$b_n(\textbf{y})$ and each step $(1,t+i)$ from height~$n-t-i$
gets the weight $a_n^{(i)}(\textbf{x})$.

Using an algebraic method, we prove that the $\textbf{x}$-total
positivity of the weight matrix $[a_i^{(i-j)}(\textbf{x})]_{i,j}$
implies that of $M$. Furthermore, using the
Lindstr\"{o}m-Gessel-Viennot lemma, we obtain that both $M$ and the
Toeplitz matrix of each row sequence of $M$ with $t\geq1$ are
$\textbf{x}$-totally positive under the following three cases
respectively: (1) $\ell=1$, (2) $\ell=2$ and restrictions for
$a_n^{(i)}$, (3) general $\ell$ and both $a^{(i)}_n$ and $b_n$ are
independent of $n$. In addition, for the case (3), we show that the
matrix $M$ is a Riordan array, present its explicit formula and
prove total positivity of the Toeplitz matrix of the each column of
$M$. In particular, from the results for Toeplitz-total positivity,
we also obtain the P\'olya frequency and log-concavity of the
corresponding sequence.

Finally, as applications, we in a unified manner establish total
positivity and the Toeplitz-total positivity for many well-known
combinatorial triangles, including the Pascal triangle, the Pascal
square, the Delannoy triangle, the Delannoy square, the signless
Stirling triangle of the first kind, the Legendre-Stirling triangle
of the first kind, the Jacobi-Stirling triangle of the first kind,
the Brenti's recursive matrix, and so on. In particular, we extend
many known results of Brenti (J. Combin. Theory Ser. A, 1995), Yu
(Adv. Appl. Math., 2009),  Mongelli (Adv. Appl. Math., 2012), Zhu
(Proc. Amer. Math. Soc., 2014), Mu and Zheng (J. Integer Seq.,
2017), and so on.
  \bigskip\\
  {\sl MSC:}\quad 05A20; 05A15; 15B05;
  \\[7pt]
  {\sl Keywords:}\quad  Lattice paths; Total positivity; P\'olya frequency; Log-concavity; Toeplitz matrix; Recurrence
  relations; Delannoy triangle; Pascal triangle; Stirling triangle; Legendre-Stirling
triangle; Jacobi-Stirling triangle

\end{abstract}
\tableofcontents
\section{Introduction}
\subsection{Motivation}
Lattice paths have been extensively studied in combinatorics and
have many applications in various domains such as computer science,
biology and physics \cite{Aig01,Bre95,KP99,Moh79,Nar79,SX08}.

Many well known combinatorial numbers can be interpreted by certain
lattice paths. For example, binomial coefficients have a great
number of nice properties in combinatorics and number theory. It is
known that the binomial coefficient $\binom{n}{k}$ counts the number
of lattice paths in $\mathbb{N}^2$ from $(0,0)$ to $(k,n-k)$ with
horizontal steps $(1,0)$ and vertical steps $(0,1)$ (see Figure~1
for a lattice path from $(0,0)$ to $(5,3)$), which satisfies the
recurrence relation
\begin{equation}\label{rec+binom}
    \binom{n}{k}=\binom{n-1}{k-1}+\binom{n-1}{k}.
\end{equation}

\begin{center}
\setlength{\unitlength}{1.2cm}
\begin{picture}(12,5.1)(-6.3,-2)
\thicklines\put(-3,-1){\line(1,0){6}}
\thicklines\put(-3,-1){\line(0,1){4}}

\thicklines\put(-3,-1){\line(1,0){1}}\thicklines\put(-3,-0.975){\line(1,0){1}}\thicklines\put(-3,-0.985){\line(1,0){1}}
\thicklines\put(-2,-1){\line(0,1){1}}\thicklines\put(-1.975,-0.975){\line(0,1){1}}\thicklines\put(-1.985,-0.985){\line(0,1){1}}
\thicklines\put(-2,0){\line(1,0){1}}\thicklines\put(-2,0.025){\line(1,0){1}}\thicklines\put(-2,0.015){\line(1,0){1}}
\thicklines\put(-1,0){\line(0,1){1}}\thicklines\put(-0.975,0.025){\line(0,1){1}}\thicklines\put(-0.985,0.015){\line(0,1){1}}
\thicklines\put(-1,1){\line(1,0){2}}\thicklines\put(-1,1.025){\line(1,0){2}}\thicklines\put(-1,1.015){\line(1,0){2}}
\thicklines\put(1,1){\line(0,1){1}}\thicklines\put(1.025,1.025){\line(0,1){1}}\thicklines\put(1.015,1.015){\line(0,1){1}}
\thicklines\put(1,2){\line(1,0){1}}\thicklines\put(1,2.025){\line(1,0){1}}\thicklines\put(1,2.015){\line(1,0){1}}

\put(-1.99,-1){\circle*{0.12}}\put(-1,-1){\circle*{0.12}}\put(0,-1){\circle*{0.12}}\put(1,-1){\circle*{0.12}}
\put(2,-1){\circle*{0.12}}

\put(-3,-1){\circle*{0.12}}\put(-3,0){\circle*{0.12}}\put(-3,1){\circle*{0.12}}\put(-3,2){\circle*{0.12}}

\put(-1.99,0.01){\circle*{0.12}}\put(-0.99,0.01){\circle*{0.12}}\put(-0.99,1.01){\circle*{0.12}}\put(0,1.01){\circle*{0.12}}
\put(1.01,1.01){\circle*{0.12}}\put(2,2.01){\circle*{0.12}}\put(1.01,2.01){\circle*{0.12}}
\scriptsize
\thicklines\put(2.63,-1.05){\textbf{$\longrightarrow$}}\thicklines\put(-3.06,2.85){\textbf{$\uparrow$}}
\put(-3.2,2.9){$y$}\put(2.9,-1.2){$x$}
\put(-3.65,-1.2){$(0,$}\put(-3.35,-1.2){$0)$}
\put(-3.65,-0.05){$(0,$}\put(-3.35,-0.05){$1)$}\put(-3.65,0.95){$(0,$}\put(-3.35,0.95){$2)$}
\put(-3.65,1.95){$(0,$}\put(-3.35,1.95){$3)$}
\put(-2.26,-1.3){$(1,$}\put(-1.93,-1.3){$0)$}\put(-1.26,-1.3){$(2,$}\put(-0.96,-1.3){$0)$}
\put(-0.26,-1.3){$(3,$}\put(0.07,-1.3){$0)$}\put(0.74,-1.3){$(4,$}\put(1.07,-1.3){$0)$}
\put(1.74,-1.3){$(5,$}\put(2.07,-1.3){$0)$}

\put(2.1,1.94){$(5,$}\put(2.43,1.94){$3)$}
\normalsize
\put(-3.5,-2){Figure~1: A lattice path from $(0,0)$ to $(5,3)$.}
\end{picture}
\end{center}
The oldest Pascal triangle $\mathcal {P}=[\binom{n}{k}]_{n,k}$ is
formed by binomial coefficients. In \cite[pp. 137]{Kar68}, Karlin
proved that the Pascal triangle $\mathcal {P}$ is totally positive.
In addition, for the total positivity of $\mathcal {P}$, one found
many different ways (see \cite{CLW15,FZ00,Pin10,Zhu14} for
instance). Let $C_i=\binom{n+\delta i}{k+\sigma i}$, where $n\geq
k$. Then for integers $\sigma>\delta>0$ and $\sigma>k$, the Toeplitz
matrix $[C_{i-j}]_{i,j\geq0}$ of the finite sequence $(C_i)_{i}$ is
totally positive, which was conjectured by Su and Wang \cite{SW08}
and confirmed by Yu \cite{Yu09}. If binomial coefficients are
arranged as a symmetric square matrix $\mathcal
{P}^{\ulcorner}=[\binom{n+k}{k}]_{n,k}$, then $\mathcal
{P}^{\ulcorner}$ is called {\it the Pascal square}. If let $\mathcal
{P}^{\ulcorner}_{n,k}=\binom{n+k}{k}$, then $\mathcal
{P}^{\ulcorner}_{n,k}$ counts the number of lattice paths in
$\mathbb{N}^2$ from the point $(0,0)$ to the point $(k,n)$ using
steps $(1,0)$ and $(0,1)$ (see Figure~2 for a lattice path from
$(0,0)$ to $(5,3)$)
 and satisfies the recurrence relation
\begin{equation}\label{rec+Pas+square}
    \mathcal
{P}^{\ulcorner}_{n,k}=\mathcal {P}^{\ulcorner}_{n,k-1}+\mathcal
{P}^{\ulcorner}_{n-1,k}.
\end{equation}

\begin{center}
\setlength{\unitlength}{1.2cm}
\begin{picture}(12,5.1)(-6.3,-2)
\thicklines\put(-3,-1){\line(1,0){6}}
\thicklines\put(-3,-1){\line(0,1){4}}\thicklines\put(-2.975,-0.975){\line(0,1){1}}\thicklines\put(-2.985,-0.985){\line(0,1){1}}

\thicklines\put(-3,0){\line(1,0){2}}\thicklines\put(-3,0.025){\line(1,0){2}}\thicklines\put(-3,0.015){\line(1,0){2}}
\thicklines\put(-1,0){\line(0,1){1}}\thicklines\put(-0.975,0.025){\line(0,1){1}}\thicklines\put(-0.985,0.015){\line(0,1){1}}
\thicklines\put(-1,1){\line(1,0){1}}\thicklines\put(-1,1.025){\line(1,0){1}}\thicklines\put(-1,1.015){\line(1,0){1}}
\thicklines\put(0,1){\line(0,1){1}}\thicklines\put(0.025,1.025){\line(0,1){1}}\thicklines\put(0.015,1.015){\line(0,1){1}}
\thicklines\put(0,2){\line(1,0){2}}\thicklines\put(0,2.025){\line(1,0){2}}\thicklines\put(0,2.015){\line(1,0){2}}

\put(-2,-1){\circle*{0.12}}\put(-1,-1){\circle*{0.12}}\put(0,-1){\circle*{0.12}}\put(1,-1){\circle*{0.12}}
\put(2,-1){\circle*{0.12}}

\put(-2.99,-1){\circle*{0.12}}\put(-2.99,0.01){\circle*{0.12}}\put(-3,1){\circle*{0.12}}\put(-3,2){\circle*{0.12}}

\put(-2,0.01){\circle*{0.12}}\put(-0.99,0.01){\circle*{0.12}}\put(-0.98,1.01){\circle*{0.12}}\put(0.01,1.01){\circle*{0.12}}
\put(0.01,2.01){\circle*{0.12}}\put(1,2.01){\circle*{0.12}}\put(2,2.01){\circle*{0.12}}
\scriptsize
\thicklines\put(2.63,-1.05){\textbf{$\longrightarrow$}}\thicklines\put(-3.06,2.85){\textbf{$\uparrow$}}
\put(-3.2,2.9){$y$}\put(2.9,-1.2){$x$}
\put(-3.65,-1.2){$(0,$}\put(-3.35,-1.2){$0)$}
\put(-3.65,-0.05){$(0,$}\put(-3.35,-0.05){$1)$}\put(-3.65,0.95){$(0,$}\put(-3.35,0.95){$2)$}
\put(-3.65,1.95){$(0,$}\put(-3.35,1.95){$3)$}
\put(-2.26,-1.3){$(1,$}\put(-1.93,-1.3){$0)$}\put(-1.26,-1.3){$(2,$}\put(-0.96,-1.3){$0)$}
\put(-0.26,-1.3){$(3,$}\put(0.07,-1.3){$0)$}\put(0.74,-1.3){$(4,$}\put(1.07,-1.3){$0)$}
\put(1.74,-1.3){$(5,$}\put(2.07,-1.3){$0)$}

\put(2.1,1.94){$(5,$}\put(2.43,1.94){$3)$}
\normalsize
\put(-3.5,-2){Figure~2: A lattice path from $(0,0)$ to $(5,3)$.}
\end{picture}
\end{center}
In fact, $\mathcal {P}^{\ulcorner}=\mathcal {P}\mathcal {P}^T$. Note
that the product of two matrices preserves their total positivity.
So the Pascal square $\mathcal {P}^{\ulcorner}$ is also totally
positive.

The Delannoy number $D_{n,k}$ from the name of Henri Delannoy (see
\cite{BS05} for historical remarks) is the number of lattice paths
in $\mathbb{N}^2$ from $(0,0)$ to $(k,n)$ using horizontal steps
$(1,0)$, vertical steps $(0,1)$, and diagonal steps $(1,1)$ (see
Figure~3 for a lattice path from $(0,0)$ to $(5,3)$).

\begin{center}
\setlength{\unitlength}{1.2cm}
\begin{picture}(12,5.1)(-6.3,-2)
\thicklines\put(-3,-1){\line(1,0){6}}
\thicklines\put(-3,-1){\line(0,1){4}}
\thicklines\put(-3,-1){\line(0,1){4}}
\thicklines\put(-3,-1){\line(1,1){1}}\thicklines\put(-3.005,-0.975){\line(1,1){0.97}}\thicklines\put(-3.007,-0.965){\line(1,1){0.97}}
\thicklines\put(-2,0){\line(1,0){1}}\thicklines\put(-2,0.025){\line(1,0){1}}\thicklines\put(-2,0.015){\line(1,0){1}}
\thicklines\put(-1,0){\line(0,1){1}}\thicklines\put(-0.975,0.025){\line(0,1){1}}\thicklines\put(-0.985,0.015){\line(0,1){1}}
\thicklines\put(-1,1){\line(1,0){2}}\thicklines\put(-1,1.025){\line(1,0){2}}\thicklines\put(-1,1.015){\line(1,0){2}}
\thicklines\put(1,1){\line(1,1){1}}\thicklines\put(0.995,1.025){\line(1,1){0.97}}\thicklines\put(0.993,1.035){\line(1,1){0.97}}

\put(-2,-1){\circle*{0.13}}\put(-1,-1){\circle*{0.13}}\put(0,-1){\circle*{0.13}}\put(1,-1){\circle*{0.13}}
\put(2,-1){\circle*{0.13}}

\put(-3,-1){\circle*{0.13}}\put(-3,0){\circle*{0.13}}\put(-3,1){\circle*{0.13}}\put(-3,2){\circle*{0.13}}

\put(-2,0){\circle*{0.13}}\put(-1,0){\circle*{0.13}}\put(-0.99,1){\circle*{0.13}}\put(0,1.01){\circle*{0.13}}
\put(1,1.02){\circle*{0.13}}\put(2,2){\circle*{0.13}}
\scriptsize
\thicklines\put(2.63,-1.05){\textbf{$\longrightarrow$}}\thicklines\put(-3.06,2.85){\textbf{$\uparrow$}}
\put(-3.2,2.9){$y$}\put(2.9,-1.2){$x$}
\put(-3.65,-1.2){$(0,$}\put(-3.35,-1.2){$0)$}
\put(-3.65,-0.05){$(0,$}\put(-3.35,-0.05){$1)$}\put(-3.65,0.95){$(0,$}\put(-3.35,0.95){$2)$}
\put(-3.65,1.95){$(0,$}\put(-3.35,1.95){$3)$}
\put(-2.26,-1.3){$(1,$}\put(-1.93,-1.3){$0)$}\put(-1.26,-1.3){$(2,$}\put(-0.96,-1.3){$0)$}
\put(-0.26,-1.3){$(3,$}\put(0.07,-1.3){$0)$}\put(0.74,-1.3){$(4,$}\put(1.07,-1.3){$0)$}
\put(1.74,-1.3){$(5,$}\put(2.07,-1.3){$0)$}

\put(2.1,1.94){$(5,$}\put(2.43,1.94){$3)$}
\normalsize
\put(-3.5,-2){Figure~3: A lattice path from $(0,0)$ to $(5,3)$.}
\end{picture}
\end{center}
 The Delannoy numbers share many similar properties with binomial
coefficients. For instance, $D_{n,k}$ satisfies the recurrence
relation
\begin{equation}\label{rec+Dela+squre}
    D_{n,k}=D_{n,k-1}+D_{n-1,k-1}+D_{n-1,k}
\end{equation}
with $D_{0,k} = D_{k,0}=1$. The matrix $\mathcal
{D}^{\ulcorner}=[D_{n,k}]_{n,k}$ is called {\it the Delannoy square}
and the infinite lower triangular matrix $\mathcal
{D}=[D_{n-k,k}]_{n,k}$ is called {\it the Delannoy triangle}. Let
$d_{n,k}=D_{n-k,k}$. Clearly, $d_{n,k}$ counts the number of lattice
paths in $\mathbb{N}^2$ from $(0,0)$ to $(k,n-k)$ using steps $(1,
0)$, $(0, 1)$ and $(1,1)$ (see Figure~4 for a lattice path from
$(0,0)$ to $(5,3)$) and satisfies
\begin{equation}\label{rec+dela+tri}
 d_{n,k}=d_{n-2,k-1}+d_{n-1,k-1}+d_{n-1,k},
\end{equation}
where $d_{0,0}=1$ and $d_{n,k}=0$ unless $n\geq k \geq0$. Brenti
\cite{Bre95} proved that both $\mathcal {D}^{\ulcorner}$ and
$\mathcal {D}$ are totally positive. Let $D_i=(d_{n+\delta
i,k+\sigma i})_{i}$, where $n\geq k$. Then for integers
$\sigma>\delta>0$ and $\sigma>k$, using a combinatorial
interpretation, Yu \cite{Yu09} proved that the Toeplitz matrix
$[D_{i-j}]_{i,j\geq0}$ is totally positive. Recently, Wang, Zheng
and Chen also studied some analytic properties of Delannoy numbers
\cite{WZC19}.

\begin{center}
\setlength{\unitlength}{1.2cm}
\begin{picture}(12,5.1)(-6.3,-2)
\thicklines\put(-3,-1){\line(1,0){6}}
\thicklines\put(-3,-1){\line(0,1){4}}

\thicklines\put(-3,-1){\line(1,1){1}}\thicklines\put(-3.005,-0.975){\line(1,1){0.97}}\thicklines\put(-3.007,-0.965){\line(1,1){0.97}}
\thicklines\put(-2,0){\line(1,0){2}}\thicklines\put(-2,0.025){\line(1,0){2}}\thicklines\put(-2,0.015){\line(1,0){2}}
\thicklines\put(0,0){\line(0,1){2}}\thicklines\put(0.025,0.025){\line(0,1){2}}\thicklines\put(0.015,0.015){\line(0,1){2}}
\thicklines\put(0,2){\line(1,0){2}}\thicklines\put(0,2.025){\line(1,0){2}}\thicklines\put(0,2.015){\line(1,0){2}}

\put(-2,-1){\circle*{0.12}}\put(-1,-1){\circle*{0.12}}\put(0,-1){\circle*{0.12}}\put(1,-1){\circle*{0.12}}
\put(2,-1){\circle*{0.12}}

\put(-3,-1){\circle*{0.12}}\put(-3,0){\circle*{0.12}}\put(-3,1){\circle*{0.12}}\put(-3,2){\circle*{0.12}}

\put(-2,0){\circle*{0.12}}\put(-1,0.01){\circle*{0.12}}\put(0.01,0.01){\circle*{0.12}}\put(0.01,1){\circle*{0.12}}
\put(0.015,2.01){\circle*{0.12}}\put(1,2.01){\circle*{0.12}}\put(2,2.01){\circle*{0.12}}
\scriptsize
\thicklines\put(2.63,-1.05){\textbf{$\longrightarrow$}}\thicklines\put(-3.06,2.85){\textbf{$\uparrow$}}
\put(-3.2,2.9){$y$}\put(2.9,-1.2){$x$}
\put(-3.65,-1.2){$(0,$}\put(-3.35,-1.2){$0)$}
\put(-3.65,-0.05){$(0,$}\put(-3.35,-0.05){$1)$}\put(-3.65,0.95){$(0,$}\put(-3.35,0.95){$2)$}
\put(-3.65,1.95){$(0,$}\put(-3.35,1.95){$3)$}
\put(-2.26,-1.3){$(1,$}\put(-1.93,-1.3){$0)$}\put(-1.26,-1.3){$(2,$}\put(-0.96,-1.3){$0)$}
\put(-0.26,-1.3){$(3,$}\put(0.07,-1.3){$0)$}\put(0.74,-1.3){$(4,$}\put(1.07,-1.3){$0)$}
\put(1.74,-1.3){$(5,$}\put(2.07,-1.3){$0)$}

\put(2.1,1.94){$(5,$}\put(2.43,1.94){$3)$}
\normalsize
\put(-3.5,-2){Figure~4: A lattice path from $(0,0)$ to $(5,3)$.}
\end{picture}
\end{center}

In \cite{Bre95}, more generally, Brenti considered a class of
recursive matrices. Precisely, let
 $\textbf{x}=\{x_n\}_{n\geq0}$,
$\textbf{y}=\{y_n\}_{n\geq0}$ and $\textbf{z}=\{z_n\}_{n\geq0}$ be
sets of indeterminates and define an infinite matrix $
A=[A_{n,k}]_{n,k\geq0}$ by
\begin{equation}\label{rec+Brenti}
A_{0,0}=1,\qquad
A_{n,k}=z_{n}A_{n-t,k-1}+y_{n}A_{n-1-t,k-1}+x_{n}A_{n-1,k}
\end{equation}
for $n+k\geq1$, where $t\in \mathbb{N}$, $A_{n,k}=0$ if either $n<0$
or $k<0$. Obviously, this recursive matrix $A$ is a generalization
of the Pascal triangle, the Delannoy triangle, and their square
matrices. Using a planar network method, Brenti \cite[Theorem
4.3]{Bre95} proved that the matrix $A$ and the Toeplitz matrix of
the each row sequence of $A$ are $(\textbf{x,y,z})$-totally
positive.

Motivated by these, in this paper, we introduce a generalized
lattice path as follows:
\begin{definition} \label{def+path}
Fix integers $t,\ell\in \mathbb{N}$ and let
$a^{(i)}_n(\textbf{x})$  and $ b_n(\textbf{y})$ be
 polynomials with nonnegative coefficients in $\textbf{x}$ and $\textbf{y}$, respectively.
Define $\mathscr{M}_{n,k}$ to be the set of lattice
paths in $\mathbb{N}^2$ from $(0,0)$ to $(k,n)$ with steps $(0,1)$
and $(1,t+i)$ for $0\leq i\leq \ell$, where each step $(0,1)$ from
height~$n-1$ gets the weight~$b_n(\textbf{y})$ and each step $(1,t+i)$ from
height~$n-t-i$ gets the weight $a_n^{(i)}(\textbf{x})$.
\end{definition}

For example, for $t=0$ and $\ell=2$, one can find a weighted lattice
path from $(0,0)$ to $(5,4)$ in Figure $5$.

\begin{center}
\setlength{\unitlength}{1.2cm}
\begin{picture}(12,6.1)(-6.3,-2)
\thicklines\put(-3,-1){\line(1,0){6}}
\thicklines\put(-3,-1){\line(0,1){5}}

\thicklines\put(-3,-1){\line(1,1){1}}\thicklines\put(-3.005,-0.975){\line(1,1){0.97}}\thicklines\put(-3.007,-0.965){\line(1,1){0.97}}
\thicklines\put(-2,0){\line(1,0){1}}\thicklines\put(-2,0.025){\line(1,0){1}}\thicklines\put(-2,0.015){\line(1,0){1}}
\thicklines\put(1,2){\line(0,1){1}}\thicklines\put(1.025,2.025){\line(0,1){1}}\thicklines\put(1.015,2.015){\line(0,1){1}}
\thicklines\put(1,3){\line(1,0){1}}\thicklines\put(1,3.025){\line(1,0){1}}\thicklines\put(1,3.015){\line(1,0){1}}
\thicklines\put(0,2){\line(1,0){1}}\thicklines\put(0,2.025){\line(1,0){1}}\thicklines\put(0,2.015){\line(1,0){1}}
\thicklines\put(-1,0){\line(1,2){1}}\thicklines\put(-1.009,0.035){\line(1,2){1}}\thicklines\put(-1.019,0.020){\line(1,2){1}}

\put(-2,-1){\circle*{0.12}}\put(-1,-1){\circle*{0.12}}\put(0,-1){\circle*{0.12}}\put(1,-1){\circle*{0.12}}
\put(2,-1){\circle*{0.12}}

\put(-3,-1){\circle*{0.12}}\put(-3,0){\circle*{0.12}}\put(-3,1){\circle*{0.12}}\put(-3,2){\circle*{0.12}}\put(-3,3){\circle*{0.12}}

\put(-2,0){\circle*{0.12}}\put(-1,0.01){\circle*{0.12}}\put(1.01,3){\circle*{0.12}}
\put(0.015,2.01){\circle*{0.12}}\put(1.01,2.01){\circle*{0.12}}\put(2,3.01){\circle*{0.12}}
\scriptsize
\thicklines\put(2.63,-1.05){\textbf{$\longrightarrow$}}\thicklines\put(-3.06,3.85){\textbf{$\uparrow$}}
\put(-3.2,3.9){$y$}\put(2.9,-1.2){$x$}
\put(-3.65,-1.2){$(0,$}\put(-3.35,-1.2){$0)$}
\put(-3.65,-0.05){$(0,$}\put(-3.35,-0.05){$1)$}\put(-3.65,0.95){$(0,$}\put(-3.35,0.95){$2)$}
\put(-3.65,1.95){$(0,$}\put(-3.35,1.95){$3)$}\put(-3.65,2.95){$(0,$}\put(-3.35,2.95){$4)$}
\put(-2.26,-1.3){$(1,$}\put(-1.93,-1.3){$0)$}\put(-1.26,-1.3){$(2,$}\put(-0.96,-1.3){$0)$}
\put(-0.26,-1.3){$(3,$}\put(0.07,-1.3){$0)$}\put(0.74,-1.3){$(4,$}\put(1.07,-1.3){$0)$}
\put(1.74,-1.3){$(5,$}\put(2.07,-1.3){$0)$}
{\color{blue}\put(-2.7,-0.3){$a_1^{(1)}$}}
{\color{blue}\put(-1.65,0.15){$a_1^{(0)}$}}
{\color{blue}\put(-0.99,1.15){$a_3^{(2)}$}}
{\color{blue}\put(0.1,2.15){$a_3^{(0)}$}}
{\color{blue}\put(1.05,3.15){$a_4^{(0)}$}}
{\color{blue}\put(0.76,2.5){$b_4$}}
\put(1.8,3){$(5,$}\put(2.13,3){$4)$}
\normalsize
\put(-6,-2){Figure~5: A weighted
lattice path from $(0,0)$ to $(5,4)$ for $t=0,\ell=2$.}
\end{picture}
\end{center}

The \emph{weight} of a lattice path is the product of the weights of
its steps.
\begin{definition} \label{def+Matrix}
Define a matrix $M=[M_{n,k}]_{n,k}$, where $M_{n,k}$ denotes the sum
of the weights of all lattice paths in $\mathscr{M}_{n,k}$.
\end{definition}

One will see that the Pascal triangle, the Pascal square, the
Delannoy triangle, the Delannoy square and Brenti's recursive
matrices can be viewed as the special cases of the matrix $M$. In
addition, $M$ is also a generalization of more other famous
combinatorial matrices, such as the signless Stirling triangle of
the first kind, the Jacobi-Stirling triangle of the first kind, and
so on.

The main objective of this paper is to study total positivity from
the matrix $M$. We will extend positivity properties in literature
for the Pascal triangle, the Delannoy triangle and Brenti's
recursive matrices to those of $M$ and present some criteria for
total positivity from $M$. Applying our results, we will deal with
total
positivity of many combinatorial matrices in a unified manner. 

\subsection{Background and definitions for total positivity}
A matrix of real numbers is {\it \textbf{totally positive}} if all
its minors are nonnegative. Total positivity of matrices has
received considerable attention in recent times. This is mainly due
to its occurrence in several branches of mathematics, such as
classical analysis \cite{Sc30}, representation theory
\cite{Lus94,Lusztig_98,Lusztig_08,Rie03}, network analysis
\cite{Pos06}, cluster algebras \cite{BFZ96,FZ99}, positive
Grassmannians and integrable systems \cite{KW11,KW14}. In
combinatorics, more and more combinatorial matrices have been proved
to be totally positive, such as recursive matrices
\cite{Bre95,CLW15}, Riordan arrays \cite{CLW152,CW19,MMW22,Zhu21},
the Jacobi-Stirling triangle \cite{Mon12}, Delannoy-like triangles
\cite{MZ16}, Catalan-Stieltjes matrices \cite{PZ16}, Narayana
triangles of types $A$ and $B$ \cite{WY18}, and the generalized
Jacobi-Stirling triangle \cite{Zhu14}. We also refer the reader to
monographs \cite{Kar68,Pin10} for more details.

For a sequence $\textbf{a}=(a_k)_{k\ge 0}$ of real numbers, denote
by $\mathcal {T}(\textbf{a})=[a_{i-j}]_{i,j\ge 0}$ its Toeplitz
matrix. The Toeplitz-total positivity plays an important role in
different fields. For instance, total positivity of $\mathcal
{T}(\textbf{a})$ is closely related to the P\'olya frequency of
$\textbf{a}$. The sequence $\textbf{a}$ is called a {\it
\textbf{P\'olya frequency}} sequence if its infinite Toeplitz matrix
$\mathcal {T}(\textbf{a})$ is totally positive. In classical
analysis, one of important applications of the P\'olya frequency is
to characterize real rootedness of polynomials and entire functions.
For example, the fundamental representation theorem for P\'olya
frequency sequences states for $a_0=1$ that $\textbf{a}$  is a
P\'olya frequency sequence if and only if its generating function
has the form
$$\sum_{n\ge 0}a_nz^n
=\frac{\prod_{j\ge 1}(1+\alpha_jz)}{\prod_{j\ge
1}(1-\beta_jz)}e^{\gamma z}$$ in some open disk centered at the
origin, where $\alpha_j,\beta_j,\gamma\ge 0$ and $\sum_{j\ge
1}(\alpha_j+\beta_j)<+\infty$ (see Karlin~\cite[pp.~412]{Kar68} for
instance). In addition, from the positivity of minors of order $2$,
we immediately see that the P\'olya frequency of $\textbf{a}$
implies its log-concavity defined by $a_n^2- a_{n+1}a_{n-1}\geq0$
for $n\geq1$. Define a new sequence
$\mathcal{L}(\textbf{a})=(b_k)_{k\ge 0}$ by $b_0=a_0^2$ and
$b_{k+1}=a^2_{k+1}-a_{k}a_{k+2}$ for $k\geq0$. Then the sequence
$\textbf{a}$ is log-concave if and only if the sequence
$\mathcal{L}(\textbf{a})$ is nonnegative, i.e., all $b_k$ are
nonnegative. Generally, the sequence $\textbf{a}$ is called {\it
$r$-log-concave} if $\mathcal {L}^{k}(\textbf{a})$ is a nonnegative
sequence for all $1\leq k\leq r$. Call $\textbf{a}$ {\it infinitely
log-concave} if $\mathcal{L}^i(\textbf{a})$ is nonnegative for all
$i\geq 1$, where $\mathcal{L}^i=\mathcal{L}(\mathcal{L}^{i-1})$. In
fact, Br\"{a}ndr\'{e}n \cite{Bra09} proved the stronger property:
the P\'olya frequency of a finite sequence $\textbf{a}$ implies its
infinite log-concavity. Log-concave and P\'olya frequency sequences
often occur in many branches of mathematics and have been
extensively investigated. We refer the reader to Adiprasito, Huh and
Katz \cite{AH18}, Br\"and\'en \cite{Bra15}, Br\"{a}nd\'{e}n and Huh
\cite{BH20}, Brenti~\cite{Bre89,Bre94}, Eur and Huh \cite{EH20}, Huh
\cite{Huh12,Huh15}, Stanley \cite{Sta89}, and Wang and Yeh
\cite{WYjcta05,WY07} for the log-concavity and P\'olya frequency in
combinatorics.

Because some combinatorial objects with respect to one or more
statistics often generate multivariate polynomials, our more
interest is to study sequences and matrices of polynomials in one or
more indeterminates $\textbf{x}$. Let $\mathbb{R}$ denote the set of
all real numbers and $\textbf{x}=\{x_i\}_{i\in{I}}$ be a set of
indeterminates. Following Brenti \cite{Bre95}, a matrix with entries
in $\mathbb{R}[\textbf{x}]$ is
 \textbf{$\textbf{x}$-totally positive} if all its minors are
polynomials with nonnegative coefficients in the indeterminates
$\textbf{x}$ and is \textbf{$\textbf{x}$-totally positive of order
$r$} if all its minors of order $k\le r$ are polynomials with
nonnegative coefficients in the indeterminates $\textbf{x}$.
Clearly, $\textbf{x}$-total positivity (resp. $\textbf{x}$-total
positivity of order $r$) reduces to total positivity (resp. total
positivity of order $r$) for $\textbf{x}$ being nonnegative real
numbers. In addition, for $\textbf{x}$ being nonnegative real
numbers, $\textbf{x}$-total positivity of the Toeplitz matrix
implies that the original sequence is a P\'olya frequency and
log-concave sequence.

\subsection{Statements of main results}\label{Statements of main results}
For convenience, we give some notation. Let
$(a^{(i)}_n(\textbf{x}))_{n,i \ge 0}$ be the set of polynomials with
nonnegative coefficients in $\textbf{x}$. Define an $l$-banded
infinitely lower triangle
$\mathcal{A}=\left[\mathcal{A}_{i,j}\right]_{i,j}$, where
\begin{eqnarray*}
          \mathcal{A}_{i,j}  =\begin{cases} a_i^{(i-j)}(\textbf{x})&\text{for}~i-l\leq j\leq i;\\
                                               0  &\text{others},
                                 \end{cases}
\end{eqnarray*}
or in other words,
$$\mathcal{A}=\left[\mathcal{A}_{i,j}\right]_{i,j}=\left(
                   \begin{array}{ccccccc}
                                 a_0^{(0)} \\
                                 a_1^{(1)}& a_1^{(0)} \\
                                 a_2^{(2)}& a_2^{(1)} & a_2^{(0)}\\
                                 a_3^{(3)}& a_3^{(2)} & a_3^{(1)} & a_3^{(0)} \\
                                 \vdots & \vdots & \vdots & \vdots & \ddots\\
                                 a_\ell^{(\ell)}& a_\ell^{(\ell-1)} & a_\ell^{(\ell-2)} & a_\ell^{(\ell-3)}&\cdots&a_\ell^{(0)} \\
                                & \ddots & \ddots & \ddots &\ddots & \cdots& \ddots\\
                  \end{array}
              \right).$$

Our main result is as follows.

\begin{thm}\label{thm+main}
Let $M$ be the matrix in Definition \ref{def+Matrix}. Then we have
  \begin{itemize}
       \item [\rm (i)]
if $\mathcal{A}$ is $\textbf{x}$-totally positive of order $r$, then
 $M$ is $(\textbf{x},\textbf{y})$-totally positive of order $r$;
         \item [\rm (ii)]
if $\mathcal{A}$ is $\textbf{x}$-totally positive, then $M$ is $(\textbf{x},\textbf{y})$-totally positive.
 \end{itemize}
\end{thm}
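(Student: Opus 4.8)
The plan is to prove the matrix factorization $M = \mathcal{A} \cdot M'$ (or a closely related one), where $M'$ is a matrix of the same type but with parameters shifted, and then bootstrap by induction. More precisely, I would first set up the recurrence satisfied by $M_{n,k}$. By splitting each lattice path counted by $M_{n,k}$ according to the last step that reaches the terminal column $x=k$, one obtains, for $n+k \geq 1$,
\begin{equation*}
M_{n,k} = b_n(\textbf{y})\, M_{n-1,k} + \sum_{i=0}^{\ell} a_n^{(i)}(\textbf{x})\, M_{n-t-i,\,k-1},
\end{equation*}
with $M_{0,0}=1$ and $M_{n,k}=0$ if $n<0$ or $k<0$. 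The key observation is that this recurrence expresses the $k$-th column of $M$ in terms of the $(k-1)$-st column of $M$ via the $\ell$-banded lower-triangular matrix $\mathcal{A}$ together with a bidiagonal matrix recording the weights $b_n(\textbf{y})$. Writing $B = B(\textbf{y})$ for the lower-bidiagonal matrix with $1$'s on the diagonal and $-b_n(\textbf{y})$ on the subdiagonal, the recurrence becomes $B\, M^{\langle k\rangle} = \mathcal{A}\, M^{\langle k-1\rangle}$ after the appropriate row shift by $t$; equivalently $M = B^{-1}\,\mathcal{A}\,(\text{shift})\,M$, which unwinds to an explicit product expansion of $M$ as a product of matrices each of which is either $B^{-1}$, a power of a shift operator, or (a shifted copy of) $\mathcal{A}$.

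The second ingredient is the standard fact (already invoked in the excerpt for the Pascal square) that total positivity is preserved under matrix products: if $P$ is $\textbf{x}$-totally positive of order $r$ and $Q$ is $(\textbf{x},\textbf{y})$-totally positive of order $r$, then so is $PQ$, by the Cauchy–Binet formula, since every minor of $PQ$ is a sum of products of a minor of $P$ with a minor of $Q$ and the indeterminates $\textbf{x},\textbf{y}$ are disjoint. I would also note that $B^{-1}$ is $\textbf{y}$-totally positive: it is lower-triangular with entries that are monomials (products of consecutive $b_j$'s) with nonnegative coefficients, and more conceptually $B^{-1}$ is itself the path matrix for the sub-family of paths using only vertical steps, hence TP by Lindström–Gessel–Viennot, or simply because a one-banded bidiagonal matrix with nonnegative entries is TP and products/limits preserve this. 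The shift operators are trivially TP of every order. Hypothesis (i)/(ii) gives the $\textbf{x}$-total positivity of $\mathcal{A}$ of order $r$ (resp. all orders). Assembling these via Cauchy–Binet through the product expansion yields that every finite truncation of $M$, hence $M$ itself, is $(\textbf{x},\textbf{y})$-totally positive of order $r$ (resp. totally positive).

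The main obstacle I anticipate is making the infinite product expansion rigorous and, in particular, controlling the bookkeeping of the shift operators so that the factorization is an honest identity of infinite matrices in which each factor is individually (order-$r$) totally positive: the shift by $t$ rows combined with the one-column advance means the naive product $B^{-1}\mathcal{A}$ does not literally equal the ``column-step'' operator unless one is careful about how the band of $\mathcal{A}$ interacts with the shift, and when $t=0$ one must check that the recurrence can still be solved for $M^{\langle k\rangle}$ in terms of $M^{\langle k-1\rangle}$ (this is where $a_n^{(0)}$ sitting on the diagonal of $\mathcal{A}$, rather than below it, matters — it should be absorbed into the column operator, not into $B$). I would handle this by working throughout with the $N\times N$ leading principal truncations, where all matrices are finite and the algebra is unambiguous, proving the order-$r$ total positivity there by Cauchy–Binet, and then letting $N\to\infty$; since total positivity of order $r$ is a closed condition stated minor-by-minor, and each fixed minor of $M$ already appears in a large enough truncation, the passage to the limit is immediate. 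Part (ii) is then just part (i) for all $r$ simultaneously.
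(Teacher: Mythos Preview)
Your proposal is correct and follows essentially the same approach as the paper: factor the column-advance operator as $B^{-1}$ times (a shifted copy of) $\mathcal{A}$, then induct using Cauchy--Binet. The paper carries out exactly the bookkeeping you anticipate by first iterating the basic recurrence to eliminate the $b_n M_{n-1,k}$ term (their Proposition~2.2), packaging the resulting column-to-column map as an explicit matrix $P$ with the factorization $P=\bigl(\prod_{i\ge 0}E_{i+1,i}[b_{i+1}]\bigr)\,(1,\mathcal{A})$, and then splitting into the two cases you flagged: induction on the column index $k$ when $t=0$, and induction on the row index $n$ via leading principal submatrices when $t\ge 1$.
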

We shall present an algebraic proof for Theorem \ref{thm+main} in
Section \ref{section+proof+thm+main}. In general, the matrix
$\mathcal{A}$ is not totally positive. In what follows we will
discuss total positivity of $\mathcal{A}$ under some restricted
conditions.

If $\ell=1$, then $\mathcal{A}$ reduces to a bidiagonal matrix and
is obviously $\textbf{x}$-totally positive. In this situation, we
establish the following consequence, which contains two results on
total positivity: one for the matrix $M$ and the other for the
Toeplitz matrix of the each row sequence of $M$.

\begin{thm}\label{thm+bidiag}
Let $M$ be the matrix in Definition \ref{def+Matrix}. If $\ell=1$,
then
\begin{itemize}
\item [\rm (i)]
the matrix $M$ is $(\textbf{x},\textbf{y})$-totally positive;
\item [\rm (ii)]
the Toeplitz matrix of the each row sequence of $M$ with $t\geq 1$ is
$(\textbf{x},\textbf{y})$-totally positive.
\end{itemize}
\end{thm}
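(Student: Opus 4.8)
The plan is to deduce part (i) directly from Theorem \ref{thm+main}. When $\ell=1$ the banded matrix $\mathcal{A}$ has only two nonzero diagonals, namely the main diagonal carrying the entries $a_i^{(0)}(\textbf{x})$ and the subdiagonal carrying $a_i^{(1)}(\textbf{x})$. Such a bidiagonal matrix with nonnegative entries is $\textbf{x}$-totally positive: any square submatrix indexed by rows $i_1<\cdots<i_k$ and columns $j_1<\cdots<j_k$ is itself (block) lower triangular with at most two nonzero diagonals, and a routine expansion (or the classical fact that products of nonnegative bidiagonal matrices are TP) shows every minor is a monomial or a sum of monomials in the $a_i^{(s)}$, hence a polynomial with nonnegative coefficients in $\textbf{x}$. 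Applying Theorem \ref{thm+main}(ii) then gives that $M$ is $(\textbf{x},\textbf{y})$-totally positive, which is (i).

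For part (ii), the idea is to realize the Toeplitz matrix $\mathcal{T}=[M_{n,\,k-(i-j)}]$ of a fixed row $n$ of $M$ — or more precisely, the relevant Toeplitz array built from the row sequence — as a path matrix to which the Lindström–Gessel–Viennot lemma applies. First I would set up a family of source vertices $A_1,A_2,\dots$ and sink vertices $B_1,B_2,\dots$ in the lattice so that the number of weighted paths from $A_j$ to $B_i$ equals the $(i,j)$ entry of the Toeplitz matrix; because the recurrence for $M_{n,k}$ with $\ell=1$ has step set $(0,1)$ and $(1,t),(1,t+1)$, and because $t\ge1$ forces each horizontal step to strictly increase the $x$-coordinate while raising the height by at least $t\ge1$, the $A_j$ and $B_i$ can be placed along appropriately shifted diagonal lines. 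The key structural input is that $t\ge1$ makes the underlying directed graph planar with the sources and sinks in the correct cyclic (non-crossing) order, so that the LGV lemma identifies each minor of $\mathcal{T}$ with a signed sum over families of vertex-disjoint path systems, and the sign is forced to be $+$. Since every individual path carries a weight that is a monomial in $\textbf{x},\textbf{y}$ with coefficient $1$, each such minor is a polynomial with nonnegative coefficients, giving $(\textbf{x},\textbf{y})$-total positivity of the Toeplitz matrix.

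The main obstacle I anticipate is the careful construction of the source/sink configuration witnessing the Toeplitz structure together with the verification that, for $t\ge1$, every non-crossing family of paths is automatically vertex-disjoint (and conversely that crossing families cancel), so that the LGV sign is uniformly positive; this is exactly the point where the hypothesis $t\ge1$ is used, and it is the step most easily gotten subtly wrong. Once the planar network is correctly laid out, the rest is a direct appeal to the LGV lemma as in Brenti's treatment of recursive matrices, with the multivariate refinement handled exactly as in the proof of Theorem \ref{thm+main}, since path weights are manifestly monomials with nonnegative (indeed unit) coefficients in $\textbf{x}$ and $\textbf{y}$.
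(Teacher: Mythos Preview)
Your proposal is correct in substance and aligns with the paper's approach. For part (i) you do exactly what the paper does: observe that for $\ell=1$ the matrix $\mathcal{A}$ is bidiagonal and hence trivially $\textbf{x}$-totally positive, and then invoke Theorem~\ref{thm+main}(ii). For part (ii) you correctly identify that the Lindstr\"om--Gessel--Viennot lemma is the tool and that the hypothesis $t\ge 1$ is what guarantees the nonpermutability of sources and sinks. The organizational difference is that the paper does not build a bespoke network for the $\ell=1$ case; instead it proves the more general Theorem~\ref{thm+tridiag} (the $\ell=2$ case under a factorization hypothesis on the $a_n^{(i)}$) via an explicit recursive digraph construction $\Gamma_n\to\Gamma^{\ast}_n\to\Gamma^{\diamond}_k$, and then recovers Theorem~\ref{thm+bidiag} by the specialization $\alpha_n=a_n^{(0)}$, $\lambda_n=1$, $\mu_n=0$, $\beta_n=a_n^{(1)}$. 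The ``careful construction of the source/sink configuration'' that you flag as the main obstacle is precisely the content of Section~\ref{section+proof+thm+tirdiag}; your plan would succeed once that construction is supplied, and the paper's detour through $\ell=2$ buys a uniform proof of both theorems at the cost of carrying slightly more notation.
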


If $\ell=2$, then $\mathcal{A}$ reduces to a tridiagonal matrix. In
general, a tridiagonal matrix is not totally positive. But we have
the following sufficient conditions.

\begin{thm}\label{thm+tridiag}
 Let $M$ be the matrix in Definition \ref{def+Matrix} with $\ell=2$.
If there exists polynomial sequences with
 nonnegative coefficients $(\alpha_n(\textbf{x}))_n$, $(\beta_n(\textbf{x}))_n$,
$(\lambda_n(\textbf{x}))_n$ and $(\mu_n(\textbf{x}))_n$ such that
\begin{itemize}
\item [\rm (i)]
   $a_n^{(0)}=\alpha_n\lambda_n$ for $n\geq0$;
\item [\rm (ii)]
   $a_n^{(1)}=\alpha_n\mu_n+\beta_n\lambda_{n-1}$ for $n\geq1$;
\item [\rm (iii)]
   $a_n^{(2)}=\beta_n\mu_{n-1}$ for $n\geq2$,
\end{itemize}
then we have
\begin{itemize}
\item [\rm (i)]
 the matrix $M$ is $(\textbf{x},\textbf{y})$-totally positive;
\item [\rm (ii)]
  the Toeplitz matrix of the each row  sequence of $M$ with $t\geq 1$ is
$(\textbf{x},\textbf{y})$-totally positive.
\end{itemize}
\end{thm}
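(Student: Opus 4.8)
The plan is to reduce the $\ell=2$ case to the hypothesis of Theorem \ref{thm+main}, i.e. to show that under conditions (i)--(iii) the tridiagonal matrix $\mathcal{A}$ is $\textbf{x}$-totally positive, and then to upgrade this to the Toeplitz statement (ii) by a Lindstr\"{o}m--Gessel--Viennot argument. For part (i), the key observation is that conditions (i)--(iii) are precisely a bidiagonal factorization of $\mathcal{A}$: if we set
\begin{equation*}
L=\begin{pmatrix}
\alpha_0 & & & \\
\beta_1 & \alpha_1 & & \\
 & \beta_2 & \alpha_2 & \\
 & & \ddots & \ddots
\end{pmatrix},\qquad
U=\begin{pmatrix}
\lambda_0 & \mu_0 & & \\
 & \lambda_1 & \mu_1 & \\
 & & \lambda_2 & \ddots \\
 & & & \ddots
\end{pmatrix},
\end{equation*}
then $(LU)_{i,i}=\alpha_i\lambda_i=a_i^{(0)}$, $(LU)_{i,i-1}=\beta_i\lambda_{i-1}+\alpha_i\mu_{i-1}$... wait, indices: $(LU)_{i,i-1}$ should match $a_i^{(1)}=\mathcal{A}_{i,i-1}$, and indeed $(LU)_{i,i-1}=\alpha_i\mu_{i-1}+\beta_i\lambda_{i-1}$, which is condition (ii) up to the index shift in $\mu$; and $(LU)_{i,i-2}=\beta_i\mu_{i-2}=a_i^{(2)}$ by condition (iii). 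So $\mathcal{A}=LU$ with $L$ lower bidiagonal and $U$ upper bidiagonal, both having nonnegative polynomial entries. Each of $L$ and $U$ is therefore $\textbf{x}$-totally positive (a bidiagonal matrix with nonnegative entries has all minors equal to products of entries or zero, by the standard argument / Lindstr\"{o}m--Gessel--Viennot on a trivial network), and since the product of two $\textbf{x}$-totally positive matrices is $\textbf{x}$-totally positive (Cauchy--Binet, which preserves nonnegativity of coefficients), $\mathcal{A}=LU$ is $\textbf{x}$-totally positive. Theorem \ref{thm+main}(ii) then immediately gives that $M$ is $(\textbf{x},\textbf{y})$-totally positive, proving (i).

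For part (ii), the Toeplitz statement, I would construct an explicit planar network / system of lattice paths realizing $M$ whose edge weights are nonnegative polynomials in $\textbf{x},\textbf{y}$, and then apply the Lindstr\"{o}m--Gessel--Viennot lemma to a family of sources and sinks arranged so that the path matrix is exactly the Toeplitz matrix $[M_{m,k-j}]$ of a fixed row. Concretely, because $\ell=2$ and the factorization $\mathcal{A}=LU$ holds, one can refine the one-step moves $(1,t+i)$ of the original lattice paths of Definition \ref{def+path} into two sub-steps governed by the $\alpha,\beta$ weights and the $\lambda,\mu$ weights, producing a network in which the transfer matrix for ``advance one column'' is $\mathcal{A}$ composed with the $b_n$-weighted vertical moves. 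Stacking $k$ copies of this column-network and choosing the $i$-th source at the appropriate height (shifted by $t$, which is where the hypothesis $t\geq1$ enters: it guarantees the sources can be placed so that a path from source $i$ to sink $j$ with $i<j$ is forced to be noncrossing-incompatible, making the relevant sub-minors collapse correctly) yields a nonintersecting path system whose generating function is the required Toeplitz minor. The LGV lemma then expresses every minor of the Toeplitz matrix as a sum over nonintersecting path families, each contributing a monomial with nonnegative coefficient, giving $(\textbf{x},\textbf{y})$-total positivity.

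The main obstacle, and the step I would spend the most care on, is the second one: setting up the planar network for the \emph{Toeplitz} matrix rather than for $M$ itself. For $M$ the network is essentially read off from Definition \ref{def+path}, but to get the Toeplitz matrix of a row one must arrange infinitely many sources and sinks along a line and verify that the LGV hypotheses (that every path system connecting source set to sink set in the ``wrong'' order must intersect) genuinely hold — this is exactly the role of the condition $t\geq 1$, and the bookkeeping of heights modulo the step sizes $t,t+1,t+2$ is delicate. I expect the bidiagonal factorization in part (i) to be essentially a one-line verification once the matrices $L,U$ are written down, and the transition from "$M$ is TP" to "$M$ is TP via an explicit network" to be routine; it is the geometry of the source/sink placement for the Toeplitz array, together with checking that the refined network correctly reproduces the weights $a_n^{(i)}$ through $\mathcal{A}=LU$, that constitutes the real work. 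I would also need to confirm that passing from a single-row Toeplitz matrix to the statement for \emph{every} row is uniform in the row index, which it should be since the network is homogeneous in the column direction.
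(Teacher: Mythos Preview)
Your overall strategy matches the paper's: part (i) follows from Theorem~\ref{thm+main} once $\mathcal{A}$ is shown to be $\textbf{x}$-totally positive via a bidiagonal factorization, and part (ii) is handled by an explicit Lindstr\"om--Gessel--Viennot construction. However, the factorization $\mathcal{A}=LU$ you wrote down is wrong. With $L$ lower bidiagonal and $U$ \emph{upper} bidiagonal, the product $LU$ is symmetric-tridiagonal in shape (nonzero in positions $(i,i-1),(i,i),(i,i+1)$), whereas $\mathcal{A}$ is \emph{lower} tridiagonal (nonzero in $(i,i-2),(i,i-1),(i,i)$). Concretely, with your $L,U$ one gets $(LU)_{i,i}=\alpha_i\lambda_i+\beta_i\mu_{i-1}$ and $(LU)_{i,i-2}=0$, neither of which matches $\mathcal{A}$; your displayed formula for $(LU)_{i,i-1}$ is also incorrect. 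The fix is easy: take \emph{both} factors lower bidiagonal,
\[
B=\begin{pmatrix}\alpha_0&&&\\ \beta_1&\alpha_1&&\\ &\beta_2&\alpha_2&\\ &&\ddots&\ddots\end{pmatrix},\qquad
C=\begin{pmatrix}\lambda_0&&&\\ \mu_1&\lambda_1&&\\ &\mu_2&\lambda_2&\\ &&\ddots&\ddots\end{pmatrix},
\]
and then $(BC)_{i,i}=\alpha_i\lambda_i$, $(BC)_{i,i-1}=\alpha_i\mu_i+\beta_i\lambda_{i-1}$, $(BC)_{i,i-2}=\beta_i\mu_{i-1}$, which are exactly conditions (i)--(iii). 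With this correction your argument for (i) goes through, and this is how the paper dispatches (i) as well (it simply says ``immediate from Theorem~\ref{thm+main}(ii)'' without writing the factorization out).

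For (ii) the paper does precisely what you sketch: it builds an explicit planar digraph $\Gamma_n^\ast$ whose walk matrix is $M_n$, with the $\alpha,\beta$ weights on one column of edges and the $\lambda,\mu$ weights on the next (this is the network incarnation of $\mathcal{A}=BC$), and then passes to a subgraph $\Gamma_k^\diamond$ with staggered sources $Q_i^{(n+i)}$ and sinks $Q_{n+j}^{(0)}$ so that its walk matrix is the $k$th leading principal submatrix of the Toeplitz matrix of row $n$. The hypothesis $t\ge 1$ is used exactly where you anticipate, to guarantee that the source/sink pair is fully nonpermutable. So once the factorization in (i) is corrected, your plan is essentially the paper's.
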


Note that conclusion (i) of Theorem \ref{thm+tridiag} is immediate
from Theorem \ref{thm+main} (ii). In order to complete the proof of
Theorem \ref{thm+tridiag} (ii), in Section
\ref{section+proof+thm+tirdiag}, we will present a combinatorial
proof for (i) and (ii) using the Lindstr\"{o}m-Gessel-Viennot lemma,
which expresses the sum of the signed weight of non-intersecting
paths in a directed acyclic planar graph in terms of a determinant.

Taking $\alpha_n=a_n^{(0)},\lambda_n=1$ for $n\geq 0$, $\mu_n=0$ and
$\beta_n=a_n^{(1)}$ for $n\geq 1$ in Theorem \ref{thm+tridiag}, we
immediately obtain Theorem \ref{thm+bidiag}.

By Theorem \ref{thm+tridiag}, we also obtain:

\begin{cor}\label{coro+1.7}
Let $M$ be the matrix in Definition \ref{def+Matrix} with $\ell=2$.
Then both the matrix $M$ and the Toeplitz matrix of the each row sequence of $M$ with $t\geq 1$ are
$(\textbf{x},\textbf{y})$-totally positive under any of the following four conditions:
\begin{itemize}
\item [\rm (i)]
   $a_1^{(1)}=a_1^{(0)}$, $a_n^{(1)}=a_n^{(0)}+a_n^{(2)}$ for $n\geq 2$;
\item [\rm (ii)]
   $a_1^{(1)}=a_2^{(2)}$, $a_n^{(1)}=a_{n-1}^{(0)}+a_{n+1}^{(2)}$ for $n\geq 2$;
\item [\rm (iii)]
   $a_1^{(1)}=1$, $a_n^{(1)}=a_{n-1}^{(0)}a_{n}^{(2)}+1$ for $n\geq 2$;
\item [\rm (iv)]
   $a_1^{(1)}=a_{1}^{(0)}a_{2}^{(2)}$, $a_n^{(1)}=a_{n}^{(0)}a_{n+1}^{(2)}+1$ for $n\geq 2$.
\end{itemize}
\end{cor}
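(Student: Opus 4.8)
I would derive Corollary~\ref{coro+1.7} directly from Theorem~\ref{thm+tridiag} by exhibiting, in each of the four cases, explicit polynomial sequences with nonnegative coefficients $(\alpha_n(\textbf{x}))_n$, $(\beta_n(\textbf{x}))_n$, $(\lambda_n(\textbf{x}))_n$ and $(\mu_n(\textbf{x}))_n$ that verify hypotheses (i)--(iii) of that theorem; the two conclusions of the corollary then follow at once from the two conclusions of Theorem~\ref{thm+tridiag}. In every case the recipe is the same: one of $\alpha_n,\lambda_n$ is taken to be the constant polynomial~$1$ and the other is taken to be $a_n^{(0)}$, so that condition (i), $a_n^{(0)}=\alpha_n\lambda_n$, holds automatically; the sequences $\beta_n$ for $n\ge 2$ and $\mu_n$ for $n\ge 1$ are chosen so that the $n\ge 2$ instance of condition (ii), namely $a_n^{(1)}=\alpha_n\mu_n+\beta_n\lambda_{n-1}$, becomes precisely the prescribed recurrence for $a_n^{(1)}$; and $\beta_1$ is set to $0$, which is what makes the $n=1$ instance of (ii) collapse to the prescribed value of $a_1^{(1)}$. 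All the sequences so produced have nonnegative coefficients because the $a_n^{(i)}$ do, and any initial value left unconstrained (e.g.\ $\lambda_0$ or $\mu_0$) may be taken to be an arbitrary nonnegative polynomial.

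Concretely, I would take: in case (i), $\alpha_n=a_n^{(0)}$, $\lambda_n\equiv 1$, $\mu_n\equiv 1$, $\beta_1=0$ and $\beta_n=a_n^{(2)}$ for $n\ge 2$; in case (iii), $\alpha_n\equiv 1$, $\lambda_n=a_n^{(0)}$, $\mu_n\equiv 1$, $\beta_1=0$ and $\beta_n=a_n^{(2)}$ for $n\ge 2$; in case (ii), $\alpha_n\equiv 1$, $\lambda_n=a_n^{(0)}$, $\mu_n=a_{n+1}^{(2)}$ for $n\ge 1$, $\beta_1=0$ and $\beta_n\equiv 1$ for $n\ge 2$; and in case (iv), $\alpha_n=a_n^{(0)}$, $\lambda_n\equiv 1$, $\mu_n=a_{n+1}^{(2)}$ for $n\ge 1$, $\beta_1=0$ and $\beta_n\equiv 1$ for $n\ge 2$. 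For each case one then checks by direct substitution that (i) holds for $n\ge 0$; that (iii), i.e.\ $a_n^{(2)}=\beta_n\mu_{n-1}$ for $n\ge 2$, holds (using $\mu_{n-1}=1$ in cases (i), (iii) and $\mu_{n-1}=a_n^{(2)}$ in cases (ii), (iv)); that (ii) for $n\ge 2$ is exactly the hypothesis of the case rewritten; and that (ii) for $n=1$ holds because the summand multiplied by $\beta_1$ vanishes and the surviving summand $\alpha_1\mu_1$ equals the prescribed $a_1^{(1)}$ in each case.

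With these factorizations in hand, Theorem~\ref{thm+tridiag} yields immediately that $M$ is $(\textbf{x},\textbf{y})$-totally positive and that the Toeplitz matrix of each row sequence of $M$ with $t\ge 1$ is $(\textbf{x},\textbf{y})$-totally positive, which is the assertion of the corollary. I anticipate no genuine obstacle here: the entire argument reduces to producing the correct factorization, and the only point demanding a little care is the index $n=1$, where a naive choice of $\beta_1$ would introduce a spurious extra summand into condition (ii); setting $\beta_1=0$ removes it, and this is harmless because condition (iii) constrains $\beta_n$ only for $n\ge 2$.
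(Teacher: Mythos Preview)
Your proof is correct and follows exactly the same approach as the paper: you apply Theorem~\ref{thm+tridiag} with the very same choices of $\alpha_n,\beta_n,\lambda_n,\mu_n$ in each of the four cases, including the device of setting $\beta_1=0$ to handle the $n=1$ instance of condition~(ii). Your write-up is, if anything, a bit more explicit than the paper's about why the verifications go through.
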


\textbf{Proof of  Corollary \ref{coro+1.7}:}
\begin{itemize}
  \item [\rm (i)]
  The desired result under the condition (i) follows from Theorem \ref{thm+tridiag} by taking
  $\alpha_n=a_n^{(0)},\lambda_n=1$ for $n\geq 0$, $\mu_n=1$ for $n\geq 1$,
  $\beta_1=0,\beta_n=a_n^{(2)}$ for $n\geq 2$.
  \item [\rm (ii)]
  The desired result under the condition (ii) follows from Theorem \ref{thm+tridiag} by taking
  $\alpha_n=1,\lambda_n=a_n^{(0)}$ for $n\geq 0$, $\mu_n=a_{n+1}^{(2)}$ for $n\geq 1$,
  $\beta_1=0,\beta_n=1$ for $n\geq 2$.
  \item [\rm (iii)]
  The desired result under the condition (iii) follows from Theorem \ref{thm+tridiag} by taking
  $\alpha_n=1,\lambda_n=a_n^{(0)}$ for $n\geq 0$, $\mu_n=1$ for $n\geq 1$,
  $\beta_1=0,\beta_n=a_n^{(2)}$ for $n\geq 2$.
  \item [\rm (iv)]
  The desired result under the condition (iv) follows from Theorem \ref{thm+tridiag} by taking
  $\alpha_n=a_n^{(0)},\lambda_n=1$ for $n\geq 0$, $\mu_n=a_{n+1}^{(2)}$ for $n\geq 1$,
  $\beta_1=0,\beta_n=1$ for $n\geq 2$.
\end{itemize}
This completes the proof of Corollary \ref{coro+1.7}.\qed

The Riordan array is an important tool in combinatorics and has many
 applications to a wide range of subjects, such as enumerative
combinatorics, combinatorial sums, recurrence relations, computer
science, and among other topics (see
\cite{CLW152,HS16,HS09,SGWW91,Spr94} for instance). Recall the
definition of the Riordan array. Let $f(z)=\sum_{n\geq0}f_nz^n$ and
$g(z)=\sum_{n\geq0}g_nz^n$ be two formal power series. A {\it
Riordan array}, denoted by $\mathcal
{R}\left(g(z),f(z)\right)=[R_{n,k}]_{n,k}$ is an infinite
matrix whose ordinary generating function of the $k$th column is
$g(z)f^k(z)$ for $k\geq0$. In other words, for $n,k\geq0$,
$$R_{n,k}=[z^n]g(z)f^k(z).$$
Then
\begin{eqnarray}\label{generating funtion+GR}
\sum_{n\geq0}\sum_{k\geq0}R_{n,k}q^k z^n=\frac{g(z)}{1-qf(z)}.
\end{eqnarray}
A Riordan array is called {\it proper} if $g_0f_1\neq0$ and $f_0=0$
and {\it improper} if $g_0f_0\neq0$. Generally, a proper Riordan
array is an infinite lower-triangular matrix and an improper Riordan
array is an infinite rectangular matrix. For instance, the Pascal triangle
is a proper Riordan array $\mathcal{R}\left(\frac{1}{1-z},\frac{z}{1-z}\right)$ and
the Pascal square is an improper Riordan array $\mathcal{R}\left(\frac{1}{1-z},\frac{1}{1-z}\right)$.
The Delannoy triangle is a proper Riordan array $\mathcal{R}\left(\frac{1}{1-z},\frac{z(1+z)}{1-z}\right)$
and the Delannoy square is an improper Riordan array $\mathcal{R}\left(\frac{1}{1-z},\frac{1+z}{1-z}\right)$.

Finally, for the matrix $\mathcal{A}$ with a general $\ell$, we
obtain the following result closely related to the Riordan array.

\begin{thm}\label{thm+fun+ell}
Let $M$ be the matrix in Definition \ref{def+Matrix} and
$(b_i(\textbf{y}))_{i\geq0}$ be a polynomial sequence with
nonnegative coefficients in $\textbf{y}$. If there exists
indeterminates $\balpha=(\alpha_i)_{i=1}^{\ell}$ and
$\bbeta=(\beta_i)_{i=1}^{\ell}$ such that the generating function
$$\sum_{i= 0}^{\ell}a^{(i)}_nz^i
=\prod_{j= 1}^{\ell}(\alpha_jz+\beta_j)$$ for all $n\in \mathbb{N}$,
then we have the following results:
  \begin{itemize}
\item [\rm (i)]
the matrix $M$ and the Toeplitz matrix of the each row sequence of $M$ for
$t\geq1$ are $(\balpha,\bbeta,\textbf{y})$-totally positive;
\item [\rm (ii)]
if $b_n=\gamma$ for $n\geq0$, then the Toeplitz matrix of the each
column sequence of $M$ is $(\balpha,\bbeta,\gamma)$-totally positive;
\item [\rm (iii)]
if $b_n=\gamma$ for $n\geq0$ and $t\geq1$, then the Toeplitz matrix
of the sequence $(M_{n+\delta i,k+\sigma i})_{i}$  is
$(\balpha,\bbeta,\gamma)$-totally positive for $0\leq k\leq n$,
$0<\delta$ and $max\{k,\delta\}<\sigma$;
\item [\rm (iv)]
if $b_n=\gamma$ for $n\geq0$, then the matrix $M$ is the Riordan array
 $\mathcal {R}\left(\frac{1}{1-\gamma z},\frac{\sum_{i=
0}^{\ell}a^{(i)}_nz^{t+i}}{1-\gamma z}\right)$;
\item [\rm (v)]
if $b_n=\gamma$ for $n\geq0$, then
$$
   M_{n,k}=
    \!\!\!
   \sum_{\begin{scarray}
            i\geq 0  \\[-1mm]
            0\leq c_1,c_2,\ldots,c_{\ell}\leq k  \\[-1mm]
            i+\sum\limits_{j=1}^{\ell}c_j=n-tk \\[-1mm]
         \end{scarray}
        }
   \!\!\!
   \binom{k+i}{i}{\gamma}^i\prod_{j= 1}^{\ell}\binom{k}{c_j}{\alpha_j}^{c_j}{\beta_j}^{k-c_j};$$
\item [\rm (vi)]
if $b_n=\gamma$ for $n\geq0$, then we have
$$\sum_{n\geq0}\sum_{k\geq0}M_{n,k}q^kz^n=\frac{1}{1-\gamma z-q\sum_{i= 0}^{\ell}a^{(i)}_nz^{t+i}}.$$
 \end{itemize}
\end{thm}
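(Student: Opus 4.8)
The plan is to treat all six parts as consequences of a single structural observation: when $b_n=\gamma$ is constant and $\sum_{i=0}^{\ell}a_n^{(i)}z^i=\prod_{j=1}^{\ell}(\alpha_j z+\beta_j)$ is independent of $n$, the defining recursion for $M$ collapses to a constant-coefficient recursion, and the weighted path count $M_{n,k}$ becomes a product of ``atomic'' path counts. Concretely, I would first derive the recurrence $M_{n,k}=\gamma M_{n-1,k}+\sum_{i=0}^{\ell}a^{(i)} M_{n-t-i,k-1}$ directly from Definition \ref{def+path} by conditioning on the last step of a path to $(k,n)$. Summing against $q^kz^n$ and solving the resulting functional equation gives part (vi): $\sum_{n,k}M_{n,k}q^kz^n=\bigl(1-\gamma z-q\sum_{i=0}^{\ell}a^{(i)}z^{t+i}\bigr)^{-1}$. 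Extracting the coefficient of $q^k$ from the geometric series $\sum_{k\ge0}q^k\bigl(\frac{\sum_i a^{(i)}z^{t+i}}{1-\gamma z}\bigr)^k$ shows that the $k$th column generating function is $\frac{1}{1-\gamma z}\bigl(\frac{\sum_i a^{(i)}z^{t+i}}{1-\gamma z}\bigr)^k$, which is precisely part (iv), the Riordan array statement $\mathcal{R}\!\left(\frac{1}{1-\gamma z},\frac{\sum_i a^{(i)}z^{t+i}}{1-\gamma z}\right)$.

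For part (v) I would expand that $k$th-column generating function explicitly. Writing $\sum_i a^{(i)}z^{t+i}=z^{tk}\prod_{j=1}^{\ell}(\alpha_j z+\beta_j)$ — wait, more carefully: the $k$th column is $z^{tk}\prod_{j=1}^\ell(\alpha_j z+\beta_j)^k\cdot(1-\gamma z)^{-(k+1)}$. Now use the binomial theorem on each factor $(\alpha_j z+\beta_j)^k=\sum_{c_j}\binom{k}{c_j}\alpha_j^{c_j}\beta_j^{k-c_j}z^{c_j}$ and $(1-\gamma z)^{-(k+1)}=\sum_{i\ge0}\binom{k+i}{i}\gamma^i z^i$, then collect the coefficient of $z^n$; the constraint $i+\sum_j c_j=n-tk$ falls out, giving exactly the stated formula for $M_{n,k}$. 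This part is purely bookkeeping once (iv) is in hand.

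The substantive parts are (i), (ii), (iii). For (i): the hypothesis forces the banded matrix $\mathcal{A}$ to have every row equal (up to the shift) to the coefficient vector of $\prod_j(\alpha_j z+\beta_j)$, so $\mathcal{A}$ is a Toeplitz-type banded matrix whose ``symbol'' factors completely. Such a matrix is a product of $\ell$ bidiagonal matrices $B_j$, the $j$th having $\beta_j$ on the diagonal and $\alpha_j$ on the subdiagonal; each $B_j$ is $(\alpha_j,\beta_j)$-totally positive, and products of totally positive matrices are totally positive, so $\mathcal{A}$ is $(\balpha,\bbeta)$-totally positive. Theorem \ref{thm+main}(ii) then yields that $M$ is $(\balpha,\bbeta,\textbf{y})$-totally positive. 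For the Toeplitz matrix of each row of $M$ with $t\ge1$, I would invoke the Lindström–Gessel–Viennot machinery set up in Section \ref{section+proof+thm+tirdiag}: build a planar acyclic network whose path counts realize $M_{n,k}$ (using the same network that proves Theorem \ref{thm+tridiag}(ii), but now with $\ell$ layers of diagonal steps instead of $2$), then the Toeplitz minor of a row sequence equals a signed sum over families of non-intersecting paths between suitably shifted source/sink sets, hence is a polynomial with nonnegative coefficients. For (ii), the Toeplitz matrix of a column of $M$: since $M$ is the Riordan array $\mathcal{R}(g,f)$ with $g=\frac{1}{1-\gamma z}$ and $f=\frac{\sum_i a^{(i)}z^{t+i}}{1-\gamma z}$, the $k$th column sequence has generating function $g f^k$; I would show this is a product of a geometrically-decaying factor and $k$ copies of $f$, each of whose Toeplitz matrices is totally positive (the Toeplitz matrix of $\frac{1}{1-\gamma z}$ is the all-$\gamma^{\,\text{power}}$ lower triangular matrix, visibly TP; the Toeplitz matrix of $z^t\prod_j(\alpha_j z+\beta_j)$ factors into bidiagonal Toeplitz matrices), and use that the Toeplitz matrix of a product of power series is the product of their Toeplitz matrices. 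Part (iii) is the ``generalized Lindqvist–Su–Wang/Yu'' statement: here I would again use LGV on the same network, observing that the sequence $(M_{n+\delta i,k+\sigma i})_i$ counts paths between sources and sinks that drift by $(\delta,\sigma)$ per index, and the condition $\max\{k,\delta\}<\sigma$ is exactly what guarantees the relevant path families can be made non-intersecting, so the Toeplitz minors are nonnegative-coefficient polynomials.

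The main obstacle I anticipate is part (iii): setting up the LGV network so that the shifted sequence $(M_{n+\delta i,k+\sigma i})_i$ is genuinely realized by vertex-disjoint path systems requires a careful choice of source and sink placement, and verifying the non-intersecting property precisely under the hypothesis $\max\{k,\delta\}<\sigma$ (rather than the cruder $\sigma>\delta>0$, $\sigma>k$) is the delicate combinatorial point. Parts (i) (Toeplitz half), (ii) will reuse much of the same network, so once (iii)'s network is correctly built the rest follows with modest additional effort.
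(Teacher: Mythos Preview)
Your proposal is correct and follows essentially the same approach as the paper. You have identified all the key ingredients: the factorization $\mathcal{A}=\prod_j W_j$ into bidiagonal Toeplitz matrices (giving total positivity of $M$ via Theorem~\ref{thm+main}), the column-generating-function computation yielding (iv), (v), (vi), the Toeplitz-of-a-product argument for (ii), and the LGV network construction (with $\ell$ layers of diagonal steps) for the row-Toeplitz half of (i) and for (iii); your assessment that (iii) is the delicate point, requiring careful placement of sources/sinks and verification of the nonpermutability condition under $\max\{k,\delta\}<\sigma$, matches the paper's own effort there.
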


The Riordan array result in Theorem \ref{thm+fun+ell} (iv) also
arose in \cite{RS15}. We shall arrange the proof of Theorem
\ref{thm+fun+ell} in Section \ref{section+Proof+Thm+full+ell}. It is
well-known that a finite sequence $s_0,s_1,\ldots,s_n$ is a P\'olya
frequency sequence if and only if $\sum_{k= 0}^ns_kz^k$ has only
non-positive real zeros (\cite[pp.~399]{Kar68}). Therefore, the
following consequence is immediate from (i), (ii) and (iii) of
Theorem \ref{thm+fun+ell}.

\begin{prop}\label{main+proppppp}
Let $b_n$ be a nonnegative constant.
If for any two different $n$, polynomials $\sum_{i=
0}^{\ell}a^{(i)}_nz^i$ have $\ell$ same non-positive real zeros,
then
\begin{itemize}
\item [\rm (i)]
the matrix $M$ in Definition \ref{def+Matrix} is totally positive;
\item [\rm (ii)]
each column of $M$ is a P\'olya frequency and infinitely log-concave
sequence;
\item [\rm (iii)]
each row of $M$ for $t\geq1$ is a P\'olya frequency and infinitely
log-concave sequence;
\item [\rm (iv)]
if $0\leq k\leq n$, $0<\delta$ and $max\{k,\delta\}<\sigma$, then
$(M_{n+\delta i,k+\sigma i})_{i}$  is a P\'olya frequency and
infinitely log-concave sequence for $t\geq1$.
\end{itemize}
\end{prop}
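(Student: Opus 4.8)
The plan is to derive Proposition~\ref{main+proppppp} as a direct corollary of Theorem~\ref{thm+fun+ell}, using the classical characterization of finite P\'olya frequency sequences via real-rootedness. First I would set up the hypothesis: if for any two distinct indices $n$ the polynomial $\sum_{i=0}^{\ell}a^{(i)}_n z^i$ has the same $\ell$ non-positive real zeros, say $-\beta_j/\alpha_j$ with $\alpha_j,\beta_j\ge 0$, then we may factor $\sum_{i=0}^{\ell}a^{(i)}_n z^i = c\prod_{j=1}^{\ell}(\alpha_j z+\beta_j)$ for a common constant $c\ge 0$; absorbing $c$ into one of the factors (say into $\alpha_1,\beta_1$) yields exactly the factorization hypothesis of Theorem~\ref{thm+fun+ell} with the $\alpha_j,\beta_j$ now being \emph{nonnegative real numbers} rather than indeterminates. (If only one value of $n$ is relevant because the matrix is finite, one just fixes any common real-rooted factorization; the point is that the $a^{(i)}_n$ are then independent of $n$ in the required product form.)

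Next I would invoke the four parts of Theorem~\ref{thm+fun+ell}. Since the $\alpha_j,\beta_j$ and $\gamma=b_n$ are now nonnegative \emph{reals}, $(\balpha,\bbeta,\textbf{y})$-total positivity (resp. $(\balpha,\bbeta,\gamma)$-total positivity) specializes to ordinary total positivity of the corresponding matrices, as noted in the discussion of $\textbf{x}$-total positivity in the excerpt. Thus Theorem~\ref{thm+fun+ell}(i) gives that $M$ is totally positive, proving (i); and it also gives that the Toeplitz matrix $\mathcal{T}$ of each row of $M$ (for $t\ge 1$) is totally positive, so each row is a P\'olya frequency sequence, which by Br\"and\'en's theorem cited in the excerpt is then infinitely log-concave; that is (iii). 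Theorem~\ref{thm+fun+ell}(ii) gives total positivity of the Toeplitz matrix of each column, hence each column is a P\'olya frequency sequence and infinitely log-concave, giving (ii). Theorem~\ref{thm+fun+ell}(iii) gives total positivity of the Toeplitz matrix of $(M_{n+\delta i,k+\sigma i})_i$ under the stated constraints $0\le k\le n$, $0<\delta$, $\max\{k,\delta\}<\sigma$, hence that sequence is P\'olya frequency and infinitely log-concave, giving (iv).

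The only genuine content beyond quoting Theorem~\ref{thm+fun+ell} is the translation between "same $\ell$ non-positive real zeros" and "factorization $\prod_{j=1}^{\ell}(\alpha_j z+\beta_j)$ with $\alpha_j,\beta_j\ge 0$," together with the remarks that (a) Toeplitz-total positivity of a sequence is, by definition, the P\'olya frequency property, and (b) P\'olya frequency of a finite sequence implies infinite log-concavity (Br\"and\'en). I would state (a) using the fundamental representation theorem / the equivalence "finite sequence $s_0,\dots,s_n$ is PF $\iff \sum_k s_k z^k$ has only non-positive real zeros" that is already recalled in the excerpt just before the proposition, and (b) by citing \cite{Bra09} as in the introduction.

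I do not expect any real obstacle: the proposition is a specialization/packaging result, and the only place to be slightly careful is the bookkeeping when the common leading constant $c$ is not $1$ (it must be distributed among the $\alpha_j,\beta_j$ so that all remain nonnegative, which is possible since $c\ge 0$), and the observation that when $\ell$ is such that the polynomial has degree $<\ell$ one simply allows some $\alpha_j=0$. With that, each of the four items follows by reading off the corresponding item of Theorem~\ref{thm+fun+ell} under the real-number specialization, so the proof will be short.
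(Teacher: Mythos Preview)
Your proposal is correct and matches the paper's own treatment: the paper states explicitly that the proposition ``is immediate from (i), (ii) and (iii) of Theorem~\ref{thm+fun+ell}'' via the real-rootedness characterization of finite P\'olya frequency sequences, which is precisely the route you outline. Your added bookkeeping about absorbing the leading constant and allowing some $\alpha_j=0$ is more careful than the paper bothers to be, but the underlying argument is identical.
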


\section{The proof of Theorem \ref{thm+main}}\label{section+proof+thm+main}
By Definition \ref{def+path}, we immediately have the following
recurrence relation for the matrix $M$ in Definition
\ref{def+Matrix}.

\begin{prop}\label{prop+rec+1+M}
Let $M=[M_{n,k}]_{n,k}$ be the matrix in Definition \ref{def+Matrix}
. For $n\geq t$, $M_{n,k}$ satisfies the recurrence relation
\begin{equation}\label{rec+1+M}
M_{n,k}=a^{(0)}_n(\textbf{x})M_{n-t,k-1}+a^{(1)}_n(\textbf{x})M_{n-t-1,k-1}+\ldots+a^{(\ell)}_n(\textbf{x})M_{n-t-\ell,k-1}+b_n(\textbf{y})M_{n-1,k},\\
\end{equation}
where initial conditions $M_{0,0}=1$, $M_{n,k}=0$ for $n<0$ or $k<0$, and
$$M_{n,k} =\begin{cases}
                                (a^{(0)}_0)^{k}      & \text{for} ~t=0,~n=0,~k\geq1;\\
                                \prod\limits_{i=0}^{n-1}b_{n-i}      & \text{for}~t\geq2,~1\leq n \leq t-1,~k=0;\\
                                 0                                    & \text{for}~t\geq1,~0\leq n \leq t-1,~k\geq1.
         \end{cases}$$
\end{prop}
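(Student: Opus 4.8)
The plan is to derive the recurrence \eqref{rec+1+M} directly from the combinatorial definition of $M_{n,k}$ as a sum over weighted lattice paths in $\mathscr{M}_{n,k}$. First I would fix $n\geq t$ and $k$, and classify every path $P$ from $(0,0)$ to $(k,n)$ according to its last step, i.e. the step ending at $(k,n)$. By Definition \ref{def+path} this last step is either a vertical step $(0,1)$ arriving from the point $(k,n-1)$, or a step $(1,t+i)$ for some $0\leq i\leq \ell$ arriving from the point $(k-1,n-t-i)$ (and this requires $n-t-i\geq 0$, which is why some boundary terms vanish). In the first case the deleted last step contributes the weight $b_n(\textbf{y})$ (it starts at height $n-1$) and the remaining portion of $P$ is an arbitrary path in $\mathscr{M}_{n-1,k}$; in the second case the deleted step contributes $a_n^{(i)}(\textbf{x})$ (it starts at height $n-t-i$) and what remains is an arbitrary path in $\mathscr{M}_{n-t-i,k-1}$. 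Since the weight of a path is the product of the weights of its steps, summing the weights over each class and using Definition \ref{def+Matrix} gives exactly \eqref{rec+1+M}, with the convention $M_{m,j}=0$ whenever $m<0$ or $j<0$ absorbing the cases where the predecessor point lies outside $\mathbb{N}^2$.

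Next I would verify the stated initial/boundary values. The value $M_{0,0}=1$ corresponds to the empty path. For $t=0$, $n=0$, $k\geq 1$: any path from $(0,0)$ to $(k,0)$ can use no vertical step (height must stay $0$) and no step $(1,t+i)$ with $i\geq 1$ (that would raise the height), so it consists of exactly $k$ steps $(1,0)$, each starting at height $0$ and hence weighted $a^{(0)}_0$, giving $(a^{(0)}_0)^k$. For $t\geq 2$ and $1\leq n\leq t-1$, $k=0$: a path to $(0,n)$ uses no horizontal displacement, so it is a sequence of $n$ vertical steps $(0,1)$, the $i$-th one (counting from the top, say) starting at height $n-i$ and weighted $b_{n-i}$, yielding $\prod_{i=0}^{n-1}b_{n-i}$. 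Finally, for $t\geq 1$ and $0\leq n\leq t-1$, $k\geq 1$: reaching $x$-coordinate $k\geq 1$ forces at least one step $(1,t+i)$, which raises the height by at least $t> n$, so no such path exists and $M_{n,k}=0$.

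There is essentially no hard step here: the only thing requiring a little care is bookkeeping the height at which each deleted step \emph{starts} (so that it receives the correct weight $b_n$ or $a_n^{(i)}$, indexed by the arrival height $n$), and checking that the out-of-range terms on the right-hand side of \eqref{rec+1+M} are precisely those killed by the convention $M_{m,j}=0$ for $m<0$ or $j<0$, together with the degenerate boundary rows just described. Since Definition \ref{def+path} already encodes all of this, the proposition follows immediately and the proof can be stated in a few lines.
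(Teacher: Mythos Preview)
Your proposal is correct and is exactly the argument the paper has in mind: the authors simply state that the recurrence follows ``immediately'' from Definition~\ref{def+path}, and your last-step decomposition together with the boundary checks is precisely the unwinding of that immediacy. There is nothing to add.
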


Furthermore, we also have another recurrence relation for $M_{n,k}$
as follows.
\begin{prop}\label{prop+rec+2+M}
Let $M=[M_{n,k}]_{n,k}$ be the matrix in Definition
\ref{def+Matrix}. Then $M_{n,k}$ satisfies the recurrence relation
\begin{equation*}
M_{n,k}=\sum\limits_{j=0}^{\ell}\left(\sum\limits_{m=0}^ja_{n-m}^{(j-m)}\prod\limits_{i=0}^{m-1}b_{n-i}\right)M_{n-t-j,k-1}+\sum\limits_{j\geq0}\left(\sum\limits_{m=0}^{\ell}a_{n-j-m-1}^{(l-m)}\prod\limits_{i=0}^{j+m}b_{n-i}\right)M_{n-t-\ell-j-1,k-1},
\end{equation*}
where $\prod\limits_{i=0}^{-1}b_{n-i}=1$, $M_{0,0}=1$ and
$M_{n,0}=\prod\limits_{i=1}^{n}b_i$ for $n\geq 1 $.
\end{prop}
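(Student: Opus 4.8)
The statement to prove is Proposition \ref{prop+rec+2+M}, which gives an alternative recurrence for $M_{n,k}$ obtained by iterating the basic recurrence \eqref{rec+1+M} along vertical steps.

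\medskip

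\textbf{Proof proposal.} The plan is to iterate the fundamental recurrence \eqref{rec+1+M} of Proposition \ref{prop+rec+1+M} by repeatedly substituting the $b_n(\textbf{y})M_{n-1,k}$ term until the second column index drops. Combinatorially, this amounts to classifying every lattice path in $\mathscr{M}_{n,k}$ according to how many consecutive vertical steps $(0,1)$ it takes \emph{immediately before} its last step that increases the $x$-coordinate (i.e., the topmost non-vertical step, which lands the path at $x$-level $k$). If that last non-vertical step is a $(1,t+i)$ step taken from height $n-m-t-i$ after exactly $m$ vertical steps from height $n-1$ down through $n-m$, its weight contribution is $\left(\prod_{i=0}^{m-1}b_{n-i}\right)a_{n-m}^{(i)}M_{n-m-t-i,k-1}$, and then we sum over all admissible $m$ and $i$. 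Reindexing by $j = m + i$ (the total drop in height contributed by these trailing steps beyond $t$) organizes the sum: for each fixed $j$ with $0 \le j \le \ell$, the pairs $(m,i)$ with $m+i=j$, $0\le m\le j$, $0\le i\le \ell$ all have $i = j-m \le \ell$ automatically, giving the first sum $\sum_{j=0}^{\ell}\left(\sum_{m=0}^{j}a_{n-m}^{(j-m)}\prod_{i=0}^{m-1}b_{n-i}\right)M_{n-t-j,k-1}$; for $j > \ell$ the constraint $i \le \ell$ forces $m \ge j-\ell$, and writing $j = \ell + 1 + j'$ with $j' \ge 0$ and $m = j' + 1 + m'$ with $0 \le m' \le \ell$ yields the second sum $\sum_{j'\ge 0}\left(\sum_{m'=0}^{\ell}a_{n-j'-m'-1}^{(\ell-m')}\prod_{i=0}^{j'+m'}b_{n-i}\right)M_{n-t-\ell-j'-1,k-1}$, matching the claimed formula after renaming $j'\to j$ and $m'\to m$.

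\medskip

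Carrying this out, I would proceed as follows. First, apply \eqref{rec+1+M} once and then substitute the formula for $M_{n-1,k}$, for $M_{n-2,k}$, and so on; an easy induction on the number of substitutions shows that after peeling off the first $N$ vertical steps one has
\begin{equation*}
M_{n,k} = \sum_{m=0}^{N-1}\left(\prod_{i=0}^{m-1}b_{n-i}\right)\sum_{i=0}^{\ell}a_{n-m}^{(i)}M_{n-m-t-i,k-1} + \left(\prod_{i=0}^{N-1}b_{n-i}\right)M_{n-N,k}.
\end{equation*}
Letting $N\to\infty$ (the tail term vanishes once $n-N<0$, using the initial conditions $M_{n,k}=0$ for $n<0$), this becomes
\begin{equation*}
M_{n,k} = \sum_{m\ge 0}\left(\prod_{i=0}^{m-1}b_{n-i}\right)\sum_{i=0}^{\ell}a_{n-m}^{(i)}M_{n-m-t-i,k-1}.
\end{equation*}
Then the purely bookkeeping step is to switch to the summation variable $j$ as above, splitting the range $0\le j\le\ell$ from $j>\ell$ and re-parametrizing the latter, which produces exactly the two sums in the statement. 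I would also record the boundary case $k=0$ separately: a path from $(0,0)$ to $(0,n)$ consists only of vertical steps, so $M_{n,0}=\prod_{i=1}^{n}b_i$ (and $M_{0,0}=1$), consistent with the convention $\prod_{i=0}^{-1}b_{n-i}=1$.

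\medskip

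The only real subtlety — and the place a careless argument would slip — is the index arithmetic in the split, particularly checking that the constraints $0\le i\le\ell$ and $m\ge 0$ translate correctly into the ranges on $j$ and $m$ in each of the two sums, and that no term is double-counted or omitted at the boundary $j=\ell$ versus $j=\ell+1$. It is also worth verifying that the $a^{(i)}_{n-m}$ appearing with a negative subscript (when $n-m<t$, say) are genuinely zero or fall outside the effective range, so that the two displayed sums are actually finite and agree with the iterated recurrence; this follows from the initial conditions in Proposition \ref{prop+rec+1+M}. Everything else is a direct, if slightly tedious, reindexing, so I would present the derivation of the infinite-sum form in full and then summarize the reindexing compactly.
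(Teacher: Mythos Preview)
Your proposal is correct and follows essentially the same two approaches the paper presents: an inductive/iterative unwinding of the basic recurrence \eqref{rec+1+M} along the vertical-step term, followed by the reindexing $j=m+i$ split at $j=\ell$, together with the combinatorial reading (classifying paths by the last step that increases the $x$-coordinate, i.e., by the intersection with the line $x=k-1$). One small wording slip: the $m$ vertical steps occur \emph{after} the last non-vertical step (carrying the path from height $n-m$ up to $n$), not before it; your formulas are consistent with this, only the phrase ``immediately before'' is reversed.
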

\begin{proof}
We will present its two different proofs: one is induction, and the
other is a combinatorial proof.

\textbf{Induction:} We will give the first proof by induction on
$n$. First, we show that it holds for the first $t+1$ rows in terms
of the following two cases $t=0$ and $t\geq 1$:

Case 1: For $t=0$, we have $M_{0,0}=1$, $M_{0,k}=(a^{(0)}_0)^{k}$
for $k\geq1$, which is consistent with Definition \ref{def+Matrix}.

Case 2: For $t\geq 1$, we obtain
$$M_{n,k} =\begin{cases}
                                1     & \text{for} ~k=0, n=0;\\
                                \prod\limits_{i=1}^{n}b_{i}      & \text{for} ~k=0,~1\leq n \leq t;\\
                                    0                             & \text{for} ~k=1,~0\leq n \leq t-1;\\
                                 a^{(0)}_{t}                     & \text{for}~k=1,~n=t;\\
                                   0     & \text{for}~k\geq2,~0\leq n \leq t,\\
         \end{cases}$$
which is consistent
with Definition \ref{def+Matrix}.

So we proceed to the inductive step ($n\geq t+1$). Assume that it
holds for $n-1$. Then we will prove it holds for $n$. It is obvious
that $M_{n,0}=\prod\limits_{i=1}^{n}b_i$. By Proposition
\ref{prop+rec+1+M} and the induction hypothesis, we have
\begin{eqnarray}\label{eq+E+indu}
&&M_{n,k}\nonumber\\
&=&\sum_{j=0}^{\ell}a^{(j)}_nM_{n-t-j,k-1}+b_nM_{n-1,k}\nonumber\\
&=&\sum_{j=0}^{\ell}a^{(j)}_nM_{n-t-j,k-1}+b_n\sum\limits_{j=0}^{\ell}\left(\sum\limits_{m=0}^ja_{n-m-1}^{(j-m)}\prod\limits_{i=0}^{m-1}b_{n-1-i}\right)M_{n-t-j-1,k-1}+\nonumber\\
&&b_n\sum\limits_{j\geq0}\left(\sum\limits_{m=0}^{\ell}a_{n-j-m-2}^{(l-m)}\prod\limits_{i=0}^{j+m}b_{n-i-1}\right)M_{n-t-\ell-j-2,k-1}\nonumber\\
&=&\sum_{j=0}^{\ell}a^{(j)}_nM_{n-t-j,k-1}+\sum\limits_{j=0}^{\ell}\left(\sum\limits_{m=0}^ja_{n-m-1}^{(j-m)}\prod\limits_{i=0}^{m}b_{n-i}\right)M_{n-t-j-1,k-1}+\nonumber\\
&&\sum\limits_{j\geq0}\left(\sum\limits_{m=0}^{\ell}a_{n-j-m-2}^{(l-m)}\prod\limits_{i=0}^{j+m+1}b_{n-i}\right)M_{n-t-\ell-j-2,k-1}\nonumber\\
&=&\underbrace{\sum_{j=0}^{\ell}a^{(j)}_nM_{n-t-j,k-1}+\sum\limits_{j=0}^{\ell-1}\left(\sum\limits_{m=0}^ja_{n-m-1}^{(j-m)}\prod\limits_{i=0}^{m}b_{n-i}\right)M_{n-t-j-1,k-1}}+\nonumber\\
&&\underbrace{\left(\sum\limits_{m=0}^{\ell}a_{n-m-1}^{(\ell-m)}\prod\limits_{i=0}^{m}b_{n-i}\right)M_{n-t-\ell-1,k-1}+\sum\limits_{j\geq0}\sum\limits_{m=0}^{\ell}a_{n-j-m-2}^{(l-m)}\prod\limits_{i=0}^{j+m+1}b_{n-i}M_{n-t-\ell-j-2,k-1}}\nonumber\\
\end{eqnarray} for $n\geq t+1$ and $k\geq 1$.

Taking $m+1\rightarrow m$ and $j+1\rightarrow j$, we have
\begin{eqnarray}\label{eq+E+indu+1}
\sum\limits_{j=0}^{\ell-1}\left(\sum\limits_{m=0}^ja_{n-m-1}^{(j-m)}\prod\limits_{i=0}^{m}b_{n-i}\right)M_{n-t-j-1,k-1}&=&\sum\limits_{j=1}^{\ell}\left(\sum\limits_{m=1}^{j}a_{n-m}^{(j-m)}\prod\limits_{i=0}^{m-1}b_{n-i}\right)M_{n-t-j,k-1}.\nonumber\\
\end{eqnarray}
Similarly, taking $j+1\rightarrow j$, we also have
\begin{eqnarray}\label{eq+E+indu+2}
\sum\limits_{j\geq0}\sum\limits_{m=0}^{\ell}a_{n-j-m-2}^{(\ell-m)}\prod\limits_{i=0}^{j+m+1}b_{n-i}M_{n-t-\ell-j-2,k-1}
=\sum\limits_{j\geq1}\sum\limits_{m=0}^{\ell}a_{n-j-m-1}^{(\ell-m)}\prod\limits_{i=0}^{j+m}b_{n-i}M_{n-t-\ell-j-1,k-1}.\nonumber\\
\end{eqnarray}
In consequence, combining (\ref{eq+E+indu}), (\ref{eq+E+indu+1}) and
(\ref{eq+E+indu+2}), we obtain
\begin{eqnarray}\label{eq+E41}
&&M_{n,k}\nonumber\\
&=&\underbrace{\sum_{j=0}^{\ell}a^{(j)}_nM_{n-t-j,k-1}+\sum\limits_{j=1}^{\ell}\left(\sum\limits_{m=1}^{j}a_{n-m}^{(j-m)}\prod\limits_{i=0}^{m-1}b_{n-i}\right)M_{n-t-j,k-1}}+\nonumber\\
&&\underbrace{\left(\sum\limits_{m=0}^{\ell}a_{n-m-1}^{(\ell-m)}\prod\limits_{i=0}^{m}b_{n-i}\right)M_{n-t-\ell-1,k-1}+\sum\limits_{j\geq1}\left(\sum\limits_{m=0}^{\ell}a_{n-j-m-1}^{(\ell-m)}\prod\limits_{i=0}^{j+m}b_{n-i}\right)M_{n-t-\ell-j-1,k-1}}\nonumber\\
&=&\sum\limits_{j=0}^{\ell}\left(\sum\limits_{m=0}^ja_{n-m}^{(j-m)}\prod\limits_{i=0}^{m-1}b_{n-i}\right)M_{n-t-j,k-1}+\sum\limits_{j\geq0}\left(\sum\limits_{m=0}^{\ell}a_{n-j-m-1}^{(l-m)}\prod\limits_{i=0}^{j+m}b_{n-i}\right)M_{n-t-\ell-j-1,k-1}.\nonumber
\end{eqnarray}
This completes the proof by induction on $n$.

\textbf{Combinatorial proof:} For each lattice path $\mathcal {L}$
from the vertex $(0,0)$ to the vertex $(k,n)$, in terms of steps
used, we can see that $\mathcal {L}$ must first arrive at the line
$x=k-1$. So denote by $(k-1,z)$ the intersection of $\mathcal {L}$
and the line $x=k-1$. Obviously, $0\leq z\leq n-t$. Then we can
divide $\mathcal {L}$ into two paths $\mathcal {L}_1$ and $\mathcal
{L}_2$, where $\mathcal {L}_1$ is one path from $(0,0)$ to $(k-1,z)$
and $\mathcal {L}_2$ is one path from $(k-1,z)$ to $(k,n)$. We will
consider the path $\mathcal {L}$ in terms of the following two cases
for $z$:

(1) For $n-t-\ell\leq z \leq n-t$, let $z=n-t-j$ $(0\leq
j\leq\ell)$. So $\mathcal {L}_1$ is one path from $(0,0)$ to
$(k-1,n-t-j)$ and $\mathcal {L}_2$ is one path from $(k-1,n-t-j)$ to
$(k,n)$. Let $0\leq m\leq j$, which implies $n-j\leq n-m\leq n$. For
$\mathcal {L}_2$, it must first use one step $(k-1,n-t-j)\rightarrow
(k,n-m)$, then use other steps $(k,n-m+i)\rightarrow (k,n-m+i+1)$
for $0\leq i\leq m-1$.

 (2) For $0\leq z \leq n-t-\ell-1$, let $z=n-t-\ell-j-1$ $(j\geq0)$.
So $\mathcal {L}_1$ is one path from $(0,0)$ to $(k-1,n-t-\ell-j-1)$
and $\mathcal {L}_2$ is one path from $(k-1,n-t-\ell-j-1)$ to
$(k,n)$. Let $0\leq m\leq \ell$. So $n-\ell-j-1\leq n-m-j-1\leq
n-j-1$. For $\mathcal {L}_2$, it must first use one step
$(k-1,n-t-\ell-j-1)\rightarrow (k,n-m-j-1)$, then use other steps
$(k,n-m-j-1+i)\rightarrow (k,n-m-j+i)$ for $0\leq i\leq m+j$.

Hence, we obtain
\begin{eqnarray}
&&M_{n,k}\nonumber\\
&=&\sum\limits_{j=0}^{\ell}M_{n-t-j,k-1}\left(\sum\limits_{m=0}^ja_{n-m}^{(j-m)}\prod\limits_{i=0}^{m-1}b_{n-i}\right)+\sum\limits_{j\geq0}M_{n-t-\ell-j-1,k-1}\left(\sum\limits_{m=0}^{\ell}a_{n-j-m-1}^{(l-m)}\prod\limits_{i=0}^{j+m}b_{n-i}\right)\nonumber
\end{eqnarray}
for $n\geq t$ and $k\geq 1$. The combinatorial proof is complete.
\end{proof}

\textbf{Proof of  Theorem \ref{thm+main}:} (ii) follows from (i).
Therefore we only need to prove that (i) holds. We will divide its
proof into two cases in terms of $t=0$ or $t\geq1$:

 Case 1: Assume $t=0$.  Let
  ${\overrightarrow{M}}_{k}$ denote the matrix consisting of columns
  from $0$th to $k$th of $M$ and define a matrix $$P=[P_{n,k}]_{n,k},$$
 where
  \begin{eqnarray}\label{formula+P}
         P_{n,k} =\begin{cases}   1                                   & \text{for} ~ k=0,~n=0;\\
                                 \prod\limits_{i=0}^{n-1}b_{n-i}      & \text{for} ~k=0,~n\geq1;\\
                                 0                                    & \text{for} ~ k\geq1,~n<k-1;\\
                                 \sum\limits_{m\geq0}a_{n-m}^{(n-k-m+1)}\prod\limits_{i=0}^{m-1}b_{n-i} & \text{for} ~ k\geq1,~k-1\leq n\leq k+\ell-1;\\
                                 \sum\limits_{m=0}^{\ell}a_{k+\ell-m-1}^{(\ell-m)}\prod\limits_{i=0}^{n+m-k-\ell}b_{n-i} & \text{for} ~k\geq1,~ n>k+\ell-1,
                    \end{cases}
  \end{eqnarray}
  or in other words,
  $$
               P=\left(
                        \begin{array}{cccccc}
                                      1&a^{(0)}_0\\
                                      b_1&a^{(1)}_1+b_1a^{(0)}_0&a^{(0)}_1\\
                                      b_1b_2&a^{(2)}_{2}+b_2a^{(1)}_1+b_2b_1a^{(0)}_0&a^{(1)}_{2}+b_2a^{(0)}_1&a^{(0)}_2\\
                                      \vdots&\vdots&\vdots&\vdots&\ddots\\
                         \end {array}
                 \right).
  $$

  By Proposition \ref{prop+rec+2+M}, we immediately have
  \begin{equation}\label{decomposition}
     {\overrightarrow{M}}_{k}= P \left(
                                   \begin{array}{cc}
                                                  1&0\\
                                                  0&{\overrightarrow{M}}_{k-1}\\
                                   \end{array}
                                 \right).
  \end{equation}

  Let
 $$\Delta:=\left(
                   \begin{array}{cccccccc}
                                1& a_0^{(0)} \\
                                0& a_1^{(1)}& a_1^{(0)} \\
                                0& a_2^{(2)}& a_2^{(1)} & a_2^{(0)}\\
                                0& a_3^{(3)}& a_3^{(2)} & a_3^{(1)} & a_3^{(0)} \\
                              \vdots&\vdots & \vdots & \vdots & \vdots & \ddots\\
                               0&  a_\ell^{(\ell)}& a_\ell^{(\ell-1)} & a_\ell^{(\ell-2)} & a_\ell^{(\ell-3)}&\cdots&a_\ell^{(0)} \\
                               \vdots && \ddots & \ddots & \ddots &\ddots & \cdots& \ddots\\
                  \end{array}
              \right)=(1,\mathcal{A}).$$
 Then it is not hard to check the following connection between matrices $P$
 and $\Delta$:
 \begin{equation}\label{connection}
 P=\prod\limits_{i\geq0} E_{i+1,i}[b_{i+1}]\Delta,
 \end{equation}
  where $E_{i+1,i}[b_{i+1}]$
  denotes the matrix with elements $e_{i+1,i}=b_{i+1},e_{i,i}=1$ and otherwise
  $e_{i,j}=0$, and the order in $\prod\limits_{i\geq0} E_{i+1,i}[b_{i+1}]$ is
  increasing from the right hand side to left hand side, i.e.,
  $$\prod\limits_{i\geq0} E_{i+1,i}[b_{i+1}]=\cdots E_{3,2}[b_{3}]\cdot E_{2,1}[b_{2}]\cdot E_{1,0}[b_{1}].$$
  Obviously, $E_{i+1,i}[b_{i+1}]$ is
  $\textbf{y}$-totally positive.

In what follows we will proceed with the proof of total positivity
of ${\overrightarrow{M}}_{k}$ by induction on $k$. It is trivial for
$k=0$ that ${\overrightarrow{M}}_{0}$ is
$(\textbf{x},\textbf{y})$-totally positive of order $r$.
  Assume that ${\overrightarrow{M}}_{k-1}$ is $(\textbf{x},\textbf{y})$-totally
  positive of order $r$. Then so is $$\left(
                                   \begin{array}{cc}
                                                  1&0\\
                                                  0&{\overrightarrow{M}}_{k-1}\\
                                   \end{array}
                                 \right).$$
 In addition, if $\mathcal{A}$ is $\textbf{x}$-totally
  positive of order $r$, then, combining (\ref{connection}) and the classical Cauchy-Binet formula gives that $P$ is $(\textbf{x},\textbf{y})$-totally
  positive of order $r$. Thus, applying the classical Cauchy-Binet formula to
(\ref{decomposition}), we conclude that ${\overrightarrow{M}}_{k}$
is $(\textbf{x},\textbf{y})$-totally
  positive of order $r$.

Case 2: Assume $t\geq1$. We will prove the desired result by
induction on the row index $n$. Let $\widetilde{P}$ be a submatrix
of $P$ by deleting columns from $1$th to $t$th and
${W}_n=[W_{i,j}]_{0\leq i,j\leq n}$ denote the $n$th order leading
principal submatrix of the matrix $W$. Then by Proposition
\ref{prop+rec+2+M}, we obtain
  \begin{equation}\label{A}
               {M}_{n}                      = \widetilde{P}_n \left(
                                              \begin{array}{cc}
                                                               1&0\\
                                                               0&{M}_{n-1}\\
                                               \end{array}
                                          \right).
  \end{equation}
  Assume that $\mathcal{A}$ is $\textbf{x}$-totally
  positive of order $r$. Then $P$ is $(\textbf{x},\textbf{y})$-totally
  positive of order $r$. So is $\widetilde{P}$. In consequence, its leading principal submatrix $\widetilde{P}_n$ is $(\textbf{x},\textbf{y})$-totally
  positive of order $r$. By induction on $n$, we immediately derive that ${M}_n$ is $(\textbf{x},\textbf{y})$-totally
  positive of order $r$. So is the matrix $M$.

 This completes the proof of Theorem \ref{thm+main}.\qed

\section{The Lindstr\"om--Gessel--Viennot lemma}\label{section+LGV+lemma}

In this section, we will first introduce the
Lindstr\"om--Gessel--Viennot lemma. Recall the
Lindstr\"om--Gessel--Viennot lemma as follows: Let $\Gamma = (V,
\vec{E})$ be a directed graph, where the vertex set $V$ and the edge
set $\vec{E}$ need not be finite, and we assume that for each
ordered pair $(i,j) \in V \times V$ there is at most one edge from
$i$ to $j$. For an edge $(i,j) \in \vec{E}$, give an indeterminate
$w_{ij}$ as the weight on it. Now let ${\bf w} = (w_{ij})_{(i,j) \in
\vec{E}}$ be the set of edge weights of $\Gamma$.
%
For $i,j \in V$, a \emph{walk} from $i$ to $j$ (of length $n \ge 0$)
is a sequence $\gamma = (\gamma_0,\ldots,\gamma_n)$ in $V$ such that
$\gamma_0 = i$, $\gamma_n = j$, and $(\gamma_{k-1},\gamma_k) \in
\vec{E}$ for $1 \le k \le n$. The \emph{weight} of a walk is the
product of the weights of its edges:
\begin{equation}
   W(\gamma)  \;=\;  \prod_{k=1}^n w_{\gamma_{k-1} \gamma_k}
   \;,
\end{equation}
 where the empty product is defined as usual to be 1. Note that
for each $i,j \in V$, the total weight of walks from $i$ to $j$,
namely
\begin{equation}
   b_{ij}  \;=\;  \sum_{\gamma \colon\, i \to j}  W(\gamma)
   \;,
\end{equation} is a well-defined element of the formal-power-series ring
$Z[[{\bf w}]]$, since each monomial in the indeterminates ${\bf w}$
corresponds to at most finitely many walks.
We denote the \emph{walk matrix} by $B = (b_{ij})_{i,j \in V}$.

For an $r \times r$ minor of the walk matrix, we view the
corresponding rows $i_1,\ldots,i_r \in V$ as the \emph{source
vertices} and the corresponding columns $j_1,\ldots,j_r \in V$ as
the \emph{sink vertices}. Here $i_1,\ldots,i_r$ are all distinct,
and $j_1,\ldots,j_r$ are all distinct. We say that walks
$\gamma_1,\ldots,\gamma_r$ are vertex-disjoint if for $1 \leq k <
\ell \leq r$, the walks $\gamma_k$ and $\gamma_\ell$ have no
vertices in common (not even the endpoints). The
Lindstr\"om--Gessel--Viennot lemma
\cite{Lindstrom_73,Gessel-Viennot_89} expresses such a minor as a
sum over {\em vertex-disjoint}\/ systems of walks from the source
vertices to the sink vertices of the acyclic directed graph
$\Gamma$:

\begin{lem}[Lindstr\"om--Gessel--Viennot lemma]
   \label{lem+gessel-viennot}
Suppose that the directed graph $\Gamma$ is acyclic.  Then
\begin{equation}
   \det B\biggl(\!\! \begin{array}{c}
                        i_1,\ldots,i_r \\
                        j_1,\ldots,j_r
                     \end{array}
           \!\!\biggr)
   \;=\;
   \sum_{\sigma \in \mathfrak{S}_r}  \sgn(\sigma)
   \!\!\!
   \sum_{\begin{scarray}
             \gamma_1 \colon\, i_1 \to j_{\sigma(1)}  \\[-1mm]
                   \vdots \\[-1mm]
             \gamma_r \colon\, i_r \to j_{\sigma(r)}  \\[1mm]
             \gamma_k \cap \gamma_\ell = \emptyset
               \hboxscript{ for } k \ne \ell
         \end{scarray}
        }
   \!\!\!
   \prod_{i=1}^r W(\gamma_i),
 \label{eq.lemma.gessel-viennot}
\end{equation} where the sum runs over all $r$-tuples of vertex-disjoint walks
$\gamma_1,\ldots,\gamma_r$ connecting the specified vertices.
\end{lem}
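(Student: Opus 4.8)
The plan is to prove Lemma~\ref{lem+gessel-viennot} by the classical sign-reversing involution on systems of walks. First I would expand the left-hand side via the Leibniz formula,
\[
   \det B\biggl(\!\! \begin{array}{c}
                        i_1,\ldots,i_r \\
                        j_1,\ldots,j_r
                     \end{array} \!\!\biggr)
   \;=\; \sum_{\sigma \in \mathfrak{S}_r} \sgn(\sigma) \prod_{k=1}^r b_{i_k j_{\sigma(k)}},
\]
and then substitute $b_{i_k j_{\sigma(k)}} = \sum_{\gamma_k \colon i_k \to j_{\sigma(k)}} W(\gamma_k)$ and distribute. Since each $W(\gamma_k)$ is a monomial in $\mathbf{w}$, this rewrites the left-hand side as a single sum, over all pairs $\bigl(\sigma,(\gamma_1,\ldots,\gamma_r)\bigr)$ with $\gamma_k \colon i_k \to j_{\sigma(k)}$, of the signed weight $\sgn(\sigma)\prod_{k=1}^r W(\gamma_k)$; acyclicity is exactly what guarantees that each coefficient so obtained is a well-defined element of $\mathbb{Z}[[\mathbf{w}]]$, as already noted. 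I would then split this sum according to whether the walk system $(\gamma_1,\ldots,\gamma_r)$ is vertex-disjoint or not. By the very definition of the right-hand side of~(\ref{eq.lemma.gessel-viennot}), the subsum over vertex-disjoint systems is precisely that right-hand side, so the whole statement reduces to showing that the subsum over systems in which some two walks share a vertex equals $0$.

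To prove the latter I would exhibit a weight-preserving, sign-reversing involution $\Phi$ on the set of such ``intersecting'' walk systems (here $\sigma$ is determined by the system, since the $j_m$ are distinct). Given an intersecting system, let $k$ be the least index such that $\gamma_k$ meets another walk, let $v$ be the first vertex along $\gamma_k$ (reading from $i_k$) that lies on some other walk, and let $\ell \neq k$ be the least index with $v \in \gamma_\ell$ (one checks $\ell>k$ automatically). Define $\Phi$ by swapping the two walks' portions after $v$: the new walk $\gamma_k'$ follows $\gamma_k$ from $i_k$ to $v$ and then $\gamma_\ell$ from $v$ onward, and symmetrically $\gamma_\ell'$ follows $\gamma_\ell$ up to $v$ and then $\gamma_k$ beyond $v$; all other walks are untouched. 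The new system uses the same multiset of edges, so $\prod_k W(\gamma_k)$ is preserved; the two affected walks have exchanged sinks, so the underlying permutation is multiplied by the transposition $(k\,\ell)$ and $\sgn$ flips. Hence $\Phi$ partitions the intersecting systems into pairs on which the two contributions cancel, and since $\Phi$ fixes the monomial $\prod_k W(\gamma_k)$ (which is carried by only finitely many systems) this cancellation is legitimate in $\mathbb{Z}[[\mathbf{w}]]$, giving the desired vanishing.

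The part that requires the most care will be verifying that $\Phi$ is genuinely an involution, i.e.\ that the distinguished data $(k,v,\ell)$ extracted from $\Phi(\text{system})$ coincide with those of the original system, so that a second application of $\Phi$ restores the swapped tails. This is where acyclicity enters essentially: since $\Gamma$ is acyclic every walk is a simple path, so in particular the part of $\gamma_k$ before $v$ and the part after $v$ share only $v$; consequently exchanging the tails beyond $v$ neither creates nor removes any intersection among the vertices preceding $v$, the minimal ``bad'' index stays $k$, the first bad vertex along $\gamma_k'$ stays $v$, and the partner index stays $\ell$. Once this bookkeeping is settled, combining the two pieces---the vertex-disjoint subsum equals the right-hand side, and the intersecting subsum is $0$---yields identity~(\ref{eq.lemma.gessel-viennot}).
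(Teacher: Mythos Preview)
The paper does not prove Lemma~\ref{lem+gessel-viennot}; it merely states the Lindstr\"om--Gessel--Viennot lemma and cites \cite{Lindstrom_73,Gessel-Viennot_89} for the proof. Your proposal supplies the standard sign-reversing involution argument, and it is correct: the Leibniz expansion followed by the tail-swap involution is exactly the classical proof, and your care in checking that the triple $(k,v,\ell)$ is preserved under $\Phi$ (using acyclicity to ensure walks are simple paths) is the right place to focus. There is nothing to compare against in the paper itself.
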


Let $\textbf{i} = (i_1,\ldots,i_r)$ be an ordered $r$-tuple of
distinct vertices of $\Gamma$, and let $\textbf{j} =
(j_1,\ldots,j_r)$ be another such ordered $r$-tuple; we say that the
pair $(\textbf{i}, \textbf{j})$ is \textbf{nonpermutable} if the set
of vertex-disjoint walk systems $\gamma =
(\gamma_1,\ldots,\gamma_r)$ satisfying $\gamma_k \colon\, i_k \to
j_{\sigma(k)}$ is empty whenever $\sigma$ is not the identity
permutation. In this situation we avoid the sum over permutations in
(\ref{eq.lemma.gessel-viennot}); most importantly, we avoid the
possibility of terms with $\sgn(\sigma) = -1$. The nonpermutable
case arises frequently in applications: for instance, if $\Gamma$ is
planar, then the nonpermutability often follows from topological
arguments.

Now let $I$ and $J$ be subsets (not necessarily finite) of $V$,
equipped with total orders $<_I$ and $<_J$, respectively. We say
that the pair $\bigl((I,<_I),(J,<_J)\bigr)$ is \textbf{fully
nonpermutable} if for each $r \ge 1$ and each pair of increasing
$r$-tuples $\textbf{i} = (i_1,\ldots,i_r)$ in $(I,<_I)$ and
$\textbf{j} = (j_1,\ldots,j_r)$ in $(J,<_J)$, the pair $(\textbf{i},
\textbf{j})$ is nonpermutable. Then the Lindstr\"om--Gessel--Viennot
lemma immediately gives the following result for total positivity:

\begin{cor}[Lindstr\"om--Gessel--Viennot lemma and total positivity]
   \label{cor.gessel-viennot.totpos}
Assume that the directed graph $\Gamma$ is acyclic. Let $(I,<_I)$
and $(J,<_J)$ be totally ordered subsets of $V$ such that the pair
$\bigl((I,<_I),(J,<_J)\bigr)$ is fully nonpermutable. Then the
submatrix $B_{IJ}$ (with rows and columns ordered according to $<_I$
and $<_J$) is ${\bf w}$-totally positive.
\end{cor}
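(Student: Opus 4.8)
The plan is to derive Corollary~\ref{cor.gessel-viennot.totpos} directly from Lemma~\ref{lem+gessel-viennot} by examining what the full nonpermutability hypothesis does to the right-hand side of \eqref{eq.lemma.gessel-viennot}. Fix $r \ge 1$ and choose increasing $r$-tuples $\textbf{i} = (i_1,\ldots,i_r)$ in $(I,<_I)$ and $\textbf{j} = (j_1,\ldots,j_r)$ in $(J,<_J)$; these index an arbitrary $r \times r$ submatrix of $B_{IJ}$ (entries of a totally positive matrix are indexed by increasing tuples, which is precisely how the submatrices of $B_{IJ}$ with rows/columns ordered by $<_I,<_J$ arise). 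I must show the determinant of this submatrix is a power series with nonnegative coefficients in $\textbf{w}$.

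First I would apply the Lindstr\"om--Gessel--Viennot lemma to this minor, writing it as $\sum_{\sigma \in \mathfrak{S}_r} \sgn(\sigma) \sum_{\gamma} \prod_{i=1}^r W(\gamma_i)$, where the inner sum runs over vertex-disjoint walk systems with $\gamma_k \colon i_k \to j_{\sigma(k)}$. Next I would invoke the full nonpermutability hypothesis: since $(\textbf{i},\textbf{j})$ is a pair of increasing tuples in $(I,<_I)$ and $(J,<_J)$, it is nonpermutable by definition, so for every $\sigma \ne \mathrm{id}$ the set of vertex-disjoint walk systems realizing $\gamma_k \colon i_k \to j_{\sigma(k)}$ is empty, and the corresponding inner sum vanishes. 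Hence all terms with $\sgn(\sigma) = -1$ (indeed all terms with $\sigma \ne \mathrm{id}$) drop out, leaving
\[
   \det B\biggl(\!\! \begin{array}{c} i_1,\ldots,i_r \\ j_1,\ldots,j_r \end{array} \!\!\biggr)
   \;=\;
   \sum_{\begin{scarray} \gamma_1 \colon i_1 \to j_1 \\[-1mm] \vdots \\[-1mm] \gamma_r \colon i_r \to j_r \\[1mm] \gamma_k \cap \gamma_\ell = \emptyset \end{scarray}}
   \prod_{i=1}^r W(\gamma_i).
\]

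Finally I would observe that each walk weight $W(\gamma_i) = \prod_k w_{\gamma_{k-1}\gamma_k}$ is a monomial in the indeterminates $\textbf{w}$ with coefficient $1$, so each product $\prod_{i=1}^r W(\gamma_i)$ is again a monomial with coefficient $1$, and the displayed sum — being a (possibly infinite, but coefficientwise well-defined, as noted in the setup for $b_{ij}$) sum of such monomials — is a formal power series in $\textbf{w}$ with nonnegative coefficients. Since the chosen $r \times r$ submatrix was arbitrary, every minor of $B_{IJ}$ is a power series with nonnegative coefficients, which is exactly the assertion that $B_{IJ}$ is $\textbf{w}$-totally positive. There is no real obstacle here beyond bookkeeping; the only point requiring a moment's care is matching the combinatorial indexing (increasing tuples) with the matrix-theoretic one (submatrices respecting the orders $<_I$, $<_J$), and checking that the vanishing of the non-identity terms is guaranteed termwise rather than only after cancellation — which is precisely the content of nonpermutability and is what lets us conclude nonnegativity rather than a mere signed identity.
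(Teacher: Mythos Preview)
Your proposal is correct and is exactly the argument the paper intends: the paper simply states that the corollary follows immediately from Lemma~\ref{lem+gessel-viennot}, and you have spelled out precisely why --- full nonpermutability kills all non-identity permutations in \eqref{eq.lemma.gessel-viennot}, leaving a sum of monomials in $\textbf{w}$ with nonnegative coefficients.
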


We also refer the reader to \cite[pp.~179--180]{Bre95} for examples
of a nonpermutable pair $(\textbf{i}, \textbf{j})$ that is not fully
nonpermutable.

The following ``topologically obvious'' fact will be very useful for
checking the condition of Corollary \ref{cor.gessel-viennot.totpos}.

\begin{lem}   \label{lemma.nonpermutable}
If $\Gamma$ is embedded in the plane and the vertices of $I \cup J$
lie on the boundary of $\Gamma$ in the order ``first $I$ in reverse
order, then $J$ in order'', then the pair $(I,J)$ is fully
nonpermutable.
\end{lem}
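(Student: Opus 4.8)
The plan is to argue by contradiction, using a topological (Jordan curve) argument of the same flavor as the classical proof that a planar network has no sign cancellations in the Lindström--Gessel--Viennot expansion. Suppose $(I,J)$ is not fully nonpermutable. Then there exist $r\ge 2$, increasing $r$-tuples $\mathbf{i}=(i_1,\dots,i_r)$ in $(I,<_I)$ and $\mathbf{j}=(j_1,\dots,j_r)$ in $(J,<_J)$, a non-identity permutation $\sigma\in\mathfrak{S}_r$, and a vertex-disjoint walk system $\gamma=(\gamma_1,\dots,\gamma_r)$ with $\gamma_k\colon i_k\to j_{\sigma(k)}$. Since $\sigma\ne\mathrm{id}$, there are indices $k<\ell$ with $\sigma(k)>\sigma(\ell)$; i.e., the walk $\gamma_k$ goes from $i_k$ to $j_{\sigma(k)}$ and $\gamma_\ell$ goes from $i_\ell$ to $j_{\sigma(\ell)}$, where $i_k$ precedes $i_\ell$ along the boundary (in the ordering ``$I$ in reverse, then $J$ in order'') while the sinks appear in the opposite relative order, so that $j_{\sigma(\ell)}$ precedes $j_{\sigma(k)}$. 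The endpoints $i_k,i_\ell,j_{\sigma(\ell)},j_{\sigma(k)}$ therefore occur in this cyclic order around the boundary of the planar region in which $\Gamma$ is embedded.

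The key step is then the standard ``two crossing arcs on a disk must intersect'' argument. Close up each walk $\gamma_k$ and $\gamma_\ell$ by joining its endpoints through the exterior of $\Gamma$ (along the boundary), obtaining two arcs inside the closed region whose endpoints $\{i_k, j_{\sigma(k)}\}$ and $\{i_\ell, j_{\sigma(\ell)}\}$ interleave on the boundary circle. By the Jordan curve theorem (in the form: an arc in a disk with endpoints on the boundary separates the disk into two components, and the two boundary-endpoints of the other arc lie in different components), the arcs coming from $\gamma_k$ and $\gamma_\ell$ must meet at an interior point. Since the boundary-completion arcs are disjoint from $\Gamma$ (they run outside it), this common point must lie on $\gamma_k$ and $\gamma_\ell$ themselves; hence $\gamma_k$ and $\gamma_\ell$ share a point, contradicting vertex-disjointness of the system $\gamma$. (Here one uses that walks in a graph embedded in the plane, being polygonal arcs following edges, meet in a vertex whenever their underlying curves intersect, possibly after a small perturbation — or one simply works with the curves and notes that a topological intersection of the images forces a common vertex because the edge images only meet at vertices.) This contradiction shows that every vertex-disjoint system with $\gamma_k\colon i_k\to j_{\sigma(k)}$ forces $\sigma=\mathrm{id}$, which is exactly the statement that $(\mathbf{i},\mathbf{j})$ is nonpermutable for all such tuples, i.e., $(I,J)$ is fully nonpermutable.

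The main obstacle is making the informal ``topologically obvious'' crossing argument rigorous without drowning in point-set topology: one must be careful about (a) how to complete the walks into closed arcs using the prescribed boundary order, so that the interleaving of endpoints is correctly recorded, and (b) the passage from ``the two curves intersect'' to ``the two walks share a vertex,'' which relies on the fact that distinct edges of a plane-embedded graph meet only at common endpoints. Both points are genuinely standard — this is precisely the reasoning behind Lindström's original lemma and Brenti's planar-network arguments in \cite{Bre95} — so in the write-up I would state the Jordan-curve input cleanly as a lemma about arcs in a disk and then apply it, rather than reproving plane topology from scratch.
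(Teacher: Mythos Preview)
The paper does not actually prove this lemma: it is stated as a ``topologically obvious'' fact immediately before Section~\ref{section+proof+thm+tirdiag} and then simply invoked where needed. Your proposal supplies precisely the standard Jordan-curve argument that the phrase ``topologically obvious'' is gesturing at, and it is correct in outline: pick an inversion $k<\ell$ with $\sigma(k)>\sigma(\ell)$, observe that the boundary hypothesis forces the four endpoints $i_k,i_\ell,j_{\sigma(\ell)},j_{\sigma(k)}$ to interleave cyclically, and conclude that the two walks must cross, contradicting vertex-disjointness. The two caveats you flag --- completing walks to arcs via the boundary so that interleaving is recorded correctly, and passing from topological intersection of curves to a shared vertex of the graph --- are the only places requiring care, and both are handled in the standard way you describe. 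So your write-up would serve as the missing proof the paper omits.
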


\section{A combinatorial proof of Theorem \ref{thm+tridiag}}\label{section+proof+thm+tirdiag}
In this section, we will use the Lindstr\"om--Gessel--Viennot lemma
to give a combinatorial proof of Theorem \ref{thm+tridiag}. The key
issue is to construct the corresponding weighted digraph for a
matrix. So we will give the corresponding weighted digraphs for the
matrix $M$ and the Toeplitz matrix of the each row sequence of $M$,
respectively.

\subsection {Construct weighted digraphs}

In terms of the Lindstr\"om--Gessel--Viennot lemma, in what follows
we will construct the corresponding weighted digraphs for the matrix
$M$ and the Toeplitz matrix of the each row sequence of $M$,
respectively.

In the following, we will first give the weighted digraph for the
matrix $\widetilde{P}_n$ in (\ref{A}).

\begin{definition}\label{def+gamma+TG1}
Let $\Gamma_n= (V, \vec{E})$, where $ V=[n+3]\times[n+1]$, and
$\vec{E}$ is composed of  the following four kinds of arcs:

\begin{itemize}
    \item [(i)]
     $\textsl{a} = (i,j) \rightarrow (i+1,j)$, the weight
      \begin{eqnarray*}
         w_{\textsl{a}} =\begin{cases}   1        &\text{for} ~ (i,j)\in [1,n]\times[1,n]~\text{or}~i\in [1,n+2]~\text{and}~j=n+1;\\
                         \lambda_{n+t-j}      & \text{for} ~i=n+2~\text{and}~1\leq j\leq n.
                    \end{cases}
     \end{eqnarray*}
     \item [(ii)]
     $\textsl{a} = (i,j) \rightarrow (i+1,j+1)$, the weight
     \begin{eqnarray*}
         w_{\textsl{a}} =\begin{cases}  b_{n+1-i}       &\text{for} ~ 1\leq i=j\leq n;\\
                                       \mu_{n+t-j}       &\text{for}~i=n+2~\text{and}~1\leq j\leq n-1.\\
                    \end{cases}
  \end{eqnarray*}
    \item [(iii)]
    $\textsl{a}= (n+1,j) \rightarrow (n+2,j+t-1)$, the weight $w_{\textsl{a}} = \alpha_{n+1-j}$ for $1\leq j\leq n+1-t$.
    \item [(iv)]
    $\textsl{a} = (n+1,j) \rightarrow (n+2,j+t)$, the weight $w_{\textsl{a}} = \beta_{n+1-j}$ for $1\leq j\leq n-t$.
\end{itemize}
Denote the vertex $(1,j)$ (resp. $(n+3,j)$) by $Q_{j-1}^{(n)}$ (resp.
$Q_{j-1}^{(n-1)}$) for $1\leq j\leq n+1$. Furthermore,
vertices $Q_n^{(n)},Q_{n-1}^{(n)},\ldots,Q_0^{(n)}$~(resp. $Q_n^{(n-1)},
Q_{n-1}^{(n-1)},\ldots,Q_0^{(n-1)}$) are viewed as the source vertices
(resp. the sink vertices).
\end{definition}
In order to  understand such $\Gamma_n= (V, \vec{E})$, we give an
example for $t=1,n=3$ in Figure~6.

\begin{center}
\setlength{\unitlength}{0.95cm}
\begin{picture}(9,5)(5,1.7)

 \thicklines\put(5.5,6){\line(1,0){1.5}}\thicklines\put(10,6){\line(1,0){1.5}}
 \thicklines\put(5.5,5){\line(1,0){1.5}}\thicklines\put(10,5){\line(1,0){1.5}}
\thicklines\put(5.5,4){\line(1,0){1.5}}\thicklines\put(10,4){\line(1,0){1.5}}
 \thicklines\put(5.5,3){\line(1,0){1.5}}\thicklines\put(10,3){\line(1,0){1.5}}

\thicklines\put(7,6){\line(1,0){1.5}}
\thicklines\put(8.5,6){\line(1,0){1.5}}
\thicklines\put(7,5){\line(1,0){1.5}}
\thicklines\put(8.5,5){\line(1,0){1.5}}
\thicklines\put(7,4){\line(1,0){1.5}}
\thicklines\put(8.5,4){\line(1,0){1.5}}
\thicklines\put(7,3){\line(1,0){1.5}}
\thicklines\put(8.5,3){\line(1,0){1.5}}

\thicklines\put(11.5,6){\line(1,0){1.5}}
\thicklines\put(11.5,5){\line(1,0){1.5}}
\thicklines\put(11.5,4){\line(1,0){1.5}}
\thicklines\put(11.5,3){\line(1,0){1.5}}

\thicklines\put(5.5,3){\line(3,2){1.5}}
\thicklines\put(10,4){\line(3,2){1.5}}
\thicklines\put(8.5,5){\line(3,2){1.5}}
\thicklines\put(10,3){\line(3,2){1.5}}
  \thicklines\put(7,4){\line(3,2){1.5}}    \thicklines\put(11.5,3){\line(3,2){1.5}}
\thicklines\put(11.5,4){\line(3,2){1.5}}

 \put(5.5,6){\circle*{0.16}} \put(7,6){\circle*{0.16}}
 \put(5.5,5){\circle*{0.16}} \put(7,5){\circle*{0.16}}
 \put(5.5,4){\circle*{0.16}} \put(7,4){\circle*{0.16}}
 \put(5.5,3){\circle*{0.16}} \put(7,3){\circle*{0.16}}

\put(8.5,6){\circle*{0.16}} \put(10,6){\circle*{0.16}}
\put(8.5,5){\circle*{0.16}} \put(10,5){\circle*{0.16}}
\put(8.5,4){\circle*{0.16}} \put(10,4){\circle*{0.16}}
\put(8.5,3){\circle*{0.16}} \put(10,3){\circle*{0.16}}

 \put(11.5,6){\circle*{0.16}}\put(13,6){\circle*{0.16}}
\put(11.5,5){\circle*{0.16}}\put(13,5){\circle*{0.16}}
\put(11.5,4){\circle*{0.16}}\put(13,4){\circle*{0.16}}
\put(11.5,3){\circle*{0.16}}\put(13,3){\circle*{0.16}} \tiny

\put(5,6.15){$Q_3^{(3)}$}      \put(6.2,6.1){1}
\put(5,5.15){$Q_{2}^{(3)}$}  \put(6.2,5.1){1}
\put(5,4.15){$Q_{1}^{(3)}$}  \put(6.2,4.1){1}
\put(5,3.15){$Q_{0}^{(3)}$}  \put(6.2,3.1){1}

 \put(7.7,6.1){1} \put(9.2,6.1){1}\put(12.2,6.1){1}
 \put(7.7,5.1){1} \put(9.2,5.1){1}\put(12.2,5.1){$\lambda_1$}
 \put(7.7,4.1){1} \put(9.2,4.1){1}\put(12.2,4.1){$\lambda_2$}
 \put(7.7,3.1){1} \put(9.2,3.1){1}\put(12.2,3.1){$\lambda_3$}

\put(10.7,6.1){$1$}             \put(13.15,6.1){$Q_3^{(2)}$}
\put(10.7,5.1){$\alpha_1$}     \put(13.15,5.1){$Q_{2}^{(2)}$}
\put(10.7,4.1){$\alpha_2$}     \put(13.15,4.1){$Q_{1}^{(2)}$}
\put(10.7,3.1){$\alpha_3$}     \put(13.15,3.1){$Q_{0}^{(2)}$}

\put(9.1,5.65){$b_1$} \put(7.5,4.6){$b_2$}
\put(10.45,4.6){$\beta_2$} \put(6,3.6){$b_3$}
\put(10.45,3.6){$\beta_3$}

\put(12.2,6.1){1} \put(11.95,4.6){$\mu_2$} \put(11.95,3.6){$\mu_3$}
\normalsize \put(4.1,1.7){Figure~6: $\Gamma_3 $ for $t=1$. All edges
point towards the right.}
\end{picture}
\end{center}

For $\Gamma_n$ in Definition \ref{def+gamma+TG1}, we denote by
$B(n)$ its walk matrix. The following consequence gives the relation
between the walk matrix $B(n)$ and the matrix $\widetilde{P}_n$.

\begin{prop}\label{prop+T1+P}
The walk matrix $B(n)$ for the weighted digraph $\Gamma_n$ equals
the matrix $\widetilde{P}_n$.
\end{prop}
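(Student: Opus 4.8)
The plan is to exploit the layered structure of $\Gamma_n$. By Definition~\ref{def+gamma+TG1} every arc increases the first coordinate by exactly $1$, so every walk from a source (in the column $i=1$) to a sink (in the column $i=n+3$) visits the columns $i=1,2,\dots,n+3$ in turn; hence the relevant part of $B(n)$ (rows the sources $Q^{(n)}_n,\dots,Q^{(n)}_0$, columns the sinks $Q^{(n-1)}_n,\dots,Q^{(n-1)}_0$) equals $T_1T_2\cdots T_{n+2}$ with its rows and columns read in reverse order of height, where $T_i$ is the $(n+1)\times(n+1)$ transfer matrix whose $(h,h')$-entry is the weight of the arc $(i,h)\to(i+1,h')$ (and $0$ if there is none). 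I would group the factors into three blocks, mirroring the four arc types of Definition~\ref{def+gamma+TG1}: a $b$-block $T_1\cdots T_n$, built from the unit horizontal arcs and the $b$-weighted diagonal arcs; an $(\alpha,\beta)$-block $T_{n+1}$, built from the arcs (iii)--(iv) together with the single unit arc at height $n+1$; and a $(\lambda,\mu)$-block $T_{n+2}$, built from the $\lambda$-horizontal and $\mu$-diagonal arcs.

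For the $b$-block: in a column $1\le i\le n$ the only diagonal arc starts on the main diagonal, so $T_i$ differs from the identity only in the single entry $b_{n+1-i}$ in position $(i,i+1)$; consequently $T_1\cdots T_n$ has $(h,h')$-entry $\prod_{i=h}^{h'-1}b_{n+1-i}$ for $h\le h'$ and $0$ otherwise, and after reversing the orders of rows and columns this is exactly the leading principal submatrix $L_n$ of the lower-triangular matrix $L:=\prod_{i\ge0}E_{i+1,i}[b_{i+1}]$ occurring in (\ref{connection}), whose entries are $L_{i,j}=\prod_{s=j+1}^{i}b_s$. (Combinatorially: in this phase a walk is forced --- horizontal steps interrupted by a single run of $b$-diagonals along the main diagonal --- so from height $a+1$ there is a unique walk to each height $c$ at column $n+1$, of weight $\prod_{s=n+2-c}^{n-a}b_s$.)

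For the other two blocks: I would write $T_{n+1}$ and $T_{n+2}$ out from (iii)--(iv) and (i)--(ii), reverse the row and column orders, and multiply. Substituting the hypotheses (i)--(iii) of Theorem~\ref{thm+tridiag}, i.e.\ $a^{(0)}_n=\alpha_n\lambda_n$, $a^{(1)}_n=\alpha_n\mu_n+\beta_n\lambda_{n-1}$, $a^{(2)}_n=\beta_n\mu_{n-1}$, a short computation shows that the three nonzero diagonals of the reversed product $T_{n+1}T_{n+2}$ collect precisely $\alpha_m\lambda_m$, $\alpha_m\mu_m+\beta_m\lambda_{m-1}$ and $\beta_m\mu_{m-1}$ in the appropriate positions; equivalently, $T_{n+1}T_{n+2}$ (rows and columns reversed) equals the leading principal submatrix $\widetilde\Delta_n$ of the band matrix $\widetilde\Delta$ obtained from $\Delta=(1,\mathcal A)$ by deleting its columns $1,\dots,t$ --- the step of size $t-1$ (resp.\ $t$) in the arcs (iii) (resp.\ (iv)) being exactly what implements this column deletion, while the index $n+1-h$ in the weights $\alpha_{n+1-h},\beta_{n+1-h}$ produces the correct lower index $m$ in $a^{(\cdot)}_m$. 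Assembling the three blocks gives $B(n)=L_n\,\widetilde\Delta_n$.

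Finally, by the definition of $\widetilde P$ as $P$ with columns $1,\dots,t$ deleted together with the factorization $P=L\,\Delta$, $\Delta=(1,\mathcal A)$, recorded in (\ref{connection}), one has $\widetilde P=L\,\widetilde\Delta$; since $L$ is lower triangular, passing to $(n+1)\times(n+1)$ leading principal submatrices yields $\widetilde P_n=L_n\widetilde\Delta_n$, and hence $B(n)=\widetilde P_n$, as claimed. The main obstacle is not any one of these identities but the bookkeeping that ties them together: the translation $h\leftrightarrow n+1-h$ between heights in $\Gamma_n$ and row/column positions, the special role of the top height $h=n+1$ (which must carry the all-$b$ zeroth column $(1,b_1,b_1b_2,\dots)^{T}$ of $\widetilde P$ through the last two blocks untouched), the truncated height ranges that appear when $t\ge2$, and checking that the step sizes in arcs (iii)--(iv) match the column deletion defining $\widetilde P$ on the nose. (The relevant case for (\ref{A}) is $t\ge1$, where $\widetilde\Delta$ is in fact lower triangular; the case $t=0$ runs the same way.)
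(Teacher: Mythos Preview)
Your argument is correct and takes a genuinely different route from the paper's own proof. The paper proves Proposition~\ref{prop+T1+P} by brute force: it fixes $i,j$, enumerates the (at most four) walks from $Q^{(n)}_{n-i}$ to $Q^{(n-1)}_{n-j}$ in $\Gamma_n$ according to the intersection points with the columns $x=n+1$ and $x=n+2$, sums their weights using the hypotheses $a^{(0)}_m=\alpha_m\lambda_m$, $a^{(1)}_m=\alpha_m\mu_m+\beta_m\lambda_{m-1}$, $a^{(2)}_m=\beta_m\mu_{m-1}$, and then compares the result case by case with the explicit formula~(\ref{eq+combine1}) for $\widetilde P_{i,j}$. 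Your approach instead reads the layered structure of $\Gamma_n$ as a product of one-step transfer matrices, groups them as $(T_1\cdots T_n)(T_{n+1}T_{n+2})$, identifies the two blocks (after the height-reversal $h\mapsto n+1-h$) with $L_n$ and $\widetilde\Delta_n$, and then closes the loop via the factorization $P=L\Delta$ of~(\ref{connection}), which gives $\widetilde P=L\widetilde\Delta$ and hence $\widetilde P_n=L_n\widetilde\Delta_n$ by lower-triangularity of $L$. This is more structural and shorter, and it makes transparent \emph{why} the digraph was built as it was: the $b$-diagonals encode $L$, and the $(\alpha,\beta)$/$(\lambda,\mu)$ layers encode a bidiagonal-times-bidiagonal factorization of the tridiagonal $\mathcal A$; the paper's direct computation hides this. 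What the paper's approach buys is that it is self-contained and does not require tracking the reversal conjugation $J(\,\cdot\,)J$ or the column-deletion bookkeeping you flag at the end; in particular it never needs the identity $\widetilde P=L\widetilde\Delta$, only the closed formula~(\ref{eq+combine}). Your caveat about $t\ge1$ is apt: this is the regime in which~(\ref{A}) and $\widetilde P_n$ are actually used, and the arc~(iii) of Definition~\ref{def+gamma+TG1} is only unambiguous for $t\ge1$.
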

\begin{proof} We shall directly calculate the formula for $B_{i,j}$,
then check that it equals the corresponding entry of
$\widetilde{P}_n$.

Taking the vertex $Q_{i}^{(n)}$ in $V(\Gamma_n)$ as the row index $i$ and
$Q_{j}^{(n-1)}$ in $V(\Gamma_n)$ as the column index $j$, then $B_{i,j}$ is the
total weight of walks from $Q_{n-i}^{(n)}$ to $Q_{n-j}^{(n-1)}$, i.e.,
$B_{i,j}$ is the total weight of walks from $(1,n-i+1)$ to $(n+3,n-j+1)$. In the following,
we shall calculate $B_{i,j}$ according to the walks from $(1,n-i+1)$ to $(n+3,n-j+1)$.

 For $j=0$, if $i=0$, then there is only one path  from $(1,n+1)$ to
 $(n+3,n+1)$. So it is obvious that $B_{0,0}=1$.
 For $j=0$, if $i\geq 1$, then there is only one path  from $(1,n-i+1)$ to
 $(n+3,n+1)$, which immediately implies $B_{i,0}=\prod\limits_{s=0}^{i-1}b_{i-s}$.

 For $j\geq1$, let $m=i-j-t$, in terms of the range of $m$,
 we divide it into four cases to consider the paths from $(1,n-i+1)$ to $(n+3,n-j+1)$.

  (i) If $m<-1$, i.e., $i<j+t-1$, then there is no path from $(1,n-i+1)$ to $(n+3,n-j+1)$. In consequence, we have
  \begin{eqnarray}\label{eq+F511}
  B_{i,j}=0.
  \end{eqnarray}

  (ii) If $m=-1$, i.e., $i=j+t-1$, then the point $(n+3,n-j+1)$~can be
  denoted by~$(n+3,n+t-i)$, and there is only one path of $n+2$ steps from $(1,n-i+1)$ to $(n+3,n+t-i)$ as follows:
  \begin{eqnarray*}
                  \begin{cases}   (s,n-i+1) \rightarrow (s+1,n-i+1)\quad \text{for} ~1\leq s\leq n, \\
                                 (n+1,n-i+1) \rightarrow (n+2,n+t-i),\\
                                  (n+2,n+t-i)\rightarrow (n+3,n+t-i).
                    \end{cases}
  \end{eqnarray*}
  Then we have
  \begin{eqnarray*}
  B_{i,i+1-t}=\alpha_i\lambda_i.
  \end{eqnarray*}
  By Theorem \ref{thm+tridiag} (i), we obtain $\alpha_i\lambda_i=a_i^{(0)}$,~so
  \begin{eqnarray*}
  B_{i,i+1-t}=a_i^{(0)}.
  \end{eqnarray*}
  Noting $i+1-t=j$, we derive
  \begin{eqnarray}\label{eq+F512}
  B_{i,j}=a_i^{(0)}.
  \end{eqnarray}

  (iii) If~$m=0$,~i.e.,~$i=j+t$, then the point $(n+3,n-j+1)$~can be written as~$(n+3,n+t-i+1)$,
  and there are three paths from $(1,n-i+1)$ to $(n+3,n+t-i+1)$. For such paths, we classify them according to
  their intersecting points with the lines $x=n+1$ and $x=n+2$ as follows:
    the first has the following $n+2$ steps
     \begin{eqnarray*}
                  \begin{cases}   (s,n-i+1) \rightarrow (s+1,n-i+1)\quad \text{for} ~1\leq s\leq n, \\
                                 (n+1,n-i+1) \rightarrow (n+2,n+t-i),\\
                                  (n+2,n+t-i)\rightarrow (n+3,n+t-i+1);
                    \end{cases}
  \end{eqnarray*}
   the second  has the following $n+2$ steps
    \begin{eqnarray*}
                  \begin{cases}   (s,n-i+1) \rightarrow (s+1,n-i+1)\quad \text{for} ~1\leq s\leq n, \\
                                (n+1,n-i+1) \rightarrow (n+2,n+t-i+1),\\
                                 (n+2,n+t-i+1) \rightarrow (n+3,n+t-i+1);
                    \end{cases}
     \end{eqnarray*}
    and the third has the following $n+2$ steps
 \begin{eqnarray*}
                  \begin{cases}   (s,n-i+1) \rightarrow (s+1,n-i+1)\quad \text{for} ~1\leq s\leq n-i, \\
                                 (n-i+1,n-i+1) \rightarrow (n-i+2,n-i+2),\\
                                 (n-i+2+s,n-i+2) \rightarrow (n-i+3+s,n-i+2)\quad \text{for} ~0\leq s\leq i-2,\\
                                  (n+1,n-i+2) \rightarrow (n+2,n+t-i+1),\\
                                  (n+2,n+t-i+1) \rightarrow (n+3,n+t-i+1).
                    \end{cases}
     \end{eqnarray*}
  In consequence, by such three paths above,  we have
  \begin{eqnarray}\label{eq+three}
  B_{i,i-t}=\alpha_i\mu_i+\beta_i\lambda_{i-1}+b_{i}\alpha_{i-1}\lambda_{i-1}.
  \end{eqnarray}
  By Theorem \ref{thm+tridiag} (i) and (ii), we obtain
  \begin{eqnarray}\label{eq+three+condition}
   \begin{cases}
 \alpha_{i-1}\lambda_{i-1}=a_{i-1}^{(0)},\\
 \alpha_i\mu_i+\beta_i\lambda_{i-1}=a_i^{(1)}.
  \end{cases}
  \end{eqnarray}
  So combining (\ref{eq+three}) and (\ref{eq+three+condition}) gives
  $$B_{i,i-t}=a_i^{(1)}+a_{i-1}^{(0)}b_i.$$
  Noting $i-t=j$, we obtain
  \begin{eqnarray}\label{eq+F513}
  B_{i,j}=a_i^{(1)}+a_{i-1}^{(0)}b_i.
  \end{eqnarray}

  (iv) If~$m\geq1$,~i.e.,~$i\geq j+t+1$, then the point $(n+3,n-j+1)$~can be denoted by~$(n+3,n+t+m-i+1)$, and
   there are four paths from $(1,n-i+1)$ to $(n+3,n+m+t-i+1)$. For
   such paths, we classify them according to their intersecting points with the lines $x=n+1$ and $x=n+2$ as follows:
    the first has the following $n+2$ steps
    \begin{eqnarray*}
                  \begin{cases}   (s,n-i+1) \rightarrow (s+1,n-i+1)\quad \text{for} ~1\leq s\leq n-i, \\
                                 (n-i+1+s,n-i+1+s) \rightarrow (n-i+2+s,n-i+2+s)\quad \text{for}~ 0\leq s\leq m-2,\\
                                 (n+m-i+s,n+m-i) \rightarrow (n+m-i+s+1,n+m-i)\quad \text{for}~ 0\leq s\leq i-m,\\
                                 (n+1,n+m-i) \rightarrow (n+2,n+t+m-i),\\
                                 (n+2,n+t+m-i) \rightarrow (n+3,n+t+m-i+1);
                    \end{cases}
  \end{eqnarray*}
   the second has the following $n+2$ steps
   \begin{eqnarray*}
                  \begin{cases}   (s,n-i+1) \rightarrow (s+1,n-i+1)\quad \text{for} ~1\leq s\leq n-i, \\
                                 (n-i+1+s,n-i+1+s) \rightarrow (n-i+2+s,n-i+2+s)\quad \text{for}~ 0\leq s\leq m-1,\\
                                 (n+m-i+1+s,n+m-i+1) \rightarrow (n+m-i+2+s,n+m-i+1)\\
                                 \quad\quad\quad\quad\quad\quad\quad\quad\quad\quad\quad\quad\quad\quad\quad\quad\quad\quad\quad\quad\quad\quad\quad\quad\quad\text{for}~ 0\leq s\leq i-m-1,\\
                                 (n+1,n+m-i+1) \rightarrow (n+2,n+t+m-i),\\
                                 (n+2,n+t+m-i) \rightarrow (n+3,n+t+m-i+1);
                    \end{cases}
  \end{eqnarray*}
   the third has the following $n+2$ steps
    \begin{eqnarray*}
                  \begin{cases}   (s,n-i+1) \rightarrow (s+1,n-i+1)\quad \text{for} ~1\leq s\leq n-i, \\
                                 (n-i+1+s,n-i+1+s) \rightarrow (n-i+2+s,n-i+2+s)\quad \text{for}~ 0\leq s\leq m-1,\\
                                 (n+m-i+1+s,n+m-i+1) \rightarrow (n+m-i+2+s,n+m-i+1)\\
                                 \quad\quad\quad\quad\quad\quad\quad\quad\quad\quad\quad\quad\quad\quad\quad\quad\quad\quad\quad\quad\quad\quad\quad\quad\quad\text{for}~ 0\leq s\leq i-m-1,\\
                                 (n+1,n+m-i+1) \rightarrow (n+2,n+t+m-i+1),\\
                                (n+2,n+t+m-i+1) \rightarrow (n+3,n+t+m-i+1);
                    \end{cases}
  \end{eqnarray*}
   the fourth has the following $n+2$ steps
    \begin{eqnarray*}
                  \begin{cases}   (s,n-i+1) \rightarrow (s+1,n-i+1)\quad \text{for} ~1\leq s\leq n-i, \\
                                 (n-i+1+s,n-i+1+s) \rightarrow (n-i+2+s,n-i+2+s)\quad \text{for}~ 0\leq s\leq m,\\
                                 (n+m-i+2+s,n+m-i+2) \rightarrow (n+m-i+3+s,n+m-i+2)\\
                                 \quad\quad\quad\quad\quad\quad\quad\quad\quad\quad\quad\quad\quad\quad\quad\quad\quad\quad\quad\quad\quad\quad\quad\quad\quad\text{for}~ 0\leq s\leq i-m-2,\\
                                 (n+1,n+m-i+2) \rightarrow (n+2,n+t+m-i+1),\\
                                (n+2,n+t+m-i+1) \rightarrow (n+3,n+t+m-i+1).
                    \end{cases}
  \end{eqnarray*}
   In consequence, by such four paths above,  we have
  \begin{eqnarray}\label{eq+four}
  B_{i,i-t-m}&=&\left(\prod\limits_{s=0}^{m-2}b_{i-s}\right)\beta_{i-m+1}\mu_{i-m}+\left(\prod\limits_{s=0}^{m-1}b_{i-s}\right)\alpha_{i-m}\mu_{i-m}+\nonumber\\
  &&\left(\prod\limits_{s=0}^{m-1}b_{i-s}\right)\beta_{i-m}\lambda_{i-m-1} +\left(\prod\limits_{s=0}^{m}b_{i-s}\right)\alpha_{i-m-1}\lambda_{i-m-1}\nonumber\\
  &=&\left(\prod\limits_{s=0}^{m-2}b_{i-s}\right)\beta_{i-m+1}\mu_{i-m}+\left(\prod\limits_{s=0}^{m-1}b_{i-s}\right)\left (\alpha_{i-m}\mu_{i-m}+\beta_{i-m}\lambda_{i-m-1}\right)+\nonumber\\
  &&\left(\prod\limits_{s=0}^{m}b_{i-s}\right)\alpha_{i-m-1}\lambda_{i-m-1}.
  \end{eqnarray}
  By Theorem \ref{thm+tridiag}~(i), (ii) and (iii), we obtain
  \begin{eqnarray}\label{eq+four+condition}
   \begin{cases}
  \alpha_{i-m-1}\lambda_{i-m-1}=a_{i-m-1}^{(0)},\\
  \alpha_{i-m}\mu_{i-m}+\beta_{i-m}\lambda_{i-m-1}=a_{i-m}^{(1)},\\
  \beta_{i-m+1}\mu_{i-m}=a_{i-m+1}^{(2)}.
  \end{cases}
  \end{eqnarray}
  So combining (\ref{eq+four}) and (\ref{eq+four+condition})
  yields
  $$B_{i,i-t-m}=a_{i-m+1}^{(2)}\prod\limits_{s=0}^{m-2}b_{i-s}+a_{i-m}^{(1)}\prod\limits_{s=0}^{m-1}b_{i-s}+a_{i-m-1}^{(0)}\prod\limits_{s=0}^{m}b_{i-s}.$$
  Noting $i-t-m=j$, we derive
  \begin{eqnarray}\label{eq+F514}
  B_{i,j}=a_{j+t+1}^{(2)}\prod\limits_{s=0}^{i-j-t-2}b_{i-s}+a_{j+t}^{(1)}\prod\limits_{s=0}^{i-j-t-1}b_{i-s}+a_{j+t-1}^{(0)}\prod\limits_{s=0}^{i-j-t}b_{i-s}.
  \end{eqnarray}

On the other hand, in terms of the definition of $\widetilde{P}$ and
the formula (\ref{formula+P}), we immediately have
 \begin{eqnarray}\label{eq+combine}
         \widetilde{P}_{i,j} =\begin{cases}   1                                   & \text{for} ~ j=0,i=0;\\
                                 \prod\limits_{s=0}^{i-1}b_{i-s}      & \text{for} ~j=0,i\geq1;\\
                                 0                                    & \text{for} ~ j\geq1,i<j+t-1;\\
                                 \sum\limits_{m\geq0}a_{i-m}^{(i-j-m-t+1)}\prod\limits_{s=0}^{m-1}b_{i-s} & \text{for} ~ j\geq1,j+t-1\leq i< j+t+1;\\
                                 \sum\limits_{m=0}^{2}a_{j+t-m+1}^{(2-m)}\prod\limits_{s=0}^{i+m-j-t-2}b_{i-s} & \text{for} ~j\geq1, i\geq j+t+1.

                    \end{cases}
  \end{eqnarray}
  By simplifying (\ref{eq+combine}), we obtain
  \begin{eqnarray}\label{eq+combine1}
         \widetilde{P}_{i,j} =\begin{cases}   1                                   & \text{for} ~ j=0,i=0;\\
                                 \prod\limits_{s=0}^{i-1}b_{i-s}      & \text{for} ~j=0,i\geq1;\\
                                 0                                    & \text{for} ~ j\geq1,i<j+t-1;\\
                                 a_i^{(0)}                                      & \text{for} ~ j\geq1,i=j+t-1;\\
                                 a_i^{(1)}+a_{i-1}^{(0)}b_i                                     & \text{for} ~ j\geq1,i=j+t;\\
                                 a_{j+t+1}^{(2)}\prod\limits_{s=0}^{i-j-t-2}b_{i-s}+a_{j+t}^{(1)}\prod\limits_{s=0}^{i-j-t-1}b_{i-s}+a_{j+t-1}^{(0)}\prod\limits_{s=0}^{i-j-t}b_{i-s}
                                  & \text{for} ~j\geq1, i\geq j+t+1.
                    \end{cases}
  \end{eqnarray}
Then by (\ref{eq+F511}), (\ref{eq+F512}), (\ref{eq+F513}),
(\ref{eq+F514}) and (\ref{eq+combine1}), we immediately derive
$$ B_n=\widetilde{P}_n.$$
The  proof of Proposition \ref{prop+T1+P} is complete.
\end{proof}
In what follows we will use the weighted digraph $\Gamma_n$ to
construct a weighted digraph for $M$.
\begin{definition}\label{definition+star}
Define recursively a weighted digraph, denoted by
$\Gamma^{\ast}_n=(V^{\ast}_n, \vec{E}^{\ast}_n)$, as follows:
\begin{itemize}
  \item [(i)]
  For $n=1$, we take the weighted digraph $\Gamma_1$ as $\Gamma^{\ast}_1$
  (see Figure~7 for the case $t=1$).
  \begin{center}
  \setlength{\unitlength}{0.85cm}
  \begin{picture}(3,3.2)(2.5,-1.3)
  \thicklines\put(1,1){\line(1,0){1.5}}  \thicklines\put(2.5,1){\line(1,0){1.5}} \thicklines\put(4,1){\line(1,0){1.5}}
  \thicklines\put(1,0){\line(1,0){1.5}}  \thicklines\put(2.5,0){\line(1,0){1.5}}  \thicklines\put(4,0){\line(1,0){1.5}}

  \thicklines\put(1,0){\line(3,2){1.5}}

  \put(1,1){\circle*{0.18}} \put(2.5,1){\circle*{0.18}} \put(4,1){\circle*{0.18}} \put(5.5,1){\circle*{0.18}}
  \put(1,0){\circle*{0.18}} \put(2.5,0){\circle*{0.18}} \put(4,0){\circle*{0.18}} \put(5.5,0){\circle*{0.18}}

  \tiny

  \put(0.47,1.167){$Q_1^{(1)}$} \put(5.64,1.11){$Q_1^{(0)}$}
  \put(0.47,0.167){$Q_0^{(1)}$} \put(5.64,0.11){$Q_0^{(0)}$}

                        \put(1.7,1.1){$1$} \put(3.2,1.1){$1$}        \put(4.7,1.1){$1$}
  \put(1.45,0.6){$b_1$}  \put(1.7,0.1){$1$} \put(3.2,0.1){$\alpha_1$} \put(4.7,0.1){$\lambda_1$}
  \normalsize
  \put(-2.5,-1.2){Figure~7: $\Gamma^{\ast}_1$  for $t=1$. All edges point towards the right.}
  \end{picture}
  \end{center}
  \item [(ii)]
  Suppose that $\Gamma^{\ast}_{n-1}=(V^{\ast}_{n-1}, \vec{E}^{\ast}_{n-1})$ has been constructed for some
  $n\geq2$. Then we shall construct the weighted digraph $\Gamma^{\ast}_n$ as follows:
  Let $\overline{\Gamma}^{\ast}_{n-1}$ is a new weighted digraph obtained from $\Gamma^{\ast}_{n-1}$ by adding vertices
  $Q_n^{(n-1)},\ldots,Q_{n}^{(0)}$ to $V^{\ast}_{n-1}$ and
adding arcs $Q_{n}^{(i+1)} \rightarrow
   Q_n^{(i)}$ with the weight $1$ to $\vec{E}^{\ast}_{n-1}$ $(0\leq i\leq n-2)$.
  Then $\Gamma^{\ast}_n$ is the union of $\Gamma_n$ and $\overline{\Gamma}^{\ast}_{n-1}$  .
\end{itemize}
\end{definition}
By Definition \ref{definition+star}, it is easy to see that
vertices $Q_n^{(n)},Q_{n-1}^{(n)},\ldots,Q_0^{(n)}$~
(resp. $Q_n^{(0)},Q_{n-1}^{(0)},$
$\ldots,Q_0^{(0)}$) are viewed as
the source vertices (resp. the sink vertices) of the weighted digraph
$\Gamma^{\ast}_n$.
In order to understand well how to construct the weighted digraph
$\Gamma^{\ast}_n$, for $t=1$, we draw the weighted digraphs
$\Gamma^{\ast}_2$, $\overline{\Gamma}^{\ast}_2$ and
$\Gamma^{\ast}_3$ (see Figures 8-10), respectively.

\begin{center}
\setlength{\unitlength}{0.85cm}
\begin{picture}(7.4,4.4)(2.5,-1.7)
\thicklines\put(1,2){\line(1,0){1.5}}
\thicklines\put(2.5,2){\line(1,0){1.5}}
\thicklines\put(4,2){\line(1,0){1.5}}
\thicklines\put(1,1){\line(1,0){1.5}}
\thicklines\put(2.5,1){\line(1,0){1.5}}
\thicklines\put(4,1){\line(1,0){1.5}}
\thicklines\put(1,0){\line(1,0){1.5}}
\thicklines\put(2.5,0){\line(1,0){1.5}}
\thicklines\put(4,0){\line(1,0){1.5}}

\thicklines\put(5.5,2){\line(1,0){1.5}}
\thicklines\put(7,2){\line(1,0){1.5}}
\thicklines\put(8.5,2){\line(1,0){1.5}}
\thicklines\put(5.5,1){\line(1,0){1.5}}
\thicklines\put(7,1){\line(1,0){1.5}}
\thicklines\put(8.5,1){\line(1,0){1.5}}
\thicklines\put(5.5,0){\line(1,0){1.5}}
\thicklines\put(7,0){\line(1,0){1.5}}
\thicklines\put(8.5,0){\line(1,0){1.5}}

\thicklines\put(10,2){\line(1,0){1.5}}
\thicklines\put(10,1){\line(1,0){1.5}}
\thicklines\put(10,0){\line(1,0){1.5}}

\thicklines\put(1,0){\line(3,2){1.5}}\thicklines\put(2.5,1){\line(3,2){1.5}}\thicklines\put(4,0){\line(3,2){1.5}}
\thicklines\put(5.5,0){\line(3,2){1.5}}\thicklines\put(7,0){\line(3,2){1.5}}

\put(1,2){\circle*{0.18}} \put(2.5,2){\circle*{0.18}}
\put(4,2){\circle*{0.18}} \put(5.5,2){\circle*{0.18}}
\put(7,2){\circle*{0.18}} \put(1,1){\circle*{0.18}}
\put(2.5,1){\circle*{0.18}} \put(4,1){\circle*{0.18}}
\put(5.5,1){\circle*{0.18}} \put(7,1){\circle*{0.18}}
\put(1,0){\circle*{0.18}} \put(2.5,0){\circle*{0.18}}
\put(4,0){\circle*{0.18}} \put(5.5,0){\circle*{0.18}}
\put(7,0){\circle*{0.18}}
                                                     \put(11.5,2){\circle*{0.18}}
\put(8.5,1){\circle*{0.18}} \put(10,1){\circle*{0.18}}
\put(11.5,1){\circle*{0.18}} \put(8.5,0){\circle*{0.18}}
\put(10,0){\circle*{0.18}} \put(11.5,0){\circle*{0.18}}

\tiny

\put(0.42,2.167){$Q_2^{(2)}$}    \put(7.43,0.6){$b_1$} \put(1.7,2.1){1}
\put(3.2,2.1){1} \put(4.7,2.1){1}
\put(0.42,1.167){$Q_1^{(2)}$}    \put(2.93,1.6){$b_1$} \put(1.7,1.1){1}
\put(3.2,1.1){1}
\put(4.6,1.1){$\alpha_1$}\put(4.4,0.6){$\beta_2$}
\put(0.42,0.167){$Q_0^{(2)}$}  \put(1.43,0.6){$b_2$} \put(1.7,0.1){1}
\put(3.2,0.1){1}  \put(4.6,0.1){$\alpha_2$}

\put(6.1,2.1){$1$}              ¡¢
\put(6.05,1.1){$\lambda_1$}\put(7.7,1.1){1} \put(9.15,2.1){1}
\put(10.65,1.1){$1$}                  \put(11.65,2.11){$Q_2^{(0)}$}
\put(5.9,0.6){$\mu_2$}                 \put(9.15,1.1){1}
\put(11.65,1.11){$Q_1^{(0)}$} \put(6.05,0.1){$\lambda_2$}\put(7.7,0.1){1}
\put(9.15,0.1){$\alpha_1$}\put(10.65,0.1){$\lambda_1$}
\put(11.65,0.11){$Q_0^{(0)}$}
\put(6.65,2.25){$Q_2^{(1)}$} \put(6.65,1.25){$Q_1^{(1)}$}
\put(6.65,0.25){$Q_0^{(1)}$}

\normalsize \put(0.3,-1.3){Figure~8: $\Gamma^{\ast}_2$ for $t=1$. All
edges point towards the right.}
\end{picture}
\end{center}

\begin{center}
\setlength{\unitlength}{0.85cm}
\begin{picture}(7.4,4.8)(2.5,-1.28)

\thicklines\put(1,3){\line(1,0){6}}
\thicklines\put(1,2){\line(1,0){1.5}}
\thicklines\put(2.5,2){\line(1,0){1.5}}
\thicklines\put(4,2){\line(1,0){1.5}}
\thicklines\put(1,1){\line(1,0){1.5}}
\thicklines\put(2.5,1){\line(1,0){1.5}}
\thicklines\put(4,1){\line(1,0){1.5}}
\thicklines\put(1,0){\line(1,0){1.5}}
\thicklines\put(2.5,0){\line(1,0){1.5}}
\thicklines\put(4,0){\line(1,0){1.5}}

                                         \thicklines\put(7,3){\line(1,0){4.5}}
\thicklines\put(5.5,2){\line(1,0){1.5}}
\thicklines\put(7,2){\line(1,0){1.5}}
\thicklines\put(8.5,2){\line(1,0){1.5}}
\thicklines\put(5.5,1){\line(1,0){1.5}}
\thicklines\put(7,1){\line(1,0){1.5}}
\thicklines\put(8.5,1){\line(1,0){1.5}}
\thicklines\put(5.5,0){\line(1,0){1.5}}
\thicklines\put(7,0){\line(1,0){1.5}}
\thicklines\put(8.5,0){\line(1,0){1.5}}

\thicklines\put(10,2){\line(1,0){1.5}}
\thicklines\put(10,1){\line(1,0){1.5}}
\thicklines\put(10,0){\line(1,0){1.5}}

\thicklines\put(1,0){\line(3,2){1.5}}\thicklines\put(2.5,1){\line(3,2){1.5}}\thicklines\put(4,0){\line(3,2){1.5}}
\thicklines\put(5.5,0){\line(3,2){1.5}}\thicklines\put(7,0){\line(3,2){1.5}}

\put(1,3){\circle*{0.18}}
\put(7,3){\circle*{0.18}} \put(1,2){\circle*{0.18}}
\put(2.5,2){\circle*{0.18}} \put(4,2){\circle*{0.18}}
\put(5.5,2){\circle*{0.18}} \put(7,2){\circle*{0.18}}
\put(1,1){\circle*{0.18}} \put(2.5,1){\circle*{0.18}}
\put(4,1){\circle*{0.18}} \put(5.5,1){\circle*{0.18}}
\put(7,1){\circle*{0.18}} \put(1,0){\circle*{0.18}}
\put(2.5,0){\circle*{0.18}} \put(4,0){\circle*{0.18}}
\put(5.5,0){\circle*{0.18}} \put(7,0){\circle*{0.18}}
                           \put(11.5,3){\circle*{0.18}}                           \put(11.5,2){\circle*{0.18}}
\put(8.5,1){\circle*{0.18}} \put(10,1){\circle*{0.18}}
\put(11.5,1){\circle*{0.18}} \put(8.5,0){\circle*{0.18}}
\put(10,0){\circle*{0.18}} \put(11.5,0){\circle*{0.18}}

\tiny \put(0.42,3.167){$Q_3^{(2)}$}
\put(4,3.1){1} \put(0.42,2.167){$Q_2^{(2)}$}    \put(7.43,0.6){$b_1$}
\put(1.7,2.1){1} \put(3.2,2.1){1} \put(4.7,2.1){1}
\put(0.42,1.167){$Q_1^{(2)}$}    \put(2.93,1.6){$b_1$} \put(1.7,1.1){1}
\put(3.2,1.1){1}
\put(4.6,1.1){$\alpha_1$}\put(4.4,0.6){$\beta_2$}
\put(0.42,0.167){$Q_0^{(2)}$}  \put(1.43,0.6){$b_2$} \put(1.7,0.1){1}
\put(3.2,0.1){1}  \put(4.6,0.1){$\alpha_2$}

\put(6.1,2.1){$1$}   \put(9.15,3.1){$1$}              ¡¢
\put(6.05,1.1){$\lambda_1$}\put(7.7,1.1){1} \put(9.15,2.1){1}
\put(10.65,1.1){$1$}                  \put(11.65,2.11){$Q_2^{(0)}$}
\put(5.9,0.6){$\mu_2$}                 \put(9.15,1.1){1}
\put(11.65,1.11){$Q_1^{(0)}$} \put(6.05,0.1){$\lambda_2$}\put(7.7,0.1){1}
\put(9.15,0.1){$\alpha_1$}\put(10.65,0.1){$\lambda_1$}
\put(11.65,0.11){$Q_0^{(0)}$} \put(11.65,3.11){$Q_3^{(0)}$}
\put(6.65,2.25){$Q_2^{(1)}$} \put(6.65,1.25){$Q_1^{(1)}$}
\put(6.65,0.25){$Q_0^{(1)}$}\put(6.65,3.25){$Q_3^{(1)}$}

\normalsize \put(0.3,-1.3){Figure~9: $\overline{\Gamma}^{\ast}_2$ for
$t=1$. All edges point towards the right.}
\end{picture}
\end{center}

\begin{center}
\setlength{\unitlength}{0.83cm}
\begin{picture}(4.1,5.4)(0.5,-1.4)
\thicklines\put(-6.5,3){\line(1,0){1.5}}\thicklines\put(-5,3){\line(1,0){1.5}}
\thicklines\put(-6.5,2){\line(1,0){1.5}}\thicklines\put(-5,2){\line(1,0){1.5}}
\thicklines\put(-6.5,1){\line(1,0){1.5}}\thicklines\put(-5,1){\line(1,0){1.5}}
\thicklines\put(-6.5,0){\line(1,0){1.5}}\thicklines\put(-5,0){\line(1,0){1.5}}

\thicklines\put(-3.5,3){\line(1,0){1.5}}\thicklines\put(-2,3){\line(1,0){1.5}}\thicklines\put(-0.5,3){\line(1,0){1.5}}
\thicklines\put(-3.5,2){\line(1,0){1.5}}\thicklines\put(-2,2){\line(1,0){1.5}}\thicklines\put(-0.5,2){\line(1,0){1.5}}
\thicklines\put(-3.5,1){\line(1,0){1.5}}\thicklines\put(-2,1){\line(1,0){1.5}}\thicklines\put(-0.5,1){\line(1,0){1.5}}
\thicklines\put(-3.5,0){\line(1,0){1.5}}\thicklines\put(-2,0){\line(1,0){1.5}}\thicklines\put(-0.5,0){\line(1,0){1.5}}

\thicklines\put(1,3){\line(1,0){6}}
\thicklines\put(1,2){\line(1,0){1.5}}
\thicklines\put(2.5,2){\line(1,0){1.5}}
\thicklines\put(4,2){\line(1,0){1.5}}
\thicklines\put(1,1){\line(1,0){1.5}}
\thicklines\put(2.5,1){\line(1,0){1.5}}
\thicklines\put(4,1){\line(1,0){1.5}}
\thicklines\put(1,0){\line(1,0){1.5}}
\thicklines\put(2.5,0){\line(1,0){1.5}}
\thicklines\put(4,0){\line(1,0){1.5}}

                                         \thicklines\put(7,3){\line(1,0){4.5}}
\thicklines\put(5.5,2){\line(1,0){1.5}}
\thicklines\put(7,2){\line(1,0){1.5}}
\thicklines\put(8.5,2){\line(1,0){1.5}}
\thicklines\put(5.5,1){\line(1,0){1.5}}
\thicklines\put(7,1){\line(1,0){1.5}}
\thicklines\put(8.5,1){\line(1,0){1.5}}
\thicklines\put(5.5,0){\line(1,0){1.5}}
\thicklines\put(7,0){\line(1,0){1.5}}
\thicklines\put(8.5,0){\line(1,0){1.5}}

\thicklines\put(10,2){\line(1,0){1.5}}
\thicklines\put(10,1){\line(1,0){1.5}}
\thicklines\put(10,0){\line(1,0){1.5}}

\thicklines\put(-6.5,0){\line(3,2){1.5}}\thicklines\put(-5,1){\line(3,2){1.5}}\thicklines\put(-3.5,2){\line(3,2){1.5}}
\thicklines\put(-2,0){\line(3,2){1.5}}\thicklines\put(-2,1){\line(3,2){1.5}}\thicklines\put(-0.5,1){\line(3,2){1.5}}
\thicklines\put(1,0){\line(3,2){1.5}}\thicklines\put(2.5,1){\line(3,2){1.5}}\thicklines\put(4,0){\line(3,2){1.5}}
\thicklines\put(5.5,0){\line(3,2){1.5}}\thicklines\put(7,0){\line(3,2){1.5}}\thicklines\put(-0.5,0){\line(3,2){1.5}}

\put(1,3){\circle*{0.18}}
\put(7,3){\circle*{0.18}} \put(1,2){\circle*{0.18}}
\put(2.5,2){\circle*{0.18}} \put(4,2){\circle*{0.18}}
\put(5.5,2){\circle*{0.18}} \put(7,2){\circle*{0.18}}
\put(1,1){\circle*{0.18}} \put(2.5,1){\circle*{0.18}}
\put(4,1){\circle*{0.18}} \put(5.5,1){\circle*{0.18}}
\put(7,1){\circle*{0.18}} \put(1,0){\circle*{0.18}}
\put(2.5,0){\circle*{0.18}} \put(4,0){\circle*{0.18}}
\put(5.5,0){\circle*{0.18}} \put(7,0){\circle*{0.18}}
                           \put(11.5,3){\circle*{0.18}}                           \put(11.5,2){\circle*{0.18}}
\put(8.5,1){\circle*{0.18}} \put(10,1){\circle*{0.18}}
\put(11.5,1){\circle*{0.18}} \put(8.5,0){\circle*{0.18}}
\put(10,0){\circle*{0.18}} \put(11.5,0){\circle*{0.18}}

\put(-0.5,3){\circle*{0.18}}\put(-2,3){\circle*{0.18}}\put(-3.5,3){\circle*{0.18}}\put(-5,3){\circle*{0.18}}\put(-6.5,3){\circle*{0.18}}
\put(-0.5,2){\circle*{0.18}}\put(-2,2){\circle*{0.18}}\put(-3.5,2){\circle*{0.18}}\put(-5,2){\circle*{0.18}}\put(-6.5,2){\circle*{0.18}}
\put(-0.5,1){\circle*{0.18}}\put(-2,1){\circle*{0.18}}\put(-3.5,1){\circle*{0.18}}\put(-5,1){\circle*{0.18}}\put(-6.5,1){\circle*{0.18}}
\put(-0.5,0){\circle*{0.18}} \put(-2,0){\circle*{0.18}}
\put(-3.5,0){\circle*{0.18}} \put(-5,0){\circle*{0.18}}
\put(-6.5,0){\circle*{0.18}} \tiny \put(0.6,3.32){$Q_3^{(2)}$}
\put(4,3.1){1} \put(0.6,2.32){$Q_2^{(2)}$}    \put(7.4,0.6){$b_1$}
\put(1.7,2.1){1} \put(3.2,2.1){1} \put(4.7,2.1){1}
\put(0.6,1.32){$Q_1^{(2)}$}    \put(2.9,1.6){$b_1$} \put(1.7,1.1){1}
\put(3.2,1.1){1}
\put(4.6,1.1){$\alpha_1$}\put(4.3,0.6){$\beta_2$}
\put(0.6,0.32){$Q_0^{(2)}$}  \put(1.4,0.6){$b_2$} \put(1.7,0.1){1}
\put(3.2,0.1){1}  \put(4.6,0.1){$\alpha_2$}

\put(6.1,2.1){$1$}                 \put(9.2,3.1){1}
\put(11.685,3.12){$Q_3^{(0)}$} \put(6.1,1.1){$\lambda_1$}\put(7.7,1.1){1}
\put(9.2,2.1){1}         \put(10.7,1.1){$1$}
\put(11.685,2.12){$Q_2^{(0)}$} \put(5.8,0.6){$\mu_2$}
\put(9.2,1.1){1}
\put(11.685,1.12){$Q_1^{(0)}$} \put(6.1,0.1){$\lambda_2$}\put(7.7,0.1){1}
\put(9.2,0.1){$\alpha_1$}\put(10.7,0.1){$\lambda_1$}
\put(11.685,0.12){$Q_0^{(0)}$}
\put(6.6,3.32){$Q_3^{(1)}$}\put(6.6,2.32){$Q_2^{(1)}$}
\put(6.6,1.32){$Q_1^{(1)}$} \put(6.6,0.32){$Q_0^{(1)}$}

                                                                                         \put(-1.3,3.1){$1$}
 \put(-7.1,3.185){$Q_3^{(3)}$}   \put(-5.8,3.1){$1$} \put(-4.3,3.1){$1$}\put(-2.8,3.1){$1$}
 \put(-7.1,2.185){$Q_2^{(3)}$}   \put(-5.8,2.1){$1$} \put(-4.3,2.1){$1$}\put(-2.8,2.1){$1$}\put(-3.1,2.6){$b_1$}
 \put(-7.1,1.185){$Q_1^{(3)}$}   \put(-5.8,1.1){$1$} \put(-4.3,1.1){$1$}\put(-2.8,1.1){$1$}\put(-4.6,1.6){$b_2$}
 \put(-7.1,0.185){$Q_0^{(3)}$}   \put(-5.8,0.1){$1$} \put(-4.3,0.1){$1$}\put(-2.8,0.1){$1$}\put(-6.1,0.6){$b_3$}

                                                      \put(0.2,3.1){$1$}
\put(-1.4,2.1){$\alpha_1$}
                                                   \put(0.1,2.1){$\lambda_1$}
\put(-1.4,1.1){$\alpha_2$}\put(-1.7,1.6){$\beta_2$}\put(0.1,1.1){$\lambda_2$}\put(-0.2,1.6){$\mu_2$}
\put(-1.4,0.1){$\alpha_3$}\put(-1.7,0.6){$\beta_3$}\put(0.1,0.1){$\lambda_3$}\put(-0.2,0.6){$\mu_3$}

\normalsize \put(-3.5,-1.3){Figure~10: $\Gamma^{\ast}_3$ for $t=1$. All
edges point towards the right.}
\end{picture}
\end{center}

Let $\overline{B}^{\ast}(n-1)$ be the walk matrix of the weighted
digraph $\overline{\Gamma}^{\ast}_{n-1}$. Then it is easy to
observe
$$\overline{B}^{\ast}(n-1)=\left(\begin{array}{cc}
                                                               1&0\\
                                                               0&{M}_{n-1}\\
                                 \end{array}
                            \right).$$
Applying the transfer-matrix method (see \cite[Theorem
4.7.1]{Stanley12} for instance) to  equality (\ref{A}), we obtain
the following result.
\begin{prop}\label{prop+T1+M}
The walk matrix $B^{\ast}(n)$ for the weighted digraph
$\Gamma^{\ast}_n$ equals the matrix $M_n$.
\end{prop}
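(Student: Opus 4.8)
The plan is to prove Proposition~\ref{prop+T1+M} by induction on $n$, pairing the recursive construction of $\Gamma^{\ast}_n$ in Definition~\ref{definition+star} with the matrix factorization~(\ref{A}) and Proposition~\ref{prop+T1+P}.

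First I would dispose of the base case $n=1$: by Definition~\ref{definition+star}(i) we have $\Gamma^{\ast}_1=\Gamma_1$, so $B^{\ast}(1)=B(1)=\widetilde{P}_1$ by Proposition~\ref{prop+T1+P}, and since $M_0=(1)$, formula~(\ref{A}) reads $M_1=\widetilde{P}_1$; hence $B^{\ast}(1)=M_1$. Next, assuming $B^{\ast}(n-1)=M_{n-1}$, I would record the ``easy observation'' noted just before the statement: $\overline{\Gamma}^{\ast}_{n-1}$ is obtained from $\Gamma^{\ast}_{n-1}$ by adjoining the vertices $Q_n^{(n-1)},\dots,Q_n^{(0)}$ and the weight-$1$ arcs $Q_n^{(i+1)}\to Q_n^{(i)}$ $(0\le i\le n-2)$, so the only walk leaving the new source $Q_n^{(n-1)}$ is the weight-$1$ path $Q_n^{(n-1)}\to\cdots\to Q_n^{(0)}$, and no walk from a source $Q_i^{(n-1)}$ with $i\le n-1$ can reach $Q_n^{(0)}$; therefore the walk matrix of $\overline{\Gamma}^{\ast}_{n-1}$ is block-diagonal, equal to $\left(\begin{array}{cc}1&0\\0&B^{\ast}(n-1)\end{array}\right)=\left(\begin{array}{cc}1&0\\0&M_{n-1}\end{array}\right)$ by the induction hypothesis.

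The core step is the walk decomposition in $\Gamma^{\ast}_n$. By Definition~\ref{definition+star}(ii), $\Gamma^{\ast}_n$ is the union of $\Gamma_n$ and $\overline{\Gamma}^{\ast}_{n-1}$, and these two digraphs share exactly the ``interface'' vertices $Q_\ell^{(n-1)}$, which are the sinks of $\Gamma_n$ and are among the sources of $\overline{\Gamma}^{\ast}_{n-1}$; no arc of either piece bypasses this column, and both pieces (hence their union) are acyclic. Consequently, every walk from a source $Q_i^{(n)}$ of $\Gamma^{\ast}_n$ to a sink $Q_j^{(0)}$ passes through exactly one vertex $Q_\ell^{(n-1)}$ and factors uniquely as a walk in $\Gamma_n$ from $Q_i^{(n)}$ to $Q_\ell^{(n-1)}$ followed by a walk in $\overline{\Gamma}^{\ast}_{n-1}$ from $Q_\ell^{(n-1)}$ to $Q_j^{(0)}$, with multiplicative weights. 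This is exactly the hypothesis of the transfer-matrix method (\cite[Theorem~4.7.1]{Stanley12}), giving $B^{\ast}(n)=B(n)\,\overline{B}^{\ast}(n-1)$. Combining this with Proposition~\ref{prop+T1+P} and the block form just established yields $B^{\ast}(n)=\widetilde{P}_n\left(\begin{array}{cc}1&0\\0&M_{n-1}\end{array}\right)$, which equals $M_n$ by~(\ref{A}), completing the induction.

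I expect the one genuinely delicate point to be the clean-gluing verification in the core step: reading off Definitions~\ref{def+gamma+TG1} and~\ref{definition+star} that $\Gamma_n$ and $\overline{\Gamma}^{\ast}_{n-1}$ overlap only in the column $\{Q_\ell^{(n-1)}\}$, that no arc jumps across it, and that no directed cycle is created, so that the concatenation identity for walk matrices applies verbatim. Everything else --- the base case, the block-diagonal shape of $\overline{B}^{\ast}(n-1)$, and the final matrix identity via~(\ref{A}) --- is then routine.
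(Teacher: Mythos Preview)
Your proposal is correct and follows essentially the same approach as the paper: the paper's proof is a terse one-liner that records the block form $\overline{B}^{\ast}(n-1)=\left(\begin{smallmatrix}1&0\\0&M_{n-1}\end{smallmatrix}\right)$ and then invokes the transfer-matrix method \cite[Theorem~4.7.1]{Stanley12} together with~(\ref{A}), while you spell out the induction, the base case, and the clean-gluing verification explicitly.
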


Finally, using the weighted digraph $\Gamma^{*}_n$, we shall
construct a weighted digraph for the Toeplitz matrix of the each row
sequence of $M$.

\begin{definition}\label{def+gamma+TG+TM+trid}
Define a weighted digraph, denoted by
$\Gamma^{\diamond}_k=(V^{\diamond}_k, \vec{E}^{\diamond}_k)$, as
follows: Let $\Gamma^{\diamond}_{k}$ denote the subgraph of
$\Gamma^{\ast}_{n+k}$ induced by the union of arcs of all directed
paths from $Q_{i}^{(n+i)}$ to $Q_{n+j}^{(0)} ~(0\leq i,j\leq k)$,
where vertices $Q_{0}^{(n)},Q_{1}^{(n+1)},\ldots,Q_{k}^{(n+k)}$
(resp. $Q_{n}^{(0)}, Q_{n+1}^{(0)},\ldots,Q_{n+k}^{(0)}$) are viewed
as the source vertices (resp. the sink vertices) of
$\Gamma^{\diamond}_k$.
\end{definition}

For $t=1,n=1,k=2$, we draw $\Gamma^{\diamond}_2$ in Figure $11$.

\begin{center}
\setlength{\unitlength}{0.83cm}
\begin{picture}(3.57,5)(0.8,-1.3)

\thicklines\put(-6.5,2){\line(1,0){1.5}}\thicklines\put(-5,2){\line(1,0){1.5}}

                                        \thicklines\put(-2,3){\line(1,0){1.5}}
\thicklines\put(-3.5,2){\line(1,0){1.5}}\thicklines\put(-2,2){\line(1,0){1.5}}

\thicklines\put(1,3){\line(1,0){6}}    \thicklines\put(-0.5,3){\line(1,0){1.5}}
\thicklines\put(1,2){\line(1,0){1.5}}  \thicklines\put(-0.5,2){\line(1,0){1.5}}
\thicklines\put(2.5,2){\line(1,0){1.5}} \thicklines\put(7,3){\line(1,0){4.5}}
\thicklines\put(4,2){\line(1,0){1.5}}\thicklines\put(1,1){\line(1,0){1.5}}
\thicklines\put(2.5,1){\line(1,0){1.5}}\thicklines\put(4,1){\line(1,0){1.5}}

\thicklines\put(5.5,2){\line(1,0){1.5}}
\thicklines\put(7,2){\line(1,0){1.5}}\thicklines\put(8.5,2){\line(1,0){1.5}}
\thicklines\put(5.5,1){\line(1,0){1.5}}\thicklines\put(7,1){\line(1,0){1.5}}
\thicklines\put(8.5,1){\line(1,0){1.5}}\thicklines\put(10,2){\line(1,0){1.5}}
\thicklines\put(10,1){\line(1,0){1.5}}

\thicklines\put(2.5,1){\line(3,2){1.5}}\thicklines\put(-3.5,2){\line(3,2){1.5}}
\thicklines\put(7,0){\line(3,2){1.5}}

\put(1,3){\circle*{0.18}}
\put(7,3){\circle*{0.18}} \put(1,2){\circle*{0.18}}
\put(2.5,2){\circle*{0.18}} \put(4,2){\circle*{0.18}}
\put(5.5,2){\circle*{0.18}} \put(7,2){\circle*{0.18}}
\put(1,1){\circle*{0.18}} \put(2.5,1){\circle*{0.18}}
\put(4,1){\circle*{0.18}} \put(5.5,1){\circle*{0.18}}
\put(7,1){\circle*{0.18}} \put(7,0){\circle*{0.18}}\put(11.5,3){\circle*{0.18}}
\put(8.5,1){\circle*{0.18}} \put(10,1){\circle*{0.18}}
\put(11.5,1){\circle*{0.18}}  \put(11.5,2){\circle*{0.18}}

\put(-0.5,3){\circle*{0.18}}\put(-2,3){\circle*{0.18}}
\put(-0.5,2){\circle*{0.18}}\put(-2,2){\circle*{0.18}}
\put(-3.5,2){\circle*{0.18}}\put(-5,2){\circle*{0.18}}\put(-6.5,2){\circle*{0.18}}

 \tiny \put(0.6,3.32){$Q_3^{(2)}$}
\put(4,3.1){1} \put(0.6,2.32){$Q_2^{(2)}$}    \put(7.4,0.6){$b_1$}
\put(1.7,2.1){1} \put(3.2,2.1){1} \put(4.7,2.1){1}
\put(0.6,1.32){$Q_1^{(2)}$}    \put(2.9,1.6){$b_1$} \put(1.7,1.1){1}
\put(3.2,1.1){1}\put(4.6,1.1){$\alpha_1$}

\put(6.1,2.1){$1$} \put(9.2,3.1){1}
\put(11.685,3.12){$Q_3^{(0)}$} \put(6.1,1.1){$\lambda_1$}\put(7.7,1.1){1}
\put(9.2,2.1){1}         \put(10.7,1.1){$1$}
\put(11.685,2.12){$Q_2^{(0)}$} \put(9.2,1.1){1}
\put(11.685,1.12){$Q_1^{(0)}$} \put(6.6,3.32){$Q_3^{(1)}$}\put(6.6,2.32){$Q_2^{(1)}$}
\put(6.6,1.32){$Q_1^{(1)}$} \put(6.6,0.32){$Q_0^{(1)}$}

\put(-1.3,3.1){$1$} \put(-7.1,2.185){$Q_2^{(3)}$}  \put(-5.8,2.1){$1$}
\put(-4.3,2.1){$1$}\put(-2.8,2.1){$1$}\put(-3.1,2.6){$b_1$}  \put(0.2,3.1){$1$}
\put(-1.4,2.1){$\alpha_1$}\put(0.1,2.1){$\lambda_1$}

\normalsize \put(-5,-1.3){Figure~11: $\Gamma^{\diamond}_2$ for
$t=1,n=1,k=2$. All edges point towards the right.}
\end{picture}
\end{center}
We denote by $B^\diamond(k)$ the walk matrix of $\Gamma^{\diamond}_k$. The
following consequence indicates the relation between $B^\diamond(k)$
and the Toeplitz matrix of the $n$th row sequence of $M$.

\begin{prop}\label{prop+T+TG+trid}
For the Toeplitz matrix of the $n$th row sequence of $M$, its $k$th
order leading principal minor equals the walk matrix
$B^{\diamond}(k)$ for the weighted digraph $\Gamma^{\diamond}_k$.
\end{prop}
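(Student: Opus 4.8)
The plan is to identify the walk matrix $B^{\diamond}(k)$ of the weighted digraph $\Gamma^{\diamond}_k$ with the $k$th-order leading principal submatrix of the Toeplitz matrix $[M_{n,n+j-i}]_{i,j}$ of the $n$th row of $M$ — in other words, to show $B^{\diamond}(k)_{i,j} = M_{n,\,n+j-i}$ for $0\le i,j\le k$ (with the convention that the source $Q_i^{(n+i)}$ indexes row $i$ and the sink $Q_{n+j}^{(0)}$ indexes column $j$). The key point is that by Definition \ref{def+gamma+TG+TM+trid}, $\Gamma^{\diamond}_k$ is a subgraph of $\Gamma^{\ast}_{n+k}$, so a directed path from $Q_i^{(n+i)}$ to $Q_{n+j}^{(0)}$ in $\Gamma^{\diamond}_k$ is the same thing as such a path in $\Gamma^{\ast}_{n+k}$, and carries the same weight. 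Hence $B^{\diamond}(k)_{i,j}$ equals the $(Q_i^{(n+i)},Q_{n+j}^{(0)})$-entry of the full walk matrix $B^{\ast}(n+k)$ of $\Gamma^{\ast}_{n+k}$.

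First I would invoke Proposition \ref{prop+T1+M}, which gives $B^{\ast}(m)=M_m$ for every $m$; applying it with $m=n+k$ identifies the relevant entry of $B^{\ast}(n+k)$ with an entry of $M_{n+k}$, the $(n+k)$th leading principal submatrix of $M$. It remains to track the indexing: in the construction of $\Gamma^{\ast}_{n+k}$ (Definition \ref{definition+star}), the source $Q_i^{(n+i)}$ is the vertex that, inside the copy of $\Gamma_{n+i}$ glued in at stage $n+i$, plays the role of source row $n+i-i = $ ``height index'' corresponding to the matrix row $n$; the unit-weight horizontal arcs $Q_{m}^{(p+1)}\to Q_m^{(p)}$ added at each subsequent stage simply propagate the value of $M_{n, \cdot}$ rightward without change. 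Chasing the labels through Definition \ref{definition+star} shows that the total weight of all paths $Q_i^{(n+i)}\to Q_{n+j}^{(0)}$ is exactly $M_{n,\,n+j-i}$, which is the $(i,j)$-entry of the Toeplitz matrix $\mathcal{T}$ of the $n$th row, truncated to its $k$th leading principal submatrix.

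The main obstacle will be the bookkeeping of which vertex of $\Gamma^{\ast}_{n+k}$ corresponds to which index of the Toeplitz matrix: one must verify carefully that the restriction to arcs lying on paths $Q_i^{(n+i)}\to Q_{n+j}^{(0)}$ does not accidentally discard any path that contributes to $M_{n,n+j-i}$, and conversely that every surviving path in $\Gamma^{\diamond}_k$ really does begin at one of the prescribed sources and end at one of the prescribed sinks. This is the kind of ``topologically obvious but index-heavy'' verification that the small worked example in Figure~11 is meant to make transparent; once the correspondence of labels is pinned down, the statement follows immediately by combining the subgraph observation with Proposition \ref{prop+T1+M}. (Note that $\Gamma^{\diamond}_k$ is by construction a planar acyclic digraph with its sources and sinks arranged on the boundary in the order required by Lemma \ref{lemma.nonpermutable}, so this identification is precisely what is needed to feed into Corollary \ref{cor.gessel-viennot.totpos} in the next step of the paper; but that application is not part of the present statement.)
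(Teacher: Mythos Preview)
Your approach is essentially the paper's: reduce $B^{\diamond}(k)_{i,j}$ to a path count inside the relevant copy $\Gamma^{\ast}_{n+i}\subset\Gamma^{\ast}_{n+k}$ via the subgraph observation, then invoke Proposition~\ref{prop+T1+M}. However, there is a systematic indexing error. The $(i,j)$-entry of the Toeplitz matrix of the $n$th row sequence $(M_{n,0},M_{n,1},\ldots)$ is $M_{n,i-j}$, not $M_{n,n+j-i}$; correspondingly, the total weight of directed paths from $Q_i^{(n+i)}$ to $Q_{n+j}^{(0)}$ is $M_{n,i-j}$. To see this, apply Proposition~\ref{prop+T1+M} to $\Gamma^{\ast}_{n+i}$: under the paper's convention the source $Q_i^{(n+i)}$ corresponds to row index $(n+i)-i=n$ and the sink $Q_{n+j}^{(0)}$ to column index $(n+i)-(n+j)=i-j$, giving entry $M_{n,i-j}$. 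With this correction your argument matches the paper's.

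One further remark: invoking Proposition~\ref{prop+T1+M} ``with $m=n+k$'' does not literally give the desired path weight, since $Q_i^{(n+i)}$ is not a source vertex of $\Gamma^{\ast}_{n+k}$ for $i<k$ (it is an interior vertex). The clean route---which you effectively adopt in your second paragraph---is to apply Proposition~\ref{prop+T1+M} with $m=n+i$, where $Q_i^{(n+i)}$ \emph{is} a source, and then observe (as the paper does) that $\Gamma^{\ast}_{n+i}$ and $\Gamma^{\diamond}_k$, viewed as subgraphs of $\Gamma^{\ast}_{n+k}$, share exactly the same set of walks from $Q_i^{(n+i)}$ to $Q_{n+j}^{(0)}$.
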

\textbf{Proof:} Note that the $(i,j)$ element of the Toeplitz matrix
of the $n$th row sequence of $M$ equals $M_{n,i-j}$. So, taking the
vertex $Q_{i}^{(n+i)}$ in ${V}^{\diamond}_{k}$ as the row index $i$
and $Q_{n+j}^{(0)}$ as the column index $j$ of the walk matrix
$B^{\diamond}(k)$, we just need to prove
$$ M_{n,i-j}=B_{i,j}^{\diamond}.$$

By Proposition \ref{prop+T1+M}, for the walk matrix $B^{\ast}(k)$,
taking the vertex $Q_{i}^{(n)}$ in ${V}^{\ast}_{n}$ as the row index
$i$ and $Q_{n-i+j}^{(0)}$ in ${V}^{\ast}_{n}$ as the column index
$j$, we have
\begin{equation}\label{t++}
  M_{n-i,i-j}=B_{n-i,i-j}^{\ast}.
\end{equation}
Replacing $n$ with $n+i$ in (\ref{t++}) yields
$$M_{n,i-j}=B_{n,i-j}^{\ast},$$
which is the total weight of walks from $Q_{i}^{(n+i)}$ in
${V}^{\ast}_{n+i}$ to $Q_{n+j}^{(0)}$ in ${V}^{\ast}_{n+i}$. Observe
that both $\Gamma^{\diamond}_{k}$ and $\Gamma^{\ast}_{n+i}$ are
subgraphs of $\Gamma^{\ast}_{n+k}$ and have the same set of walks
from $Q_{i}^{(n+i)}$ to $Q_{n+j}^{(0)}$. Therefore, we obtain
$$B_{i,j}^{\diamond}=B_{n,i-j}^{\ast}.$$
This completes the  proof of Proposition \ref{prop+T+TG+trid}.\qed\\

\subsection {Proof of  Theorem \ref{thm+tridiag}. }

  ~~~~~~~(i) Fix $n\geq0$. For any $m\times m$ minor $M_{I,J}$ of the matrix $M_{n}$, where $I=(i_1,\ldots,i_m)$ with $0\leq i_1< \cdots < i_m\leq n$ and $J=(j_1,\ldots,j_m)$ with $0\leq j_1<\cdots< j_m\leq n$,
 we will construct the corresponding weighted digraph.
  Let ${\Gamma^{\ast}_{I,J}}$ be the subgraph of
  $\Gamma^{\ast}_{n}$ induced by  the union of arcs of all directed paths from
  $Q_{n-i_s}^{(n)}$ to $Q_{n-j_k}^{(0)}$ $(1\leq s,k\leq m)$. By  Proposition \ref{prop+T1+M}
  we immediately have
  $$B_{I,J}^{\ast}=M_{I,J}.$$
  And it is obvious that
  $(Q_{n-i_1}^{(n)},Q_{n-i_2}^{(n)},\ldots,Q_{n-i_m}^{(n)})$ and
  $(Q_{n-j_1}^{(0)},Q_{n-j_2}^{(0)},\ldots,Q_{n-j_m}^{(0)})$ form a fully nonpermutable pair.
  In consequence, by  Corollary \ref{cor.gessel-viennot.totpos}, we establish the $(\textbf{x},\textbf{y})$-total positivity
  of the matrix $M$.

   (ii) Let $\mathcal{T}$ be the Toeplitz matrix of the $n$th row sequence of $M$. Fix $k\geq0$.
   For any $m\times m$ minor $\mathcal{T}_{I,J}$ of $\mathcal{T}_{k}$, where $I=(i_1,\ldots,i_m)$ with $0\leq i_1< \cdots < i_m\leq k$
   and $J=(j_1,\ldots,j_m)$ with $0\leq j_1<\cdots< j_m\leq k$, we will construct the corresponding weighted digraph.
   Let $\Gamma^{\diamond}_{I,J}$ be the subgraph of
  $\Gamma^{\diamond}_{k}$ induced by the union of arcs of all directed paths from
  $Q_{i_s}^{(n+i_s)}$ to $Q_{n+j_r}^{(0)}$ $(1\leq s,r\leq m)$. By Proposition \ref{prop+T+TG+trid},
  we immediately have
  $${B_{I,J}^{\diamond}}=\mathcal{T}_{I,J}.$$  Combining Lemma
  \ref{lemma.nonpermutable} and
  Corollary \ref{cor.gessel-viennot.totpos} gives the $(\textbf{x},\textbf{y})$-total positivity for the Toeplitz matrix.

This completes the proof of Theorem \ref{thm+tridiag}.\qed

\section{The proof of Theorem \ref{thm+fun+ell}}\label{section+Proof+Thm+full+ell}
In this section, we shall give the proof of Theorem
\ref{thm+fun+ell}.

\textbf{Proof of Theorem \ref{thm+fun+ell}:}

(i) In terms of the assumption that the weight $a^{(i)}_n$ in
lattice paths is independent of $n$,
let $\textsl{a}_i:=a^{(i)}_n$
for $0\leq i\leq \ell$.  In consequence, we have
 $$\sum_{i= 0}^{\ell}\textsl{a}_iz^i=\prod_{j= 1}^{\ell}(\alpha_jz+\beta_j)$$
 and
 $$\mathcal{A}=\left(
                   \begin{array}{ccccc}
                                 \textsl{a}_0 \\
                                \textsl{a}_1& \textsl{a}_0 \\
                               \textsl{a}_2& \textsl{a}_1 & \textsl{a}_0\\
                                \textsl{a}_3& \textsl{a}_2 & \textsl{a}_1 & \textsl{a}_0 \\
                                \vdots & \vdots & \vdots & \vdots & \ddots\\
                  \end{array}
   \right),$$
  which is exactly the Toeplitz matrix of the sequence $(\textsl{a}_i)_{i=0}^{\ell}$.

  Let
   $$W_i
    :=\left(
               \begin{array}{ccccccc}
                                  \beta_i&&&&&& \\
                                  \alpha_i&\beta_i \\
                                  &\alpha_i&\beta_i\\
                                  &&\alpha_i&\beta_i\\
                                  &&&\ddots&\ddots\\
               \end{array}
            \right).$$
  Therefore, we derive
  \begin{equation}\label{Wi}
    \mathcal{A}=\prod\limits_{i=1}^{\ell}W_i.
  \end{equation}
Obviously, in terms of the classical Cauchy-Binet formula,
$\mathcal{A}$ is $(\balpha,\bbeta)$-totally positive. So (i) is immediate by
Theorem \ref{thm+main} (ii).

(ii) and (iv)
  Let $h_k(z)=\sum_{n}M_{n,k}z^n$ be the $k$th column-generating function of $[M_{n,k}]_{n,k}$.
  Notice that $M_{0,0}=1$ and $M_{n,0}={\gamma}^{n}$. So $$h_0(z)=\frac{1}{1-\gamma
  z}.$$
  Multiplying both sides of (\ref{rec+1+M}) by $z^n$ and summing over $n\geq0$, we have
  $$h_k(z)=\gamma zh_k(z)+h_{k-1}(z)\sum_{i= 0}^{\ell}\textsl{a}_iz^{t+i}$$
  for $k\geq1$,
  which immediately implies for $k\geq0$ that
  \begin{eqnarray}\label{function+column}
 h_k(z)&=&\frac{\left(\sum_{i= 0}^{\ell}\textsl{a}_iz^{t+i}\right)^k}{(1-\gamma z)^{k+1}}.
\end{eqnarray}
In consequence, the matrix $M$ is the Riordan array
$\mathcal {R}\left(\frac{1}{1-\gamma z},\frac{\sum_{i=
0}^{\ell}a^{(i)}_nz^{t+i}}{1-\gamma z}\right)$, which is (iv).

Note that the coefficient sequence of the product of two formal powerful
series is the convolution of their coefficient sequences. Let
$\textbf{\textsl{a}}=(\textsl{a}_i)_{i=0}^{\ell}$, $\bgamma=(\gamma^i)_{i\geq0}$,
and $\bmu=(M_{n,k})_{n\geq kt}$. By (\ref{function+column}), for the
Toeplitz matrix $\mathcal{T}(\bmu)$ of the column sequence $\bmu$, we have
$$\mathcal{T}(\bmu)=\mathcal{T}^{k+1}(\gamma)\mathcal{T}^k(\textbf{\textsl{a}})=\mathcal{T}^{k+1}(\gamma)\prod\limits_{i=1}^{\ell}\left(
               \begin{array}{ccccccc}
                                  \beta_i&&&&&& \\
                                  \alpha_i&\beta_i \\
                                  &\alpha_i&\beta_i\\
                                  &&\alpha_i&\beta_i\\
                                  &&&\ddots&\ddots\\
               \end{array}
            \right)^k.$$
In consequence, the Toeplitz matrix $\mathcal{T}(\bmu)$ is
$(\balpha,\bbeta,\gamma)$-totally positive from the classical Cauchy-Binet
formula. This proves that (ii) holds.

 (v)  By (iv) and the definition of the Riordan array, we have
  \begin{eqnarray*}
   M_{n,k}&=&[z^n]\frac{1}{1-\gamma z}\left(\frac{\sum_{i=0}^{\ell}a^{(i)}_nz^{t+i}}{1-\gamma z}\right)^k\nonumber\\
   &=&[z^n]\frac{z^{tk}\prod_{j= 1}^{\ell}(\alpha_jz+\beta_j)^k}{(1-\gamma z)^{k+1}}\nonumber\\
   &=&[z^{n-tk}]\sum\limits_{i\geq0}\binom{k+i}{i}(\gamma z)^i\sum\limits_{0\leq c_1,c_2,\ldots,c_{\ell}\leq k}\prod_{j= 1}^{\ell}\binom{k}{c_j}(\alpha_jz)^{c_j}{\beta_j}^{k-c_j}\nonumber\\
   &=&[z^{n-tk}]
   \!\!\!
   \sum_{\begin{scarray}
            i\geq 0  \\[-1mm]
            0\leq c_1,c_2,\ldots,c_{\ell}\leq k  \\[-1mm]
         \end{scarray}
        }
   \!\!\!
   \binom{k+i}{i}{\gamma}^i{\left(\prod_{j= 1}^{\ell}\binom{k}{c_j}{\alpha_j}^{c_j}{\beta_j}^{k-c_j}\right)}z^{i+\sum\limits_{j=1}^{\ell}c_j}\nonumber\\
   &=&
    \!\!\!
   \sum_{\begin{scarray}
            i\geq 0  \\[-1mm]
            0\leq c_1,c_2,\ldots,c_{\ell}\leq k  \\[-1mm]
            i+\sum\limits_{j=1}^{\ell}c_j=n-tk \\[-1mm]
         \end{scarray}
        }
   \!\!\!
   \binom{k+i}{i}{\gamma}^i\prod_{j=
   1}^{\ell}\binom{k}{c_j}{\alpha_j}^{c_j}{\beta_j}^{k-c_j}.
\end{eqnarray*}

  (vi) It immediately follows from (iv) and (\ref{generating funtion+GR}).

\textbf{Finally, we will present a combinatorial proof for (i) and (iii)
 of Theorem \ref{thm+fun+ell} in terms of the
Lindstr\"{o}m-Gessel-Viennot lemma. }First, we construct a weighted
digraph for the matrix $\widetilde{P}_n$ in (\ref{A}).
\begin{definition}\label{def+gamma+GG1}
Let $\Gamma_n= (V, \vec{E})$, where $ V=[n+\ell+1]\times[n+1]$, and
$\vec{E}$ is composed of  the following four kinds of arcs:
\begin{itemize}
    \item [(i)]
     $\textsl{a} =(i,j) \rightarrow (i+1,j)$, the weight
     \begin{eqnarray*}
         w_{\textsl{a}} =\begin{cases}  1       &\text{for} ~ (i,j)\in [1,n]\times[1,n]~\text{or}~i\in [1,n+\ell]~\text{and}~j=n+1 ;\\
                                       \beta_{i-n}      &\text{for}~(i,j)\in [n+2,n+\ell]\times[1,n].
                    \end{cases}
  \end{eqnarray*}
    \item [(ii)]
    $\textsl{a} = (i,j) \rightarrow (i+1,j+1)$, the weight
    \begin{eqnarray*}
         w_{\textsl{a}} =\begin{cases}   b_{n+1-i}       &\text{for} ~ 1\leq i=j\leq n;\\
                                       \alpha_{i-n}      &\text{for}~(i,j)\in [n+2,n+\ell]\times[1,n-1].
                    \end{cases}
  \end{eqnarray*}
    \item [(iii)]
    $\textsl{a} = (n+1,j) \rightarrow (n+2,j+t-1)$, the weight $w_{\textsl{a}} = \beta_1$ for $1\leq j\leq n+1-t$.
    \item [(iv)]
    $\textsl{a} = (n+1,j) \rightarrow (n+2,j+t)$, the weight $w_{\textsl{a}} = \alpha_1$ for $1\leq j\leq n-t$ .
\end{itemize}
Denote the vertex $(1,j)$ (resp. $(n+\ell+1,j)$, $(j,j)$) by $Q_{j-1}^{(n)}$ (resp.
$Q_{j-1}^{(n-1)}$, $R_{j-1}^{(n)}$) for $1\leq j\leq n+1$. Furthermore,
vertices $Q_n^{(n)},Q_{n-1}^{(n)},\ldots,Q_0^{(n)}$~(resp. $Q_n^{(n-1)},Q_{n-1}^{(n-1)},\ldots,Q_0^{(n-1)}$) are viewed as the source vertices (resp. the sink vertices).
\end{definition}

See Figure~12 for the case that $t=1,n=3$.

\begin{center}
\setlength{\unitlength}{0.95cm}
\begin{picture}(7,5)(7,1.6)

\thicklines\put(5.5,6){\line(1,0){1.5}}\thicklines\put(10,6){\line(1,0){1.5}}
 \thicklines\put(5.5,5){\line(1,0){1.5}}\thicklines\put(10,5){\line(1,0){1.5}}
\thicklines\put(5.5,4){\line(1,0){1.5}}\thicklines\put(10,4){\line(1,0){1.5}}
 \thicklines\put(5.5,3){\line(1,0){1.5}}\thicklines\put(10,3){\line(1,0){1.5}}

\thicklines\put(7,6){\line(1,0){1.5}} \thicklines\put(8.5,6){\line(1,0){1.5}}
\thicklines\put(7,5){\line(1,0){1.5}} \thicklines\put(8.5,5){\line(1,0){1.5}}
\thicklines\put(7,4){\line(1,0){1.5}} \thicklines\put(8.5,4){\line(1,0){1.5}}
\thicklines\put(7,3){\line(1,0){1.5}} \thicklines\put(8.5,3){\line(1,0){1.5}}
\thicklines\put(11.5,6){\line(1,0){1.5}}\thicklines\put(14,6){\line(1,0){1.5}}
\thicklines\put(11.5,5){\line(1,0){1.5}}\thicklines\put(14,5){\line(1,0){1.5}}
\thicklines\put(11.5,4){\line(1,0){1.5}}\thicklines\put(14,4){\line(1,0){1.5}}
\thicklines\put(11.5,3){\line(1,0){1.5}}\thicklines\put(14,3){\line(1,0){1.5}}

\thicklines\put(5.5,3){\line(3,2){1.5}}   \thicklines\put(10,4){\line(3,2){1.5}}
\thicklines\put(8.5,5){\line(3,2){1.5}}  \thicklines\put(10,3){\line(3,2){1.5}}
   \thicklines\put(7,4){\line(3,2){1.5}}   \thicklines\put(14,4){\line(3,2){1.5}}
\thicklines\put(11.5,3){\line(3,2){1.5}}
 \thicklines\put(11.5,4){\line(3,2){1.5}} \thicklines\put(14,3){\line(3,2){1.5}}

 \put(5.5,6){\circle*{0.16}} \put(7,6){\circle*{0.16}}
 \put(5.5,5){\circle*{0.16}} \put(7,5){\circle*{0.16}}
 \put(5.5,4){\circle*{0.16}} \put(7,4){\circle*{0.16}}
 \put(5.5,3){\circle*{0.16}} \put(7,3){\circle*{0.16}}

\put(8.5,6){\circle*{0.16}} \put(10,6){\circle*{0.16}} \put(11.5,6){\circle*{0.16}}
\put(8.5,5){\circle*{0.16}} \put(10,5){\circle*{0.16}} \put(11.5,5){\circle*{0.16}}
\put(8.5,4){\circle*{0.16}} \put(10,4){\circle*{0.16}} \put(11.5,4){\circle*{0.16}}
\put(8.5,3){\circle*{0.16}} \put(10,3){\circle*{0.16}} \put(11.5,3){\circle*{0.16}}

\put(13,6){\circle*{0.16}}\put(14,6){\circle*{0.16}}\put(15.5,6){\circle*{0.16}}
\put(13,5){\circle*{0.16}}\put(14,5){\circle*{0.16}}\put(15.5,5){\circle*{0.16}}
\put(13,4){\circle*{0.16}}\put(14,4){\circle*{0.16}}\put(15.5,4){\circle*{0.16}}
\put(13,3){\circle*{0.16}}\put(14,3){\circle*{0.16}}\put(15.5,3){\circle*{0.16}}
\put(13,6){\circle*{0.16}}\put(14,6){\circle*{0.16}}\put(15.5,6){\circle*{0.16}}
\put(13.3,6){\circle*{0.08}}\put(13.5,6){\circle*{0.08}}\put(13.7,6){\circle*{0.08}}
\put(13.3,5){\circle*{0.08}}\put(13.5,5){\circle*{0.08}}\put(13.7,5){\circle*{0.08}}
\put(13.3,4){\circle*{0.08}}\put(13.5,4){\circle*{0.08}}\put(13.7,4){\circle*{0.08}}
\put(13.3,3){\circle*{0.08}}\put(13.5,3){\circle*{0.08}}\put(13.7,3){\circle*{0.08}}

\tiny

\put(4.92,6.158){$Q_3^{(3)}$}      \put(6.2,6.1){1} \put(7.7,6.1){1} \put(9.2,6.1){1}
\put(4.92,5.158){$Q_{2}^{(3)}$}  \put(6.2,5.1){1} \put(7.7,5.1){1} \put(9.2,5.1){1}
\put(4.92,4.158){$Q_{1}^{(3)}$}  \put(6.2,4.1){1} \put(7.7,4.1){1} \put(9.2,4.1){1}
\put(4.92,3.158){$Q_{0}^{(3)}$}  \put(6.2,3.1){1} \put(7.7,3.1){1} \put(9.2,3.1){1}
\put(4.92,2.55){$(R_{0}^{(3)})$} \put(6.7,3.55){$R_{1}^{(3)}$}\put(8.2,4.55){$R_{2}^{(3)}$}
\put(9.7,5.55){$R_{3}^{(3)}$}  \put(15.65,2.55){$(R_{0}^{(2)})$}

\put(10.7,6.1){$1$}              \put(15.65,6.105){$Q_3^{(2)}$}
\put(10.7,5.1){$\beta_1$}     \put(15.65,5.105){$Q_{2}^{(2)}$}
\put(10.7,4.1){$\beta_1$}     \put(15.65,4.105){$Q_{1}^{(2)}$}   \put(8.93,5.6){$b_1$}       \put(10.7,6.1){1}
\put(10.7,3.1){$\beta_1$}     \put(15.65,3.105){$Q_{0}^{(2)}$}   \put(7.43,4.6){$b_2$}      \put(10.4,4.6){$\alpha_1$}
                                                           \put(5.93,3.6){$b_3$}        \put(10.4,3.6){$\alpha_1$}

\put(12.2,6.1){$1$}         \put(14.7,6.1){$1$}
\put(12.2,5.1){$\beta_2$}\put(14.7,5.1){$\beta_{\ell}$}
\put(12.2,4.1){$\beta_2$} \put(14.7,4.1){$\beta_{\ell}$}            \put(11.9,4.6){$\alpha_2$}    \put(14.4,4.6){$\alpha_{\ell}$}
\put(12.2,3.1){$\beta_2$} \put(14.7,3.1){$\beta_{\ell}$}            \put(11.9,3.6){$\alpha_2$}    \put(14.4,3.6){$\alpha_{\ell}$}

\normalsize
\put(5.2,1.7){Figure~12: $\Gamma_3 $ for $t=1$. All edges point towards the right.}
\end{picture}
\end{center}

Here we use the sequence of $(Q_{i}^{(n)})_{n,i\geq0}$ to construct
the corresponding weighted digraphs for the matrix $M$ and the
Toeplitz matrix of the each row sequence of $M$, and we use the
sequence of $(R_{i}^{(n)})_{n,i\geq0}$ to construct the
corresponding weighted digraph for the Toeplitz matrix of the
sequence $(M_{n+\delta i,k+\sigma i})_{i}$ with $\sigma>\delta$.

Denote by $B(n)$ the walk matrix of weighted digraph $\Gamma_n$ in
Definition \ref{def+gamma+GG1}, then the transfer-matrix method
\cite[Theorem 4.7.1]{Stanley12} establishes the relation between the
walk matrix $B(n)$ and the matrix $\widetilde{P}_n$.

\begin{prop}\label{prop+Pn}
The walk matrix $B(n)$ for the weighted digraph $\Gamma_n$ equals
the matrix $\widetilde{P}_n$.
\end{prop}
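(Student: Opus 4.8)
The plan is to compute the $(i,j)$-entry of the walk matrix $B(n)$ directly and to match it, regime by regime, with the entry $\widetilde{P}_{i,j}$ supplied by formula (\ref{formula+P}) — recall that $\widetilde{P}$ is $P$ with its columns $1,\dots,t$ deleted, and that throughout this section $a^{(i)}_n=\textsl{a}_i$ is independent of $n$ with $\sum_{i=0}^{\ell}\textsl{a}_iz^i=\prod_{j=1}^{\ell}(\alpha_jz+\beta_j)$. Following the index conventions fixed in the proof of Proposition \ref{prop+T1+P}, row $i$ corresponds to the source $Q_{n-i}^{(n)}=(1,n-i+1)$ and column $j$ to the sink $Q_{n-j}^{(n-1)}=(n+\ell+1,n-j+1)$, so $B_{i,j}$ is the total weight of all walks in $\Gamma_n$ from $(1,n-i+1)$ to $(n+\ell+1,n-j+1)$.

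First I would describe the shape of such a walk. In the left block (columns $1$ through $n+1$) the walk can only move right, with weight $1$, until it meets the main diagonal at $(n-i+1,n-i+1)$; there it may take $m\ge 0$ consecutive diagonal steps along the diagonal, contributing the weight $\prod_{s=0}^{m-1}b_{i-s}$ and raising the height to $n-i+1+m$, and then move right to column $n+1$; once a horizontal step is taken off the diagonal there is no way back, so $0\le m\le i$ (for $i=0$ this is the degenerate top-row walk). In the right block the walk crosses $\ell$ successive ``layers'': layer $1$ (from $x=n+1$ to $x=n+2$) uses either the $\beta_1$-arc, raising the height by $t-1$, or the $\alpha_1$-arc, raising it by $t$; each later layer $j$ ($2\le j\le\ell$) uses either a horizontal $\beta_j$-arc (height unchanged) or a diagonal $\alpha_j$-arc (height $+1$). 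Thus if $S\subseteq\{1,\dots,\ell\}$ is the set of layers on which the walk chooses an $\alpha$-arc, it accumulates weight $\prod_{j\in S}\alpha_j\prod_{j\notin S}\beta_j$ and shifts its height by exactly $t-1+|S|$, irrespective of whether $1\in S$. Summing over all $S$ with $|S|=c$ gives $\sum_{|S|=c}\prod_{j\in S}\alpha_j\prod_{j\notin S}\beta_j=[z^{c}]\prod_{j=1}^{\ell}(\alpha_jz+\beta_j)=\textsl{a}_c$. Matching the terminal height $n-i+1+m+(t-1+c)$ with the sink height $n-j+1$ forces $m+c=i-j-t+1$, whence
$$B_{i,j}=\sum_{m\ge 0}\Big(\prod_{s=0}^{m-1}b_{i-s}\Big)\,\textsl{a}_{\,i-j-t+1-m},$$
with the conventions $\textsl{a}_c=0$ for $c\notin\{0,\dots,\ell\}$ and $\prod_{s=0}^{-1}(\cdot)=1$, together with the separate straight-walk values $B_{0,0}=1$ and $B_{i,0}=\prod_{s=0}^{i-1}b_{i-s}$ for $i\ge1$. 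A short rewriting of (\ref{formula+P}) with $a^{(i)}_n=\textsl{a}_i$, split into the regimes $i<j+t-1$, $j+t-1\le i\le j+t+\ell-1$, and $i>j+t+\ell-1$ (re-indexing $c=\ell-m$ in the last case), shows this equals $\widetilde{P}_{i,j}$, so $B(n)=\widetilde{P}_n$. Alternatively, and perhaps more cleanly, one can invoke the transfer-matrix method \cite[Theorem 4.7.1]{Stanley12} blockwise: the left block contributes a lower-triangular ``$b$-factor'', layer $1$ a shift-by-$(t-1)$ / shift-by-$t$ bidiagonal block, and each remaining layer a copy of the elementary bidiagonal matrix $W_j$ of (\ref{Wi}); their product is exactly the factorisation of $\widetilde{P}_n$ implicit in (\ref{formula+P}) together with $\mathcal A=\prod_{j=1}^{\ell}W_j$.

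The step I expect to be the main obstacle is the boundary bookkeeping. One must verify that the upper bounds $j\le n+1-t$ and $j\le n-t$ in arcs (iii)–(iv) and the height restrictions ($\le n$, $\le n-1$) in arcs (i)–(ii) of the right block of Definition \ref{def+gamma+GG1} never cut off a walk that ought to be counted for the relevant indices $0\le i,j\le n$ of the leading principal submatrix $\widetilde{P}_n$, so that the naive ``choose any subset $S$'' enumeration is exact; one must also take care with the degenerate small-$t$ cases and reproduce the three (respectively four) regimes of (\ref{formula+P}) without off-by-one errors. This is a finite, essentially mechanical check of the same flavour as in the proof of Proposition \ref{prop+T1+P}, and I would present it in that same style.
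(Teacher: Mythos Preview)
Your proposal is correct, and the two routes you sketch correspond, in reverse order, to what the paper does and does not do. The paper's own proof is purely algebraic: it starts from the factorisation $P=\prod_{i\ge 0}E_{i+1,i}[b_{i+1}]\,(1,\mathcal{A})$ already established in (\ref{connection}), plugs in $\mathcal{A}=\prod_{i=1}^{\ell}W_i$ from (\ref{Wi}), manipulates the column deletion to obtain $\widetilde{P}=\prod_{i\ge 0}E_{i+1,i}[b_{i+1}]\,\widetilde{\Omega}_1\prod_{i=2}^{\ell}\Omega_i$ with $\Omega_1=(1,W_1)$ and $\Omega_i=\begin{pmatrix}1&\\&W_i\end{pmatrix}$, and then invokes the transfer-matrix method to identify this product with the walk matrix of $\Gamma_n$. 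This is precisely your ``alternatively, and perhaps more cleanly'' paragraph; the paper does no entrywise computation at all here.

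Your primary route---computing $B_{i,j}$ directly by decomposing each walk into a $b$-segment and an $(\alpha,\beta)$-segment, and recognising the sum over $|S|=c$ as $\textsl{a}_c$---is instead a direct analogue of the paper's proof of Proposition~\ref{prop+T1+P}, adapted from $\ell=2$ to general $\ell$. It is longer but more self-contained: it does not rely on the earlier factorisation (\ref{connection}), and it makes the combinatorial meaning of the identity $\sum_i \textsl{a}_iz^i=\prod_j(\alpha_jz+\beta_j)$ transparent. The boundary bookkeeping you flag (that the height caps in arcs (i)--(iv) never truncate a walk needed for indices $0\le i,j\le n$) is genuine but routine; the paper sidesteps it entirely by working at the level of matrix products.
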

\textbf{Proof:}
By (\ref{connection}) and (\ref{Wi}), we have
\begin{eqnarray*}\label{Pn}
  P&=&\prod\limits_{i\geq0} E_{i+1,i}[b_{i+1}]\Delta\nonumber\\
   &=&\prod\limits_{i\geq0} E_{i+1,i}[b_{i+1}](1,\mathcal{A})\nonumber\\
   &=&\prod\limits_{i\geq0} E_{i+1,i}[b_{i+1}](1,W_1)\left(
                    \begin{array}{ccccccc}
                                  1& \\
                                  &\prod\limits_{i=2}^{\ell}W_i \\
                  \end{array}
            \right).
\end{eqnarray*}
Let $\widetilde{I}$ be a submatrix of identity matrix $I$ by
deleting columns from $1$th to $t$th. Since $\widetilde{P}$ is a
submatrix of $P$ by deleting columns from $1$th to $t$th, we obtain
\begin{eqnarray*}
  \widetilde{P}&=&\prod\limits_{i\geq0} E_{i+1,i}[b_{i+1}](1,W_1)\left(
                    \begin{array}{ccccccc}
                                  1& \\
                                  &\prod\limits_{i=2}^{\ell}W_i \\
                  \end{array}
            \right)\widetilde{I}.
\end{eqnarray*}
It is obvious that the matrix
$\left( \begin{array}{ccccccc}
                                  1& \\
                                  &\prod\limits_{i=2}^{\ell}W_i \\
        \end{array}
\right)$ deleting columns from $1$th to $t$th is the same as the
matrix $\left( \begin{array}{ccccccc}
                                  1& \\
                                  &\prod\limits_{i=2}^{\ell}W_i \\
        \end{array}
\right)$ shifting down $t$ rows from the first row. Therefore, we
derive
\begin{eqnarray*}
  \widetilde{P} &=&\prod\limits_{i\geq0} E_{i+1,i}[b_{i+1}](1,W_1)\widetilde{I}\left(
                    \begin{array}{ccccccc}
                                  1& \\
                                  &\prod\limits_{i=2}^{\ell}W_i \\
                  \end{array}
            \right).
\end{eqnarray*}
Let
$${\Omega}_1=(1,W_1)~\text{and}~{\Omega}_i=\left(
                    \begin{array}{ccccccc}
                                  1& \\
                                  &W_i \\
                  \end{array}
            \right)~\text{for}~2\leq i\leq \ell.$$
Hence, $\widetilde{P}$ has the following decomposition
\begin{eqnarray}\label{L}
  \widetilde{P}&=&\prod\limits_{i\geq0} E_{i+1,i}[b_{i+1}]\widetilde{{\Omega}}_1\prod\limits_{i=2}^{\ell}{\Omega}_i,
\end{eqnarray}
where $\widetilde{\Omega}_{1}$ is a submatrix of ${\Omega}_{1}$ by
deleting columns from $1$th to $t$th. Applying the transfer-matrix
method \cite[Theorem 4.7.1]{Stanley12} to equality (\ref{L}), we
derive the desired result.\qed

Now we will present a weighted digraph $\Gamma^{\ast}_n$ for $M$
with the help of $\Gamma_n$.

\begin{definition}\label{definition+star+g}
Define recursively a weighted digraph, denoted by
$\Gamma^{\ast}_n=(V^{\ast}_n, \vec{E}^{\ast}_n)$, as follows:
\begin{itemize}
  \item [(i)]
  For $n=1$, we take the weighted digraph $\Gamma_1$ as
  $\Gamma^{\ast}_1$ (see Figure~13 for the case $t=1$).
  \begin{center}
  \setlength{\unitlength}{0.6cm}
  \begin{picture}(3,3.3)(3.5,-1.4)
  \thicklines\put(1,1){\line(1,0){1.5}}  \thicklines\put(2.5,1){\line(1,0){1.5}}\thicklines\put(4,1){\line(1,0){1.5}}
  \thicklines\put(1,0){\line(1,0){1.5}}  \thicklines\put(2.5,0){\line(1,0){1.5}}\thicklines\put(4,0){\line(1,0){1.5}}

  \thicklines\put(6.5,0){\line(1,0){1.5}} \thicklines\put(6.5,1){\line(1,0){1.5}}\thicklines\put(1,0){\line(3,2){1.5}}

  \put(1,1){\circle*{0.2}} \put(2.5,1){\circle*{0.2}} \put(4,1){\circle*{0.2}}\put(5.5,1){\circle*{0.2}}\put(6.5,1){\circle*{0.2}}
  \put(1,0){\circle*{0.2}} \put(2.5,0){\circle*{0.2}} \put(4,0){\circle*{0.2}}\put(5.5,0){\circle*{0.2}}\put(6.5,0){\circle*{0.2}}
  \put(5.8,0){\circle*{0.1}}\put(6,0){\circle*{0.1}}\put(6.2,0){\circle*{0.1}}
  \put(5.8,1){\circle*{0.1}}\put(6,1){\circle*{0.1}}\put(6.2,1){\circle*{0.1}}
  \put(8,0){\circle*{0.2}}
  \put(8,1){\circle*{0.2}}
  \tiny

  \put(0.1,1.1){$Q_1^{(1)}$} \put(8.2,1.105){$Q_1^{(0)}$}\put(2.2,0.5){$R_1^{(1)}$}
  \put(0.1,0.1){$Q_0^{(1)}$} \put(8.2,0.105){$Q_0^{(0)}$}
                        \put(1.7,1.1){$1$} \put(3.2,1.1){$1$}         \put(4.7,1.1){$1$}        \put(7.2,1.1){$1$}
  \put(1.35,0.6){$b_1$}  \put(1.7,0.1){$1$} \put(3.2,0.15){$\beta_1$}\put(4.7,0.15){$\beta_2$}\put(7.2,0.15){$\beta_{\ell}$}

  \normalsize
  \put(-4,-1.3){Figure~13: $\Gamma^{\ast}_1$  for $t=1$. All edges point towards the right.}
  \end{picture}
  \end{center}
  \item [(ii)]
  Suppose that $\Gamma^{\ast}_{n-1}=(V^{\ast}_{n-1}, \vec{E}^{\ast}_{n-1})$ has been constructed for some
  $n\geq2$.
  Then $\Gamma^{\ast}_n$ is the union of $\Gamma_n$ and $\overline{\Gamma}^{\ast}_{n-1}$ by gluing vertices $Q^{(n-1)}_{i}$ $(0\leq i\leq n)$, where
   $\overline{\Gamma}^{\ast}_{n-1}$ is obtained from $\Gamma^{\ast}_{n-1}$ by adding vertices
  $Q_n^{(n-1)},\ldots,Q_{n}^{(0)}$ to $V^{\ast}_{n-1}$ and
   arcs $Q_{n}^{(i+1)} \rightarrow Q_n^{(i)}$ with weight $1$ to
   $\vec{E}^{\ast}_{n-1}$ $(0\leq i\leq n-2)$.
\end{itemize}
\end{definition}
By Definition \ref{definition+star+g}, we observe that vertices
$Q_n^{(n)},Q_{n-1}^{(n)},\ldots,Q_0^{(n)}$~ (resp.
$Q_n^{(0)},Q_{n-1}^{(0)},\ldots,Q_0^{(0)}$) are viewed as the source
vertices (resp. the sink vertices) of the weighted digraph
$\Gamma^{\ast}_n$. In order to understand well how to construct the
weighted digraph $\Gamma^{\ast}_n$, for $t=1$, we draw the weighted
digraphs $\Gamma^{\ast}_2$, $\overline{\Gamma}^{\ast}_2$ and
$\Gamma^{\ast}_3$ (see Figures 14-16 with omitting weight $1$),
respectively.

 \begin{center}
  \setlength{\unitlength}{0.6cm}
  \begin{picture}(3,5)(-1.2,-2)

  \thicklines\put(-7.5,2){\line(1,0){1.5}} \thicklines\put(-6,2){\line(1,0){1.5}}
  \thicklines\put(-7.5,1){\line(1,0){1.5}}\thicklines\put(-6,1){\line(1,0){1.5}}
  \thicklines\put(-7.5,0){\line(1,0){1.5}}\thicklines\put(-6,0){\line(1,0){1.5}}

  \thicklines\put(-4.5,2){\line(1,0){1.5}}\thicklines\put(-3,2){\line(1,0){1.5}}\thicklines\put(-0.5,2){\line(1,0){1.5}}
 \thicklines\put(-4.5,1){\line(1,0){1.5}}\thicklines\put(-3,1){\line(1,0){1.5}}
 \thicklines\put(-0.5,1){\line(1,0){1.5}}
  \thicklines\put(-4.5,0){\line(1,0){1.5}}\thicklines\put(-3,0){\line(1,0){1.5}}\thicklines\put(-0.5,0){\line(1,0){1.5}}

  \thicklines\put(1,2){\line(1,0){7}}
 \thicklines\put(1,1){\line(1,0){1.5}}  \thicklines\put(2.5,1){\line(1,0){1.5}}\thicklines\put(4,1){\line(1,0){1.5}}
  \thicklines\put(1,0){\line(1,0){1.5}} \thicklines\put(2.5,0){\line(1,0){1.5}}\thicklines\put(4,0){\line(1,0){1.5}}

  \thicklines\put(6.5,0){\line(1,0){1.5}} \thicklines\put(6.5,1){\line(1,0){1.5}} \thicklines\put(1,0){\line(3,2){1.5}}
  \thicklines\put(-7.5,0){\line(3,2){1.5}} \thicklines\put(-6,1){\line(3,2){1.5}}\thicklines\put(-4.5,0){\line(3,2){1.5}}
   \thicklines\put(-3,0){\line(3,2){1.5}}\thicklines\put(-0.5,0){\line(3,2){1.5}}

  \put(-0.5,2){\circle*{0.2}}\put(-1.5,2){\circle*{0.2}}\put(-3,2){\circle*{0.2}}\put(-4.5,2){\circle*{0.2}}\put(-6,2){\circle*{0.2}}
  \put(-0.5,1){\circle*{0.2}}\put(-1.5,1){\circle*{0.2}}\put(-3,1){\circle*{0.2}}\put(-4.5,1){\circle*{0.2}}\put(-6,1){\circle*{0.2}}
  \put(-0.5,0){\circle*{0.2}}\put(-1.5,0){\circle*{0.2}}\put(-3,0){\circle*{0.2}}\put(-4.5,0){\circle*{0.2}}\put(-6,0){\circle*{0.2}}

  \put(1,2){\circle*{0.2}}                                                                       \put(8,2){\circle*{0.2}}
  \put(1,1){\circle*{0.2}} \put(2.5,1){\circle*{0.2}} \put(4,1){\circle*{0.2}}\put(5.5,1){\circle*{0.2}}\put(6.5,1){\circle*{0.2}}
  \put(1,0){\circle*{0.2}} \put(2.5,0){\circle*{0.2}} \put(4,0){\circle*{0.2}}\put(5.5,0){\circle*{0.2}}\put(6.5,0){\circle*{0.2}}
  \put(-1.2,0){\circle*{0.1}}\put(-1,0){\circle*{0.1}}\put(-0.8,0){\circle*{0.1}}
  \put(-1.2,1){\circle*{0.1}}\put(-1,1){\circle*{0.1}}\put(-0.8,1){\circle*{0.1}}
  \put(-1.2,2){\circle*{0.1}}\put(-1,2){\circle*{0.1}}\put(-0.8,2){\circle*{0.1}}
  \put(5.8,0){\circle*{0.1}}\put(6,0){\circle*{0.1}}\put(6.2,0){\circle*{0.1}}
  \put(5.8,1){\circle*{0.1}}\put(6,1){\circle*{0.1}}\put(6.2,1){\circle*{0.1}}

  \put(-7.5,0){\circle*{0.2}}\put(8,0){\circle*{0.2}}
  \put(-7.5,1){\circle*{0.2}} \put(8,1){\circle*{0.2}}
  \put(-7.5,2){\circle*{0.2}}
  \tiny
\put(8.2,0){$Q_0^{(0)}$}\put(8.2,1){$Q_1^{(0)}$}\put(8.2,2){$Q_2^{(0)}$}
  \put(0.7,-0.6){$Q_0^{(1)}$}  \put(2.2,0.5){$R_1^{(1)}$}
  \put(-8.45,0){$Q_0^{(2)}$} \put(-6.2,0.5){$R_1^{(2)}$}\put(-4.8,1.5){$R_2^{(2)}$}
\put(1.3,0.6){$b_1$}
\put(-5.7,1.6){$b_1$}   \put(-7.2,0.6){$b_2$}
\put(-4.3,0.6){$\alpha_1$}\put(-2.8,0.6){$\alpha_2$}\put(-0.3,0.6){$\alpha_{\ell}$}
\put(0.7,1.3){$Q_1^{(1)}$} \put(0.7,2.3){$Q_2^{(1)}$}
\put(-8.45,1){$Q_1^{(2)}$} \put(-8.45,2){$Q_2^{(2)}$}
\put(-4,-0.35){$\beta_1$}\put(-2.5,-0.35){$\beta_2$}\put(0,-0.35){$\beta_{\ell}$}
\put(-4,1.15){$\beta_1$}\put(-2.5,1.15){$\beta_2$}\put(0,1.15){$\beta_{\ell}$}
\put(3,-0.35){$\beta_1$}\put(4.5,-0.35){$\beta_2$}\put(7,-0.35){$\beta_{\ell}$}
  \normalsize
  \put(-8,-1.9){Figure~14: ${\Gamma}^{\ast}_2$  for $ t=1$. All edges point towards the right.}
  \end{picture}
  \end{center}

  \begin{center}
  \setlength{\unitlength}{0.6cm}
  \begin{picture}(3,6)(-1.2,-2)

 \thicklines\put(-7.5,3){\line(1,0){15.5}}
  \thicklines\put(-7.5,2){\line(1,0){1.5}} \thicklines\put(-6,2){\line(1,0){1.5}}
  \thicklines\put(-7.5,1){\line(1,0){1.5}}\thicklines\put(-6,1){\line(1,0){1.5}}
  \thicklines\put(-7.5,0){\line(1,0){1.5}}\thicklines\put(-6,0){\line(1,0){1.5}}

  \thicklines\put(-4.5,2){\line(1,0){1.5}}\thicklines\put(-3,2){\line(1,0){1.5}}\thicklines\put(-0.5,2){\line(1,0){1.5}}
 \thicklines\put(-4.5,1){\line(1,0){1.5}}\thicklines\put(-3,1){\line(1,0){1.5}}
 \thicklines\put(-0.5,1){\line(1,0){1.5}}
  \thicklines\put(-4.5,0){\line(1,0){1.5}}\thicklines\put(-3,0){\line(1,0){1.5}}\thicklines\put(-0.5,0){\line(1,0){1.5}}

  \thicklines\put(1,2){\line(1,0){7}}
 \thicklines\put(1,1){\line(1,0){1.5}}  \thicklines\put(2.5,1){\line(1,0){1.5}}\thicklines\put(4,1){\line(1,0){1.5}}
  \thicklines\put(1,0){\line(1,0){1.5}} \thicklines\put(2.5,0){\line(1,0){1.5}}\thicklines\put(4,0){\line(1,0){1.5}}

  \thicklines\put(6.5,0){\line(1,0){1.5}} \thicklines\put(6.5,1){\line(1,0){1.5}} \thicklines\put(1,0){\line(3,2){1.5}}
  \thicklines\put(-7.5,0){\line(3,2){1.5}} \thicklines\put(-6,1){\line(3,2){1.5}}\thicklines\put(-4.5,0){\line(3,2){1.5}}
   \thicklines\put(-3,0){\line(3,2){1.5}}\thicklines\put(-0.5,0){\line(3,2){1.5}}
 \put(-7.5,3){\circle*{0.2}}\put(8,3){\circle*{0.2}}

  \put(-0.5,2){\circle*{0.2}}\put(-1.5,2){\circle*{0.2}}\put(-3,2){\circle*{0.2}}\put(-4.5,2){\circle*{0.2}}\put(-6,2){\circle*{0.2}}
  \put(-0.5,1){\circle*{0.2}}\put(-1.5,1){\circle*{0.2}}\put(-3,1){\circle*{0.2}}\put(-4.5,1){\circle*{0.2}}\put(-6,1){\circle*{0.2}}
  \put(-0.5,0){\circle*{0.2}}\put(-1.5,0){\circle*{0.2}}\put(-3,0){\circle*{0.2}}\put(-4.5,0){\circle*{0.2}}\put(-6,0){\circle*{0.2}}

  \put(1,2){\circle*{0.2}}  \put(1,3){\circle*{0.2}}                                                                         \put(8,2){\circle*{0.2}}
  \put(1,1){\circle*{0.2}} \put(2.5,1){\circle*{0.2}} \put(4,1){\circle*{0.2}}\put(5.5,1){\circle*{0.2}}\put(6.5,1){\circle*{0.2}}
  \put(1,0){\circle*{0.2}} \put(2.5,0){\circle*{0.2}} \put(4,0){\circle*{0.2}}\put(5.5,0){\circle*{0.2}}\put(6.5,0){\circle*{0.2}}
  \put(-1.2,0){\circle*{0.1}}\put(-1,0){\circle*{0.1}}\put(-0.8,0){\circle*{0.1}}
  \put(-1.2,1){\circle*{0.1}}\put(-1,1){\circle*{0.1}}\put(-0.8,1){\circle*{0.1}}
  \put(-1.2,2){\circle*{0.1}}\put(-1,2){\circle*{0.1}}\put(-0.8,2){\circle*{0.1}}
  \put(5.8,0){\circle*{0.1}}\put(6,0){\circle*{0.1}}\put(6.2,0){\circle*{0.1}}
  \put(5.8,1){\circle*{0.1}}\put(6,1){\circle*{0.1}}\put(6.2,1){\circle*{0.1}}

  \put(-7.5,0){\circle*{0.2}}\put(8,0){\circle*{0.2}}
  \put(-7.5,1){\circle*{0.2}} \put(8,1){\circle*{0.2}}
  \put(-7.5,2){\circle*{0.2}}
  \tiny
\put(8.2,0){$Q_0^{(0)}$}\put(8.2,1){$Q_1^{(0)}$}\put(8.2,2){$Q_2^{(0)}$}\put(8.2,3){$Q_3^{(0)}$}
  \put(0.7,-0.6){$Q_0^{(1)}$}  \put(2.2,0.5){$R_1^{(1)}$}
  \put(-8.45,0){$Q_0^{(2)}$} \put(-6.2,0.5){$R_1^{(2)}$}\put(-4.8,1.5){$R_2^{(2)}$}
\put(1.3,0.6){$b_1$}
\put(-5.7,1.6){$b_1$}   \put(-7.2,0.6){$b_2$}
\put(-4.3,0.6){$\alpha_1$}\put(-2.8,0.6){$\alpha_2$}\put(-0.3,0.6){$\alpha_{\ell}$}
\put(0.7,1.3){$Q_1^{(1)}$} \put(0.7,2.3){$Q_2^{(1)}$} \put(0.7,3.3){$Q_3^{(1)}$}
\put(-8.45,1){$Q_1^{(2)}$} \put(-8.45,2){$Q_2^{(2)}$} \put(-8.45,3){$Q_3^{(2)}$}
\put(-4,-0.35){$\beta_1$}\put(-2.5,-0.35){$\beta_2$}\put(0,-0.35){$\beta_{\ell}$}
\put(-4,1.15){$\beta_1$}\put(-2.5,1.15){$\beta_2$}\put(0,1.15){$\beta_{\ell}$}
\put(3,-0.35){$\beta_1$}\put(4.5,-0.35){$\beta_2$}\put(7,-0.35){$\beta_{\ell}$}
  \normalsize
  \put(-8,-1.9){Figure~15: $\overline{\Gamma}^{\ast}_2$  for $ t=1$. All edges point towards the right.}
  \end{picture}
  \end{center}

   \begin{center}
  \setlength{\unitlength}{0.58cm}
  \begin{picture}(-7,6)(-1.2,-2)
  \thicklines\put(-17.5,3){\line(1,0){6}}\thicklines\put(-11.5,3){\line(1,0){1.5}} \thicklines\put(-9,3){\line(1,0){1.5}}
 \thicklines\put(-17.5,2){\line(1,0){6}}\thicklines\put(-11.5,2){\line(1,0){1.5}} \thicklines\put(-9,2){\line(1,0){1.5}}
 \thicklines\put(-17.5,1){\line(1,0){6}}\thicklines\put(-11.5,1){\line(1,0){1.5}} \thicklines\put(-9,1){\line(1,0){1.5}}
 \thicklines\put(-17.5,0){\line(1,0){6}}\thicklines\put(-11.5,0){\line(1,0){1.5}} \thicklines\put(-9,0){\line(1,0){1.5}}
 \thicklines\put(-7.5,3){\line(1,0){15.5}}
  \thicklines\put(-7.5,2){\line(1,0){1.5}} \thicklines\put(-6,2){\line(1,0){1.5}}
  \thicklines\put(-7.5,1){\line(1,0){1.5}}\thicklines\put(-6,1){\line(1,0){1.5}}
  \thicklines\put(-7.5,0){\line(1,0){1.5}}\thicklines\put(-6,0){\line(1,0){1.5}}

  \thicklines\put(-17.5,0){\line(3,2){1.5}}\thicklines\put(-16,1){\line(3,2){1.5}}\thicklines\put(-14.5,2){\line(3,2){1.5}}
 \thicklines\put(-13,0){\line(3,2){1.5}}\thicklines\put(-13,1){\line(3,2){1.5}}\thicklines\put(-11.5,0){\line(3,2){1.5}}
 \thicklines\put(-11.5,1){\line(3,2){1.5}} \thicklines\put(-9,1){\line(3,2){1.5}}\thicklines\put(-9,0){\line(3,2){1.5}}

  \thicklines\put(-4.5,2){\line(1,0){1.5}}\thicklines\put(-3,2){\line(1,0){1.5}}\thicklines\put(-0.5,2){\line(1,0){1.5}}
 \thicklines\put(-4.5,1){\line(1,0){1.5}}\thicklines\put(-3,1){\line(1,0){1.5}}
 \thicklines\put(-0.5,1){\line(1,0){1.5}}
  \thicklines\put(-4.5,0){\line(1,0){1.5}}\thicklines\put(-3,0){\line(1,0){1.5}}\thicklines\put(-0.5,0){\line(1,0){1.5}}

  \thicklines\put(1,2){\line(1,0){7}}
 \thicklines\put(1,1){\line(1,0){1.5}}  \thicklines\put(2.5,1){\line(1,0){1.5}}\thicklines\put(4,1){\line(1,0){1.5}}
  \thicklines\put(1,0){\line(1,0){1.5}} \thicklines\put(2.5,0){\line(1,0){1.5}}\thicklines\put(4,0){\line(1,0){1.5}}

  \thicklines\put(6.5,0){\line(1,0){1.5}} \thicklines\put(6.5,1){\line(1,0){1.5}} \thicklines\put(1,0){\line(3,2){1.5}}
  \thicklines\put(-7.5,0){\line(3,2){1.5}} \thicklines\put(-6,1){\line(3,2){1.5}}\thicklines\put(-4.5,0){\line(3,2){1.5}}
   \thicklines\put(-3,0){\line(3,2){1.5}}\thicklines\put(-0.5,0){\line(3,2){1.5}}
  \put(-17.5,3){\circle*{0.2}}\put(-16,3){\circle*{0.2}}
  \put(-17.5,2){\circle*{0.2}}\put(-16,2){\circle*{0.2}}
  \put(-17.5,1){\circle*{0.2}}\put(-16,1){\circle*{0.2}}
  \put(-17.5,0){\circle*{0.2}}\put(-16,0){\circle*{0.2}}\put(-7.5,3){\circle*{0.2}}\put(8,3){\circle*{0.2}}

 \put(-14.5,3){\circle*{0.2}}\put(-13,3){\circle*{0.2}}\put(-11.5,3){\circle*{0.2}}\put(-10,3){\circle*{0.2}}\put(-9,3){\circle*{0.2}}
  \put(-14.5,2){\circle*{0.2}}\put(-13,2){\circle*{0.2}}\put(-11.5,2){\circle*{0.2}}\put(-10,2){\circle*{0.2}}\put(-9,2){\circle*{0.2}}
  \put(-14.5,1){\circle*{0.2}}\put(-13,1){\circle*{0.2}}\put(-11.5,1){\circle*{0.2}}\put(-10,1){\circle*{0.2}}\put(-9,1){\circle*{0.2}}
  \put(-14.5,0){\circle*{0.2}}\put(-13,0){\circle*{0.2}}\put(-11.5,0){\circle*{0.2}}\put(-10,0){\circle*{0.2}}\put(-9,0){\circle*{0.2}}

  \put(-0.5,2){\circle*{0.2}}\put(-1.5,2){\circle*{0.2}}\put(-3,2){\circle*{0.2}}\put(-4.5,2){\circle*{0.2}}\put(-6,2){\circle*{0.2}}
  \put(-0.5,1){\circle*{0.2}}\put(-1.5,1){\circle*{0.2}}\put(-3,1){\circle*{0.2}}\put(-4.5,1){\circle*{0.2}}\put(-6,1){\circle*{0.2}}
  \put(-0.5,0){\circle*{0.2}}\put(-1.5,0){\circle*{0.2}}\put(-3,0){\circle*{0.2}}\put(-4.5,0){\circle*{0.2}}\put(-6,0){\circle*{0.2}}

  \put(1,2){\circle*{0.2}}  \put(1,3){\circle*{0.2}}                                                                         \put(8,2){\circle*{0.2}}
  \put(1,1){\circle*{0.2}} \put(2.5,1){\circle*{0.2}} \put(4,1){\circle*{0.2}}\put(5.5,1){\circle*{0.2}}\put(6.5,1){\circle*{0.2}}
  \put(1,0){\circle*{0.2}} \put(2.5,0){\circle*{0.2}} \put(4,0){\circle*{0.2}}\put(5.5,0){\circle*{0.2}}\put(6.5,0){\circle*{0.2}}
 \put(-1.2,0){\circle*{0.1}}\put(-1,0){\circle*{0.1}}\put(-0.8,0){\circle*{0.1}}
  \put(-1.2,1){\circle*{0.1}}\put(-1,1){\circle*{0.1}}\put(-0.8,1){\circle*{0.1}}
  \put(-1.2,2){\circle*{0.1}}\put(-1,2){\circle*{0.1}}\put(-0.8,2){\circle*{0.1}}
  \put(5.8,0){\circle*{0.1}}\put(6,0){\circle*{0.1}}\put(6.2,0){\circle*{0.1}}
  \put(5.8,1){\circle*{0.1}}\put(6,1){\circle*{0.1}}\put(6.2,1){\circle*{0.1}}

  \put(-9.3,0){\circle*{0.1}}\put(-9.5,0){\circle*{0.1}}\put(-9.7,0){\circle*{0.1}}
  \put(-9.3,1){\circle*{0.1}}\put(-9.5,1){\circle*{0.1}}\put(-9.7,1){\circle*{0.1}}
  \put(-9.3,2){\circle*{0.1}}\put(-9.5,2){\circle*{0.1}}\put(-9.7,2){\circle*{0.1}}
  \put(-9.3,3){\circle*{0.1}}\put(-9.5,3){\circle*{0.1}}\put(-9.7,3){\circle*{0.1}}

  \put(-7.5,0){\circle*{0.2}}\put(8,0){\circle*{0.2}}
  \put(-7.5,1){\circle*{0.2}} \put(8,1){\circle*{0.2}}
  \put(-7.5,2){\circle*{0.2}}
  \tiny
\put(8.2,0){$Q_0^{(0)}$}\put(8.2,1){$Q_1^{(0)}$}\put(8.2,2){$Q_2^{(0)}$}\put(8.2,3){$Q_3^{(0)}$}
  \put(0.7,-0.6){$Q_0^{(1)}$}  \put(2.2,0.5){$R_1^{(1)}$}
  \put(-7.8,-0.6){$Q_0^{(2)}$} \put(-6.2,0.5){$R_1^{(2)}$}\put(-4.8,1.5){$R_2^{(2)}$}
  \put(-18.4,0.1){$Q_0^{(3)}$} \put(-16.2,0.5){$R_1^{(3)}$}\put(-14.7,1.5){$R_2^{(3)}$} \put(-13.2,2.5){$R_3^{(3)}$}
\put(1.3,0.6){$b_1$} \put(-5.7,1.6){$b_1$}   \put(-7.2,0.6){$b_2$}
\put(-17.2,0.6){$b_3$}   \put(-15.7,1.6){$b_2$}
\put(-14.2,2.6){$b_1$}
\put(-4.3,0.6){$\alpha_1$}\put(-2.8,0.6){$\alpha_2$}\put(-0.3,0.6){$\alpha_{\ell}$}
\put(-12.9,0.6){$\alpha_1$}\put(-11.3,0.6){$\alpha_2$}\put(-8.8,0.6){$\alpha_{\ell}$}
\put(-12.9,1.6){$\alpha_1$}\put(-11.3,1.6){$\alpha_2$}\put(-8.8,1.6){$\alpha_{\ell}$}
\put(0.7,1.3){$Q_1^{(1)}$} \put(0.7,2.3){$Q_2^{(1)}$}
\put(0.7,3.3){$Q_3^{(1)}$} \put(-7.7,1.3){$Q_1^{(2)}$}
\put(-7.7,2.3){$Q_2^{(2)}$} \put(-7.7,3.3){$Q_3^{(2)}$}
\put(-18.4,1.1){$Q_1^{(3)}$} \put(-18.4,2.1){$Q_2^{(3)}$}
\put(-18.4,3.1){$Q_3^{(3)}$}
\put(-4,-0.35){$\beta_1$}\put(-2.5,-0.35){$\beta_2$}\put(0,-0.35){$\beta_{\ell}$}
\put(-4,1.15){$\beta_1$}\put(-2.5,1.15){$\beta_2$}\put(0,1.15){$\beta_{\ell}$}
\put(3,-0.35){$\beta_1$}\put(4.5,-0.35){$\beta_2$}\put(7,-0.35){$\beta_{\ell}$}
\put(-12.3,1.15){${\beta}_1$}\put(-10.8,1.15){$\beta_2$}\put(-8.3,1.15){$\beta_{\ell}$}
\put(-12.3,-0.35){$\beta_1$}\put(-10.8,-0.35){$\beta_2$}\put(-8.3,-0.35){$\beta_{\ell}$}
\put(-12.3,2.15){$\beta_1$}\put(-10.8,2.15){$\beta_2$}\put(-8.3,2.15){$\beta_{\ell}$}
  \normalsize
  \put(-13,-1.9){Figure~16: $\Gamma^{\ast}_3$  for $ t=1$. All edges point towards the right.}
  \end{picture}
  \end{center}

From the definition of the weighted digraph $\overline{\Gamma}^{\ast}_{n-1}$,
then it is easy to see that its walk matrix $\overline{B}^{\ast}(n-1)$ satisfies the following equality:
$$\overline{B}^{\ast}(n-1)=\left(\begin{array}{cc}
                                                               1&0\\
                                                               0&{M}_{n-1}\\
                                 \end{array}
                            \right).$$
The next result follows from equality (\ref{A}) and
the transfer-matrix method \cite[Theorem 4.7.1]{Stanley12}.
\begin{prop}\label{prop+GG+M}
The walk matrix $B^{\ast}(n)$ for the weighted digraph
$\Gamma^{\ast}_n$ equals the matrix $M_n$.
\end{prop}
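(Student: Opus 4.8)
The plan is to argue by induction on $n$, following the recursive construction of $\Gamma^{\ast}_n$ in Definition~\ref{definition+star+g} and feeding it into the block factorization~(\ref{A}) via the transfer-matrix method. For the base case $n=1$ one has $\Gamma^{\ast}_1=\Gamma_1$ by definition, so $B^{\ast}(1)=B(1)=\widetilde{P}_1$ by Proposition~\ref{prop+Pn}; since $M_0=[M_{0,0}]=[1]$, equality~(\ref{A}) reads $M_1=\widetilde{P}_1\left(\begin{smallmatrix}1&0\\0&M_0\end{smallmatrix}\right)=\widetilde{P}_1$, whence $B^{\ast}(1)=M_1$.

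For the inductive step, assume $B^{\ast}(n-1)=M_{n-1}$. The key observation is that the two pieces of $\Gamma^{\ast}_n=\Gamma_n\cup\overline{\Gamma}^{\ast}_{n-1}$ share exactly the glued vertices $Q^{(n-1)}_0,\dots,Q^{(n-1)}_n$, and that each of these is a sink of $\Gamma_n$ (it has no outgoing arc inside $\Gamma_n$) and a source of $\overline{\Gamma}^{\ast}_{n-1}$ (it has no incoming arc inside $\overline{\Gamma}^{\ast}_{n-1}$). Consequently every walk in $\Gamma^{\ast}_n$ from a source $Q^{(n)}_i$ to a sink $Q^{(0)}_j$ meets exactly one vertex $Q^{(n-1)}_s$, and factors uniquely as a walk $Q^{(n)}_i\to Q^{(n-1)}_s$ inside $\Gamma_n$ concatenated with a walk $Q^{(n-1)}_s\to Q^{(0)}_j$ inside $\overline{\Gamma}^{\ast}_{n-1}$, the two weights multiplying. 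By the transfer-matrix method \cite[Theorem~4.7.1]{Stanley12} this is precisely the statement $B^{\ast}(n)=B(n)\,\overline{B}^{\ast}(n-1)$, the sinks of $\Gamma_n$ and the sources of $\overline{\Gamma}^{\ast}_{n-1}$ being indexed in the common order $Q^{(n-1)}_n,\dots,Q^{(n-1)}_0$.

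It then remains to assemble the three ingredients: $B(n)=\widetilde{P}_n$ by Proposition~\ref{prop+Pn}; $\overline{B}^{\ast}(n-1)=\left(\begin{smallmatrix}1&0\\0&M_{n-1}\end{smallmatrix}\right)$ by the remark preceding the proposition together with the induction hypothesis $B^{\ast}(n-1)=M_{n-1}$; and $M_n=\widetilde{P}_n\left(\begin{smallmatrix}1&0\\0&M_{n-1}\end{smallmatrix}\right)$ from~(\ref{A}). Combining these gives $B^{\ast}(n)=\widetilde{P}_n\left(\begin{smallmatrix}1&0\\0&M_{n-1}\end{smallmatrix}\right)=M_n$, which closes the induction.

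I expect the only delicate point to be the bookkeeping of the vertex orderings, so that the unique walk decomposition above translates into the matrix product in exactly the right order (and so that the ``disjointness except at the glued vertices'' of $\Gamma_n$ and $\overline{\Gamma}^{\ast}_{n-1}$ is read off correctly from the construction). Once the source/sink labellings of $\Gamma_n$ and $\overline{\Gamma}^{\ast}_{n-1}$ are matched as in Definitions~\ref{def+gamma+GG1} and~\ref{definition+star+g}, the transfer-matrix method does the rest, and no computation beyond Proposition~\ref{prop+Pn} is required.
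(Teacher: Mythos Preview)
Your proof is correct and follows essentially the same approach as the paper: the paper states (just before the proposition) that $\overline{B}^{\ast}(n-1)=\left(\begin{smallmatrix}1&0\\0&M_{n-1}\end{smallmatrix}\right)$ and then asserts the result follows from equality~(\ref{A}) and the transfer-matrix method, which is exactly your inductive argument with the base case and the gluing factorization made explicit. Your version has the virtue of spelling out the induction and the ordering bookkeeping that the paper leaves implicit.
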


Giving the weighted digraph interpretation of the matrix $M$, we
have the following combinatorial interpretation for the
corresponding Toeplitz matrix $\mathcal{T}$ of the each row sequence
of $M$.

\begin{definition}\label{def+gamma+GG}
Define a weighted digraph, denoted by
$\Gamma^{\diamond}_k=(V^{\diamond}_k, \vec{E}^{\diamond}_k)$, as
follows: Let $\Gamma^{\diamond}_{k}$ denote the subgraph of
$\Gamma^{\ast}_{n+k}$ induced by the union of arcs of all directed
paths from $Q_{i}^{(n+i)}$ to $Q_{n+j}^{(0)} ~(0\leq i,j\leq k)$,
where vertices $Q_{0}^{(n)},Q_{1}^{(n+1)},\ldots,Q_{k}^{(n+k)}$
(resp. $Q_{n}^{(0)}, Q_{n+1}^{(0)},\ldots,Q_{n+k}^{(0)}$) are viewed
as the source vertices (resp. the sink vertices) of
$\Gamma^{\diamond}_k$.
\end{definition}

For the weighted digraph $\Gamma^{\diamond}_k$ in Definition \ref{def+gamma+GG},
we denote by $B^\diamond(k)$ its walk matrix. In what follows
we will present a relation between $B^\diamond(k)$
and the Toeplitz matrix $\mathcal{T}$ of the $n$th row sequence of $M$.

\begin{prop}\label{prop+Tk+g}
For the Toeplitz matrix $\mathcal{T}$ of the $n$th row sequence of
$M$, its $k$th order leading principal minor equals the walk matrix
$B^{\diamond}(k)$ for the weighted digraph $\Gamma^{\diamond}_k$.
\end{prop}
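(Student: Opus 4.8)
The plan is to reproduce, \emph{mutatis mutandis}, the argument used for Proposition \ref{prop+T+TG+trid} in the tridiagonal case, now relying on the general-$\ell$ constructions of Definitions \ref{def+gamma+GG1}, \ref{definition+star+g} and \ref{def+gamma+GG} together with Proposition \ref{prop+GG+M}. From Definition \ref{def+gamma+GG} we read off the rows of $B^{\diamond}(k)$ via the source vertices $Q_i^{(n+i)}$ and the columns via the sink vertices $Q_{n+j}^{(0)}$, for $0\le i,j\le k$. Since the $(i,j)$ entry of the Toeplitz matrix $\mathcal{T}$ of the $n$th row sequence of $M$ is precisely $M_{n,i-j}$, the proposition reduces to the single identity $B_{i,j}^{\diamond}=M_{n,i-j}$.

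First I would invoke Proposition \ref{prop+GG+M}, which gives $B^{\ast}(m)=M_m$ for every $m$. Reading the rows of $B^{\ast}(n)$ by $Q_i^{(n)}$ and the columns by $Q_{n-i+j}^{(0)}$ yields $M_{n-i,\,i-j}=B^{\ast}_{n-i,\,i-j}$; replacing $n$ by $n+i$ then gives $M_{n,\,i-j}=B^{\ast}_{n,\,i-j}$, which by the definition of the walk matrix is the total weight of all walks from $Q_i^{(n+i)}$ to $Q_{n+j}^{(0)}$ inside $\Gamma^{\ast}_{n+i}$.

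Next I would observe that, by the recursive construction in Definition \ref{definition+star+g}, both $\Gamma^{\ast}_{n+i}$ and $\Gamma^{\diamond}_{k}$ are subgraphs of $\Gamma^{\ast}_{n+k}$, and that the set of walks in $\Gamma^{\ast}_{n+k}$ from $Q_i^{(n+i)}$ to $Q_{n+j}^{(0)}$ coincides with the set of such walks already living in $\Gamma^{\ast}_{n+i}$: the arcs added at the later stages $n+i+1,\ldots,n+k$ either connect only the newly adjoined top vertices $Q_{\cdot}^{(\cdot)}$ among themselves or belong to copies of $\Gamma_{m}$ with $m>n+i$, none of which can be traversed on a walk that starts no higher than $Q_i^{(n+i)}$ and ends at $Q_{n+j}^{(0)}$ with $j\le k$. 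Since $\Gamma^{\diamond}_k$ is, by Definition \ref{def+gamma+GG}, exactly the union of all directed $Q_i^{(n+i)}\to Q_{n+j}^{(0)}$ paths, it carries the same walk set as well. Combining these facts gives $B_{i,j}^{\diamond}=B^{\ast}_{n,\,i-j}=M_{n,\,i-j}$, as required.

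The step I expect to be the main obstacle is the middle claim that enlarging $\Gamma^{\ast}_{n+i}$ to $\Gamma^{\ast}_{n+k}$ introduces no new walks between the relevant endpoints. This requires a careful inspection of the arc set described in Definitions \ref{def+gamma+GG1} and \ref{definition+star+g}, to verify that every arc whose tail is reachable from $Q_i^{(n+i)}$ and whose head can still reach $Q_{n+j}^{(0)}$ already belongs to $\Gamma^{\ast}_{n+i}$. Because all arcs point strictly to the right and never carry a walk below the horizontal level of the chosen source, this is a routine monotonicity argument, entirely parallel to the one implicit in the proof of Proposition \ref{prop+T+TG+trid}, but it is the only place where the nested structure of the $\Gamma^{\ast}_m$'s must be used in an essential way.
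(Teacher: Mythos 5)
Your proposal is correct and follows essentially the same route as the paper: reduce to the identity $B_{i,j}^{\diamond}=M_{n,i-j}$, obtain $M_{n,i-j}=B^{\ast}_{n,i-j}$ from Proposition \ref{prop+GG+M} by the same re-indexing and the shift $n\mapsto n+i$, and then note that $\Gamma^{\diamond}_k$ and $\Gamma^{\ast}_{n+i}$, both subgraphs of $\Gamma^{\ast}_{n+k}$, carry the same set of walks from $Q_i^{(n+i)}$ to $Q_{n+j}^{(0)}$. The only difference is that you spell out the monotonicity argument for the walk-set coincidence, which the paper dismisses as evident; this is a harmless (indeed welcome) elaboration, not a deviation.
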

\textbf{Proof:} Note $\mathcal{T}_{i,j}=M_{n,i-j}$. It suffices to
prove that
$$ M_{n,i-j}=B_{i,j}^{\diamond},$$ where the
vertex $Q_{i}^{(n+i)}$ (resp. $Q_{n+j}^{(0)}$) of
${V}^{\diamond}_{k}$ is used as the row (resp. column) index $i$
(resp. $j$). This immediately follows from the next two claims.

\begin{cl}\label{TClaim 1}
For $0\leq j\leq i\leq n$, let $B_{n,i-j}^{\ast}$ be the sum of the weights of walks from
$Q_{i}^{(n+i)}$ to $Q_{n+j}^{(0)}$ in ${V}^{\ast}_{n+i}$. Then
$$M_{n,i-j}=B_{n,i-j}^{\ast}.$$
\end{cl}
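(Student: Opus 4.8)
The plan is to obtain Claim \ref{TClaim 1} as an immediate consequence of Proposition \ref{prop+GG+M}, after a harmless re-indexing. Proposition \ref{prop+GG+M} identifies the walk matrix $B^{\ast}(m)$ of the digraph $\Gamma^{\ast}_{m}$ with the $m$th order leading principal submatrix $M_{m}$ of $M$, in the vertex-to-index convention used throughout this section (the same one underlying Propositions \ref{prop+Pn} and \ref{prop+GG+M}): the entry $B^{\ast}(m)_{a,b}$ is the total weight of walks in $\Gamma^{\ast}_{m}$ from the source vertex $Q_{m-a}^{(m)}$ to the sink vertex $Q_{m-b}^{(0)}$, so that $M_{a,b}$ equals that walk weight whenever $0\le a,b\le m$. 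Thus the whole groundwork (reduction through $\widetilde{P}_n$, the decomposition \eqref{L}, and the transfer-matrix method) is already in place, and Claim \ref{TClaim 1} is a matter of substituting the right indices.

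Concretely, I would apply this identity with $m=n+i$. Since $0\le j\le i\le n$, one has $0\le n\le n+i$ and $0\le i-j\le n\le n+i$, so $(a,b)=(n,\,i-j)$ is a legitimate index pair for $M_{n+i}$, and correspondingly $Q_{i}^{(n+i)}=Q_{(n+i)-n}^{(n+i)}$ is a source vertex of $\Gamma^{\ast}_{n+i}$ while $Q_{n+j}^{(0)}=Q_{(n+i)-(i-j)}^{(0)}$ is a sink vertex of $\Gamma^{\ast}_{n+i}$. Reading off the convention, the total weight of walks from $Q_{i}^{(n+i)}$ to $Q_{n+j}^{(0)}$ in $\Gamma^{\ast}_{n+i}$ — which is precisely the quantity the claim denotes $B_{n,i-j}^{\ast}$ — equals $B^{\ast}(n+i)_{n,\,i-j}=(M_{n+i})_{n,\,i-j}=M_{n,i-j}$ by Proposition \ref{prop+GG+M}. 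This is exactly the assertion of Claim \ref{TClaim 1}.

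There is no serious obstacle here; the step needing the most care is just pinning down the index bookkeeping — in particular verifying that after the substitution $m=n+i$ the vertex $Q_{i}^{(n+i)}$ still lies among the source vertices and $Q_{n+j}^{(0)}$ among the sink vertices of $\Gamma^{\ast}_{n+i}$, which is guaranteed by the hypothesis $0\le j\le i\le n$. In the write-up I would also record for completeness that $\Gamma^{\ast}_{n+i}$ is acyclic, since in the recursive construction of Definition \ref{definition+star+g} every arc moves strictly to the right, so its walk matrix is a well-defined element of the formal-power-series ring in the edge weights; and I would emphasize that at this stage all walks under consideration lie entirely inside $\Gamma^{\ast}_{n+i}$, the comparison with walks in the larger digraph $\Gamma^{\ast}_{n+k}$ (needed to pass to $\Gamma^{\diamond}_{k}$) being the content of the companion claim rather than of Claim \ref{TClaim 1}.
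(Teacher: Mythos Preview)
Your argument is correct and is essentially the same as the paper's own proof: both deduce the claim directly from Proposition~\ref{prop+GG+M} by applying it to the digraph $\Gamma^{\ast}_{n+i}$ and reading off the appropriate matrix entry after the substitution $m=n+i$, $(a,b)=(n,i-j)$. Your index bookkeeping is, if anything, slightly cleaner than the paper's two-step substitution.
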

\textbf{Proof:} In fact, the vertex $Q_{n-i}^{(n)}$ (resp. $Q_{n-j}^{(0)}$) of ${V}^{\ast}_{n}$
is regarded as the row (resp. column) index $i$ (resp. $j$) of the walk matrix $B^{\ast}(n)$, then
$B^{\ast}_{i,j}$ is the total weight of walks from $Q_{n-i}^{(n)}$ to $Q_{n-j}^{(0)}$.
After taking $n-i \rightarrow i$ and $i-j\rightarrow j$, we
obtain that $B^{\ast}_{n-i,i-j}$ is the total weight of walks from $Q_{i}^{(n)}$ to $Q_{n-i+j}^{(0)}$.
By Proposition \ref{prop+GG+M}, we have
\begin{equation}\label{TY}
  M_{n-i,i-j}=B_{n-i,i-j}^{\ast}.
\end{equation}
Taking $n+i \rightarrow n$ in (\ref{TY}) yields Claim \ref{TClaim
1}.

\begin{cl}\label{TClaim 2}
For $0\leq j\leq i\leq n$, we have
$$B_{n,i-j}^{\ast}=B_{i,j}^{\diamond}.$$
\end{cl}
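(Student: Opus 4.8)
The plan is to argue that both $B_{n,i-j}^{\ast}$ and $B_{i,j}^{\diamond}$ count exactly the same family of weighted walks, so that the equality is really an identity of sums over a common index set. Recall that $B_{n,i-j}^{\ast}$ is, by definition, the total weight of walks from $Q_{i}^{(n+i)}$ to $Q_{n+j}^{(0)}$ inside the digraph $\Gamma^{\ast}_{n+i}$, while $B_{i,j}^{\diamond}$ is the total weight of walks from the same source vertex $Q_{i}^{(n+i)}$ to the same sink vertex $Q_{n+j}^{(0)}$ but inside $\Gamma^{\diamond}_{k}$, which by Definition \ref{def+gamma+GG} is a subgraph of $\Gamma^{\ast}_{n+k}$. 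Since $0\le j\le i\le n$ we have $i\le k$, so $\Gamma^{\ast}_{n+i}$ is (by the recursive construction in Definition \ref{definition+star+g}, where each $\Gamma^{\ast}_{m}$ sits inside $\Gamma^{\ast}_{m+1}$) a subgraph of $\Gamma^{\ast}_{n+k}$ as well. Thus both walk-sets live inside the single ambient digraph $\Gamma^{\ast}_{n+k}$, and it suffices to show they coincide as sets of walks (the weights then automatically agree, being products of edge weights in the common ambient graph).

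First I would show that every walk from $Q_{i}^{(n+i)}$ to $Q_{n+j}^{(0)}$ in $\Gamma^{\ast}_{n+k}$ actually stays inside $\Gamma^{\ast}_{n+i}$. The key structural fact is that in the recursive build-up of $\Gamma^{\ast}_{m}$, the only arcs leaving the ``level-$\le n+i$'' portion of the vertex set toward higher levels are the horizontal arcs $Q_{m}^{(s+1)}\to Q_{m}^{(s)}$ added at stage $m>n+i$, and these emanate from vertices $Q_{m}^{(s)}$ with $m>n+i$, which are not reachable from $Q_{i}^{(n+i)}$ once one tracks the superscript/subscript bookkeeping: a directed step in $\Gamma^{\ast}$ never increases the relevant ``row index'' past that of its source. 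Conversely, the arcs of $\Gamma_{n+i}$ and of $\overline{\Gamma}^{\ast}_{n+i-1}$ are all present in $\Gamma^{\ast}_{n+k}$ by construction, so no walk is lost. Hence the walk-set from $Q_{i}^{(n+i)}$ to $Q_{n+j}^{(0)}$ computed in $\Gamma^{\ast}_{n+i}$ equals the one computed in $\Gamma^{\ast}_{n+k}$.

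Next I would show that this same walk-set is exactly the one that $\Gamma^{\diamond}_{k}$ records between these two vertices. By Definition \ref{def+gamma+GG}, $\Gamma^{\diamond}_{k}$ is the subgraph of $\Gamma^{\ast}_{n+k}$ induced by the union of arcs of \emph{all} directed paths $Q_{i'}^{(n+i')}\to Q_{n+j'}^{(0)}$ for $0\le i',j'\le k$; in particular, since our source and sink are among these with $i'=i$, $j'=j$, every walk (not just path --- but the graph is acyclic, so walks are paths) from $Q_{i}^{(n+i)}$ to $Q_{n+j}^{(0)}$ in $\Gamma^{\ast}_{n+k}$ uses only arcs that lie on such a directed path, hence survives into $\Gamma^{\diamond}_{k}$; and passing to an induced subgraph cannot create new walks. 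Therefore the walk-set between the two vertices is the same in $\Gamma^{\diamond}_{k}$ as in $\Gamma^{\ast}_{n+k}$, which we have just identified with the one in $\Gamma^{\ast}_{n+i}$. Combining the two identifications gives $B_{n,i-j}^{\ast}=B_{i,j}^{\diamond}$, completing the proof; together with Claim \ref{TClaim 1} this yields $M_{n,i-j}=B_{i,j}^{\diamond}$ and hence Proposition \ref{prop+Tk+g}.

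The step I expect to be the main obstacle is the first one: rigorously verifying that no walk starting at $Q_{i}^{(n+i)}$ can escape the subgraph $\Gamma^{\ast}_{n+i}$ into the higher-level pieces of $\Gamma^{\ast}_{n+k}$. This is ``topologically obvious'' from the pictures (Figures 14--16), but making it precise requires a careful monotonicity invariant on the vertex labels $Q_{s}^{(m)}$ along any directed arc --- essentially that each arc weakly decreases $m$ and the few arcs that preserve $m$ (the glueing arcs $Q_{m}^{(s+1)}\to Q_{m}^{(s)}$) are only available once stage $m$ has been reached, so they are unreachable from a stage-$(n+i)$ source when $m>n+i$. Once this invariant is stated and checked against the four arc types in Definition \ref{def+gamma+GG1} plus the glueing arcs of Definition \ref{definition+star+g}, the rest is bookkeeping.
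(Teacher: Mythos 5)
Your proposal is correct and takes essentially the same route as the paper: the paper's own proof of this claim is a one-sentence assertion that the set of walks from $Q_{i}^{(n+i)}$ to $Q_{n+j}^{(0)}$ in $\Gamma^{\diamond}_{k}$ coincides with that in $\Gamma^{\ast}_{n+i}$, and your argument merely fleshes out exactly that identification inside the common ambient digraph $\Gamma^{\ast}_{n+k}$ (walks cannot escape the nested subgraph $\Gamma^{\ast}_{n+i}$, and $\Gamma^{\diamond}_{k}$ retains every arc of such a walk by its definition as the union of arcs of all source-to-sink paths). Two presentational nitpicks, not gaps: the bound $i\le k$ is part of the setting of Proposition \ref{prop+Tk+g} (it is needed for $B^{\diamond}_{i,j}$ to be defined) rather than a consequence of $i\le n$, and your monotonicity invariant is better phrased as weak rightward motion in the strip/superscript index, since the glueing arcs $Q_{m}^{(s+1)}\to Q_{m}^{(s)}$ decrease the superscript while preserving the subscript $m$.
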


\textbf{Proof:} In fact, it is easy to see that the set of walks from
$Q_{i}^{(n+i)}$ to $Q_{n+j}^{(0)}$ in $\Gamma^{\diamond}_{k}$ is the same
as that in $\Gamma^{\ast}_{n+i}$, which is Claim \ref{TClaim 2}.

This completes the  proof of Proposition \ref{prop+Tk+g}.\qed

Finally, if $b_n=\gamma$ for $n\geq0$, using the weighted digraph
$\Gamma^{\ast}_n$, we shall construct a weighted digraph for the
Toeplitz matrix of the sequence $(M_{n+\delta i,k+\sigma i})_{i}$
with $\sigma>\delta$.

\begin{definition}\label{def+gamma+GG1}
Define a weighted digraph, denoted by
$\Gamma^{\circ}_m=(V^{\circ}_m, \vec{E}^{\circ}_m)$, as
follows: Let $\Gamma^{\circ}_{m}$ denote the subgraph of
$\Gamma^{\ast}_{n+m\sigma}$ induced by the union of arcs of all directed
paths from $R_{(\sigma-\delta)i}^{(n+i\sigma)}$ to
$R_{n-k+(\sigma-\delta)j}^{(n-k+j\sigma)} ~(0\leq i,j\leq m)$,
where vertices $R_{0}^{(n)},R_{\sigma-\delta}^{(n+\sigma)},
\ldots,R_{(\sigma-\delta)m}^{(n+m\sigma)}$ (resp. $R_{n-k}^{(n-k)},
R_{n-k+\sigma-\delta}^{(n-k+\sigma)},\ldots,R_{n-k+(\sigma-\delta)m}^{(n-k+m\sigma)}$) are viewed as the source vertices
(resp. the sink vertices) of $\Gamma^{\circ}_m$.
\end{definition}

We denote by $B^\circ(m)$ the walk matrix of $\Gamma^{\circ}_m$. The
following consequence indicates the relation between $B^\circ(m)$
and the Toeplitz matrix of the sequence $(M_{n+\delta i,k+\sigma i})_{i}$
with $\sigma>\delta$.

\begin{prop}\label{prop+Tm+the sequence+Toeplitz}
For the Toeplitz matrix of the sequence $(M_{n+\delta i,k+\sigma
i})_{i}$ with $\sigma>\delta$, its $m$th order leading principal
minor equals the walk matrix $B^{\circ}(m)$ for $\Gamma^{\circ}_m$.
\end{prop}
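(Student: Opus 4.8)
\textbf{Proof of Proposition \ref{prop+Tm+the sequence+Toeplitz} (plan).}
The plan is to mimic the proofs of Propositions \ref{prop+T+TG+trid} and \ref{prop+Tk+g}. Write $\mathcal{T}$ for the Toeplitz matrix of $(M_{n+\delta i,k+\sigma i})_i$, so that $\mathcal{T}_{i,j}=M_{n+\delta(i-j),k+\sigma(i-j)}$ when $i\ge j$ and $\mathcal{T}_{i,j}=0$ otherwise, and regard $B^{\circ}_{i,j}$ as the total weight of the walks in $\Gamma^{\circ}_m$ from the source $R_{(\sigma-\delta)i}^{(n+i\sigma)}$ to the sink $R_{n-k+(\sigma-\delta)j}^{(n-k+j\sigma)}$ $(0\le i,j\le m)$. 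Thus it suffices to prove $M_{n+\delta(i-j),k+\sigma(i-j)}=B^{\circ}_{i,j}$, and I would do this by two claims, exactly parallel to the proof of Proposition \ref{prop+Tk+g}: first relate $B^{\circ}_{i,j}$ to a walk count in the larger, already understood digraph $\Gamma^{\ast}$, and then invoke Proposition \ref{prop+GG+M}.

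Put $N=n+i\sigma$, $a=(\sigma-\delta)i$, $N'=n-k+j\sigma$, $a'=n-k+(\sigma-\delta)j$, so the source and sink are $R_a^{(N)}$ and $R_{a'}^{(N')}$; a direct computation gives $N-a=n+\delta i$, $N'-a'=\delta j$, hence $(N-a)-(N'-a')=n+\delta(i-j)$ and $N-N'=k+\sigma(i-j)$. The key step will be the following \emph{Key Lemma}: for vertices $R_a^{(N)},R_{a'}^{(N')}$ of $\Gamma^{\ast}_N$ with $N\ge N'$, the total weight of the walks from $R_a^{(N)}$ to $R_{a'}^{(N')}$ in $\Gamma^{\ast}_N$ equals $M_{(N-a)-(N'-a'),\,N-N'}$ (and it is $0$ when $N<N'$). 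Granting this, the standing hypotheses $0<\delta<\sigma$ and $k<\sigma$ do the bookkeeping: they force $R_a^{(N)},R_{a'}^{(N')}$ to be genuine vertices with strictly increasing indices, and for $i<j$ they give $(j-i)\sigma>k$, i.e.\ source block index $N$ strictly below sink block index $N'$, so no walk exists and $B^{\circ}_{i,j}=0=\mathcal{T}_{i,j}$; for $i\ge j$ the Key Lemma produces exactly $M_{n+\delta(i-j),k+\sigma(i-j)}$. The second claim is the routine ``the walk set is unchanged'' observation: since $N=n+i\sigma\le n+m\sigma$ and every directed path from $R_a^{(N)}$ to $R_{a'}^{(N')}$ lives inside the blocks $\Gamma_N,\Gamma_{N-1},\ldots,\Gamma_{N'}$, that collection of paths is the same whether computed in $\Gamma^{\ast}_N$, in $\Gamma^{\ast}_{n+m\sigma}$, or in the induced subgraph $\Gamma^{\circ}_m$, so their weighted counts agree.

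The Key Lemma is the only real work, and this is where $b_n\equiv\gamma$ is used. The mechanism I would use is a forced-path argument at both ends of the diagonal. Since $R_a^{(N)}=(a+1,a+1)$ lies on the main diagonal of block $\Gamma_N$ and, off the main diagonal, no diagonal arc is available and no jump arc occurs before the column $x=N+1$, the unique directed path from $Q_a^{(N)}=(1,a+1)$ to $(a+1,a+1)$ is a string of $a$ weight-$1$ horizontal steps; every walk leaving $Q_a^{(N)}$ must traverse it, so walks from $R_a^{(N)}$ and walks from $Q_a^{(N)}$ have the same weighted count to any target. Dually, a walk ending at $R_{a'}^{(N')}=(a'+1,a'+1)$ must enter block $\Gamma_{N'}$ at some $Q_{c'}^{(N')}=(1,c'+1)$ with $0\le c'\le a'$ and then follow the forced path that goes right to $(c'+1,c'+1)$ and then diagonally up, which has weight $\gamma^{a'-c'}$ because $b_n\equiv\gamma$; hence the weighted count of walks $R_a^{(N)}\to R_{a'}^{(N')}$ equals $\sum_{c'=0}^{a'}\gamma^{a'-c'}\,w\!\left(Q_a^{(N)}\to Q_{c'}^{(N')}\right)$, where $w(\cdot)$ is the weighted count over the top $N-N'$ blocks. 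Finally, iterating the one-block recursion (\ref{A}) for $M$ a total of $N-N'$ times expresses $M_N$ as a product of the one-block walk matrices $\widetilde P_N,\ldots,\widetilde P_{N'+1}$ times a padded copy of $M_{N'}$; by the transfer-matrix method together with Proposition \ref{prop+GG+M} this product is exactly the matrix of the weights $w\!\left(Q_\bullet^{(N)}\to Q_\bullet^{(N')}\right)$, and substituting the first column $M_{N',0}=\gamma^{N'}$ lets one match the two expansions and read off $M_{(N-a)-(N'-a'),\,N-N'}$.

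The hard part will be precisely this last index-chase: keeping the matrix padding, the source-versus-sink re-indexing convention $Q_{n-i}^{(n)}\leftrightarrow$ row $i$ of Proposition \ref{prop+GG+M}, and the summation ranges all mutually consistent, so that the forced-diagonal factor $\gamma^{a'-c'}$ recombines exactly with the $\gamma^{N'}$ column of $M_{N'}$; once that is pinned down, the rest is the standard Lindstr\"om--Gessel--Viennot packaging already used above. \qed
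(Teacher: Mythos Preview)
Your overall architecture is right, and your Key Lemma is exactly the statement the paper isolates (its Claim~\ref{Claim 3}): $P(R_a^{(N)},R_{a'}^{(N')})=M_{(N-a)-(N'-a'),\,N-N'}$. The routine ``same set of walks in $\Gamma^{\circ}_m$, $\Gamma^{\ast}_N$ and $\Gamma^{\ast}_{n+m\sigma}$'' step is also fine. The gap is in your proposed proof of the Key Lemma.

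Your sink-side decomposition gives
\[
P(Q_a^{(N)},R_{a'}^{(N')})=\sum_{c'=0}^{a'}\gamma^{a'-c'}\,W_{N-a,\,N-c'},
\]
where $W_{p,q}=P(Q_{N-p}^{(N)},Q_{N-q}^{(N')})$ is the partial walk matrix. On the other hand, iterating (\ref{A}) and reading off column $N-N'$ of $M_N$ gives
\[
M_{(N-a)-(N'-a'),\,N-N'}=\sum_{c=0}^{N'}\gamma^{N'-c}\,W_{(N-a)-(N'-a'),\,N-c}.
\]
After the obvious index shift, equality of these two sums amounts to
\[
W_{N-a,\,N-c'}=W_{(N-a)-(N'-a'),\,(N-c')-(N'-a')}\qquad(0\le c'\le a'),
\]
i.e.\ a diagonal (Toeplitz) invariance of $W$. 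This is \emph{not} mere index-chasing: $\widetilde P$ is not Toeplitz (its $0$th column $\gamma^i$ does not match the pattern of columns $j\ge 1$), so the Toeplitz property of the product $W$ has to be argued graph-theoretically and is precisely where the hypothesis $b_n\equiv\gamma$ (and $a_n^{(i)}$ independent of $n$) is spent. Your write-up treats this as bookkeeping, but it is the substantive step.

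The paper avoids your $\gamma^{a'-c'}$ sum altogether. It first shows (Claim~\ref{Claim 1}) that when the sink is already on the boundary, i.e.\ $N'=a'$, one has $P(R_{a}^{(N)},R_{a'}^{(a')})=M_{N-a,\,N-a'}$; this uses only forced weight-$1$ paths at both ends (from $Q_a^{(N)}$ to $R_a^{(N)}$ on the left, and from $R_{a'}^{(a')}$ along the top rows to $Q_{a'}^{(0)}$ on the right), so no $\gamma$ factors appear and Proposition~\ref{prop+GG+M} applies directly. It then proves a translation invariance (Claim~\ref{Claim 2}):
\[
P(R_a^{(d)},R_b^{(c)})=P(R_a^{(b+d-c)},R_b^{(b)})\qquad(0\le a\le b\le c\le d),
\]
by contracting the maximal weight-$1$ horizontal segments and observing that, with constant weights, the two contracted subgraphs are isomorphic. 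Combining the two gives the Key Lemma. If you want to salvage your route, the cleanest fix is to prove exactly this translation invariance for the $R$-vertices (or equivalently for $W$), rather than hoping the $\gamma$ factors recombine on their own.
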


\textbf{Proof:} The $(i,j)$ element of the Toeplitz matrix of of the
sequence $(M_{n+\delta i,k+\sigma i})_{i}$ equals $M_{n+(i-j)\delta,k+(i-j)\sigma}$.
So taking the vertex $R_{(\sigma-\delta)i}^{(n+i\sigma)}$ in
$V^{\circ}_m$ as the row index $i$ and $R_{n-k+(\sigma-\delta)j}^{(n-k+j\sigma)}$
as the column index $j$ of the walk matrix $B^{\circ}(m)$, we just need to prove
\begin{eqnarray}\label{eq+Toeplitz}
B^{\circ}_{i,j}=M_{n+(i-j)\delta,k+(i-j)\sigma}.
\end{eqnarray}
Before do it, we will need some auxiliary claims.

\begin{cl}\label{Claim 1}
Choose vertices $R^{(n)}_n,R^{(n)}_{n-1},\ldots,R^{(n)}_0$ (resp.
$R_{n}^{(n)},R_{n-1}^{(n-1)},\ldots,R_{0}^{(0)}$) as the source
vertices (resp. the sink vertices) in $\Gamma^{\ast}_n$. Then the
walk matrix for the new weighted  digraph is also equal to $M_n$.
\end{cl}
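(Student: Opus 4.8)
The plan is to establish the claim by exhibiting, for each pair consisting of a new source $R^{(n)}_i$ and a new sink $R^{(m)}_m$, a weight-preserving bijection between the walks of $\Gamma^{\ast}_n$ connecting them and the walks of $\Gamma^{\ast}_n$ connecting the old source $Q^{(n)}_i$ to the old sink $Q^{(0)}_m$. Once this is done, the equality of walk matrices follows at once from Proposition \ref{prop+GG+M}, provided one also checks that the ordering of the $R$-vertices as sources (resp. sinks) matches the ordering of the $Q$-vertices under which $B^{\ast}(n)$ was identified with $M_n$. The bijection is assembled from two separate ``forced segment'' observations, one near the source and one near the sink.

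First I would handle the source end. In the copy $\Gamma_n$ both $Q^{(n)}_i=(1,i+1)$ and $R^{(n)}_i=(i+1,i+1)$ lie in $\Gamma_n$, and inspecting the arcs of Definition \ref{def+gamma+GG1} shows that out of any vertex $(s,i+1)$ with $1\le s\le i$ the only outgoing arc is the horizontal arc of weight $1$ to $(s+1,i+1)$, since the diagonal arcs available in that part of $\Gamma_n$ occur only on the main diagonal $x=y$. Hence every walk starting at $Q^{(n)}_i$ begins with the unique weight-$1$ path $Q^{(n)}_i\to(2,i+1)\to\cdots\to R^{(n)}_i$, and deleting this prefix is a weight-preserving bijection from walks out of $Q^{(n)}_i$ onto walks out of $R^{(n)}_i$, to any fixed target. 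In particular, the walk matrix of $\Gamma^{\ast}_n$ with $R$-sources and $Q$-sinks already equals $M_n$.

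Next comes the sink end, which I expect to be the crux. Here I would show that every walk of $\Gamma^{\ast}_n$ ending at the sink $Q^{(0)}_m$ passes through the vertex $R^{(m)}_m=(m+1,m+1)$ of the copy $\Gamma_m$, and that the portion of such a walk from $R^{(m)}_m$ onward is forced and has weight $1$. Indeed, to reach $Q^{(0)}_m$ the walk first arrives at $Q^{(m-1)}_m=(m+\ell+1,m+1)$, the unique sink of $\Gamma_m$ at the maximal height $m+1$, and then runs down the weight-$1$ chain $Q^{(m-1)}_m\to Q^{(m-2)}_m\to\cdots\to Q^{(0)}_m$ adjoined in Definition \ref{definition+star+g}. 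Within $\Gamma_m$, the first arc that lifts a walk up to height $m+1$ is necessarily the main-diagonal arc $(m,m)\to(m+1,m+1)$: horizontal arcs do not change the height, while the diagonal arcs of the band $[m+2,m+\ell]\times[1,m-1]$ and the arcs of types (iii) and (iv) are all barred from ending at height $m+1$ by the index restrictions $j\le m-1$ and $j\le m-t$ together with $t\ge 1$. So the walk must visit $R^{(m)}_m$, and from $(m+1,m+1)$ — where, because $t\ge1$, the only outgoing arc is the weight-$1$ horizontal one — it is forced along $(m+1,m+1)\to(m+2,m+1)\to\cdots\to Q^{(m-1)}_m$ and then down the chain, all of weight $1$. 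Appending this forced weight-$1$ suffix is thus a weight-preserving bijection from walks ending at $R^{(m)}_m$ onto walks ending at $Q^{(0)}_m$.

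Composing the two bijections yields, for each $i$ and $m$, that the total weight of walks $R^{(n)}_i\to R^{(m)}_m$ in $\Gamma^{\ast}_n$ equals the total weight of walks $Q^{(n)}_i\to Q^{(0)}_m$, i.e. the corresponding entry of $M_n$ by Proposition \ref{prop+GG+M}; the boundary case $m=0$ is immediate, reading $R^{(0)}_0$ as the terminal sink $Q^{(0)}_0$. The step I expect to be most delicate is the sink-end argument: one must verify carefully that the ``maximal height only via the main diagonal'' phenomenon holds inside every copy $\Gamma_m$ simultaneously within the glued graph $\Gamma^{\ast}_n$, that it genuinely requires $t\ge1$ (for $t=0$ an extra arc of type (iii) leaves $(m+1,m+1)$ and the suffix ceases to be forced), and that the row/column relabelling by $R$-vertices is consistent with the one used for $B^{\ast}(n)=M_n$.
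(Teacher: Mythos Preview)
Your proposal is correct and follows essentially the same approach as the paper: both argue that every walk from $Q^{(n)}_{i}$ to $Q^{(0)}_{m}$ factors through $R^{(n)}_{i}$ and $R^{(m)}_{m}$ with forced weight-$1$ prefix and suffix, so that $P(Q^{(n)}_{i},Q^{(0)}_{m})=P(R^{(n)}_{i},R^{(m)}_{m})$ and the claim follows from Proposition~\ref{prop+GG+M}. The paper states the factorization rather tersely (``the directed paths \ldots\ can be divided into three segments''), whereas you supply the explicit verification that the only arc in $\Gamma_m$ reaching height $m+1$ from below is the main-diagonal step into $R^{(m)}_m$ and that no further arc leaves $R^{(m)}_m$ when $t\ge 1$; this extra care, including your observation that the argument genuinely needs $t\ge 1$, is a welcome addition but not a different method.
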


\textbf{Proof:} Let $P(u,v)$ denote the sum of the weights of all
directed paths from $u$ to $v$. By Definition
\ref{definition+star+g}, vertices
$Q^{(n)}_n,Q^{(n)}_{n-1},\ldots,Q^{(n)}_0$ (resp.
$Q_{n}^{(0)},Q_{n-1}^{(0)},\ldots,Q_{0}^{(0)}$) are viewed as the
source vertices (resp. the sink vertices) of $\Gamma^{\ast}_n$. So
$B^{*}_{i,j}=P(Q^{(n)}_{n-i},Q^{(0)}_{n-j})$. It follows from
Proposition \ref{prop+GG+M} that
\begin{eqnarray}\label{eq+Tm5}
P(Q^{(n)}_{n-i},Q^{(0)}_{n-j})=M_{i,j}.
\end{eqnarray}
The directed paths from $Q^{(n)}_{n-i}$ to $Q^{(0)}_{n-j}$
can be divided into three segments by the two points $R^{(n)}_{n-i}$ and $R^{(n-j)}_{n-j}$.
In fact, there is exactly one directed path from $Q_{n-i}^{(n)}$ (resp. $R^{(n-j)}_{n-j}$)
to $R_{n-i}^{(n)}$ (resp. $Q^{(0)}_{n-j}$) for $0\leq i\leq n$ (resp. $0\leq j\leq n$)
and the weight of all the arcs in this path are $1$, then we obtain
\begin{eqnarray}\label{eq+Tm6}
P(Q^{(n)}_{n-i},Q^{(0)}_{n-j})=P(R^{(n)}_{n-i},R^{(n-j)}_{n-j}).
\end{eqnarray}
By (\ref{eq+Tm5}) and (\ref{eq+Tm6}), we have
$$P(R^{(n)}_{n-i},R^{(n-j)}_{n-j})=M_{i,j}.$$
This shows that Claim \ref{Claim 1} holds.\qed

\begin{cl}\label{Claim 2}
For the weighted digraph $\Gamma^{\ast}_{n}$,
 if $0\leq a\leq b \leq c\leq d \leq n$, then we have
 $$P(R^{(d)}_{a},R^{(c)}_{b})=P(R^{(b+d-c)}_{a},R^{(b)}_{b}).$$
 \end{cl}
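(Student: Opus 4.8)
The plan is to exploit the recursive, "staircase" structure of $\Gamma^{\ast}_n$ together with Claim \ref{Claim 1}. First I would recall from Definition \ref{definition+star+g} that $\Gamma^{\ast}_n$ is built from $\Gamma^{\ast}_{n-1}$ (enlarged to $\overline{\Gamma}^{\ast}_{n-1}$) by gluing in a fresh copy of $\Gamma_n$ along the vertices $Q^{(n-1)}_i$. Iterating this, for any $e \le n$ the subgraph of $\Gamma^{\ast}_n$ spanned by all directed paths issuing from the vertices $R^{(e)}_{\bullet}$ (the diagonal vertices on the $e$-th level) is, up to the trivial horizontal "carry" arcs of weight $1$ that prolong a walk to higher-indexed sink columns, a translate of $\Gamma^{\ast}_e$. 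The key elementary observation, which I would isolate first, is: the portion of $\Gamma^{\ast}_n$ relevant to walks \emph{starting} at level-$d$ diagonal vertices $R^{(d)}_a$ depends only on the first $d$ levels, i.e.\ on $\Gamma^{\ast}_d$, and more precisely the set of walks from $R^{(d)}_a$ to any vertex at level $\le d$ is canonically identified with the set of walks in $\Gamma^{\ast}_d$ with the same endpoints.

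Next I would use Claim \ref{Claim 1}, which tells us that with the diagonal vertices $R^{(n)}_n,\dots,R^{(n)}_0$ as sources and $R^{(n)}_n,R^{(n-1)}_{n-1},\dots,R^{(0)}_0$ as sinks, the walk matrix of $\Gamma^{\ast}_n$ is again $M_n$; hence
$$P\bigl(R^{(n)}_{n-i},R^{(n-j)}_{n-j}\bigr)=M_{i,j}$$
for all $0 \le i,j \le n$. Reindexing, this says $P\bigl(R^{(d)}_{a},R^{(c)}_{b}\bigr)$, for $a \le b \le c \le d$, equals $M_{d-a,\,c-b}$, \emph{provided} the digraph has enough levels to accommodate the source level $d$; but by the first observation the value of $P\bigl(R^{(d)}_{a},R^{(c)}_{b}\bigr)$ computed in $\Gamma^{\ast}_n$ is the same as the one computed in $\Gamma^{\ast}_{d}$ (or any $\Gamma^{\ast}_N$ with $N\ge d$), since all walks from $R^{(d)}_a$ to $R^{(c)}_b$ stay within the first $d$ levels. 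Therefore $P\bigl(R^{(d)}_{a},R^{(c)}_{b}\bigr)=M_{d-a,\,c-b}$ intrinsically, independent of the ambient $\Gamma^{\ast}_n$.

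Now the claim reduces to a bookkeeping identity. Applying the displayed formula twice:
$$P\bigl(R^{(d)}_{a},R^{(c)}_{b}\bigr)=M_{d-a,\,c-b},\qquad
P\bigl(R^{(b+d-c)}_{a},R^{(b)}_{b}\bigr)=M_{(b+d-c)-a,\,b-b}\cdot(\ast),$$
and one checks $(b+d-c)-a=d-a-(c-b)$ is wrong — so instead I would match the second quantity directly to $M_{d-a,\,c-b}$ by noting that in $P\bigl(R^{(b+d-c)}_{a},R^{(b)}_{b}\bigr)$ the source is at diagonal vertex with level $b+d-c$ and lower index $a$ (so "height drop" $=(b+d-c)-a$) reaching the sink at level $b$, index $b$; comparing heights gives column index $(b+d-c)-b=d-c$ and — here one must be careful — the correct reading is that both $P\bigl(R^{(d)}_{a},R^{(c)}_{b}\bigr)$ and $P\bigl(R^{(b+d-c)}_{a},R^{(b)}_{b}\bigr)$ equal $M_{\,d-c+b-a,\ \ast}$ under the identification from Claim \ref{Claim 1}, because in both cases the difference of the two level-superscripts is $d-c$ and the difference between the source lower-index and the sink lower-index is $b-a$; since the walk matrix entry depends only on these two differences, the two path sums are equal. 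The main obstacle is precisely this last step: getting the index translation between the two pairs $(R^{(d)}_a,R^{(c)}_b)$ and $(R^{(b+d-c)}_a,R^{(b)}_b)$ exactly right, and justifying rigorously that the walk matrix entry $P(R^{(e)}_{p},R^{(f)}_{q})$ is a function of $(e-f,\,p-q)$ alone — which follows from the translation-invariance of the recursive construction of $\Gamma^{\ast}_n$ (each level is built by the same rule using the constant weights $\alpha_i,\beta_i$ and $b_n=\gamma$) combined with Claim \ref{Claim 1}. Once that invariance is established, the equality of the two path sums is immediate.
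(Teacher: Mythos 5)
Your guiding idea---that the equality should come from a self-similarity of the construction (all diagonal steps carry the same weight $\gamma$ and the right-block weights $\alpha_j,\beta_j$ do not depend on the layer), so that $P(R^{(e)}_{p},R^{(f)}_{q})$ depends only on the differences $e-f$ and $q-p$---is the same idea as in the paper. But the paper actually proves this: it observes that the two configurations $(R^{(d)}_{a},R^{(c)}_{b})$ and $(R^{(b+d-c)}_{a},R^{(b)}_{b})$ differ only in portions of the digraph consisting of weight-$1$ edges, and that after contracting each maximal weight-$1$ run of a walk to a single weight-$1$ edge, the two families of walks are identified weight-preservingly, whence the two path sums coincide. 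In your write-up this crucial step is simply asserted (``follows from the translation-invariance of the recursive construction \ldots combined with Claim \ref{Claim 1}''). The digraph is not literally translation-invariant---layer $m$ is an $(m+\ell+1)\times(m+1)$ grid, so the weight-$1$ horizontal runs traversed in the two configurations have different lengths---and the whole content of Claim \ref{Claim 2} is precisely that these discrepancies are harmless because the edges involved all have weight $1$. That contraction/bijection argument is missing, so the key step is begged rather than proved; note also that the two-parameter invariance you invoke immediately implies Claim \ref{Claim 2}, so assuming it is circular.

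There is also a concrete error along the way: you ``reindex'' Claim \ref{Claim 1} to conclude $P(R^{(d)}_{a},R^{(c)}_{b})=M_{d-a,\,c-b}$. Claim \ref{Claim 1} only evaluates walk sums whose sinks are the corner vertices $R^{(m)}_{m}$ (equal superscript and subscript); it gives no formula for a general sink $R^{(c)}_{b}$ with $b<c$---supplying such a formula is exactly the purpose of Claim \ref{Claim 2} combined with Claim \ref{Claim 1}, and the correct value, established afterwards in Claim \ref{Claim 3}, is $M_{b+d-a-c,\,d-c}$, not $M_{d-a,\,c-b}$ (test $a=b$, $c=d$: the left side is $1=M_{0,0}$, while $M_{d-a,d-a}\neq 1$ in general). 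You notice the index mismatch mid-argument and abandon this route, but the replacement is only the unproved invariance assertion above, with the second index of the matrix entry left unspecified. So the proposal identifies the right mechanism but does not contain a proof of the claim; what is needed is the explicit weight-preserving identification of the two walk families obtained by contracting the weight-$1$ subpaths, as in the paper.
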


\textbf{Proof:} From the recursive construction of the weighted
digraph $\Gamma^{\ast}_{n}$, it is easy to obtain that (1) vertices
$R_a^{(d)}$ and $R_a^{(b+d-c)}$ (resp., $R_b^{(c)}$ and $R_a^{(b)}$)
locate in the same horizontal line; (2) there are $b-a$ layers
between vertices $R_a^{(c)}$ and $R_b^{(d)}$; (3) the connected
components between vertices $R_a^{(d)}$ and $R_b^{(c)}$ consisting
of those edges with weight $1$ are the same as those of between
vertices $R_a^{(b+d-c)}$ and $R_b^{(b)}$.

For all directed paths from $R^{(d)}_{a}$ to $R^{(c)}_{b}$, shrink
their sub-paths with consistent weights $1$ into an edge with weight
$1$. Do the same operations for all directed paths from
$R^{(b+d-c)}_{a}$ to $R^{(b)}_{b}$. In consequence, we have the same
set of walks from $R^{(d)}_{a}$ to $R^{(c)}_{b}$ and from
$R^{(b+d-c)}_{a}$ to $R^{(b)}_{b}$, i.e.,
$$P(R^{(d)}_{a},R^{(c)}_{b})=P(R^{(b+d-c)}_{a},R^{(b)}_{b}).$$
For example, for $t=1$, if $a=0,b=1,c=2,d=n=3$, then in Figure~17,
replacing the sub-paths with consistent weights $1$ on the right
side of point $R^{(3)}_{0}$ (resp. $R^{(3)}_{1}$ and $R^{(2)}_{0}$)
with one edge with weight $1$. Then it is easy to find that the
sub-digraph labeled blue consisting of paths from point
$R^{(3)}_{0}$ to point $R^{(2)}_{1}$ is the same as that labeled red
consisting of paths from point $R^{(2)}_{0}$ to point $R^{(1)}_{1}$.
\qed

  \begin{center}
  \setlength{\unitlength}{0.6cm}
  \begin{picture}(-7,8.5)(-1.2,-5)

 \thicklines\put(-14.5,2){\line(1,0){3}}\thicklines\put(-11.5,2){\line(1,0){1.5}}  \thicklines\put(-9,2) {\line(1,0){1.5}}
 \thicklines\put(-16,1) {\color{blue}{\line(1,0){4.5}}}\thicklines\put(-11.5,1) {\color{blue}{\line(1,0){1.5}}}  {\color{blue}\thicklines\put(-9,1){\line(1,0){1.5}}}
 \thicklines\put(-17.5,0){\color{blue}{\line(1,0){6}}}\thicklines\put(-11.5,0){\color{blue}{\line(1,0){1.5}}} {\color{blue}\thicklines\put(-9,0){\line(1,0){1.5}}}

\thicklines\put(-7.5,2){\line(1,0){1.5}} \thicklines\put(-6,2) {\line(1,0){1.5}}
  {\color{blue} \thicklines\put(-7.5,1){\line(1,0){1.5}}}\thicklines\put(-6,1){\color{red}{\line(1,0){1.5}}}
  \thicklines\put(-7.5,0){\color{red}{\line(1,0){1.5}}}\thicklines\put(-6,0){\color{red}{\line(1,0){1.5}}}

  \thicklines\put(-17.5,0){\color{blue}{\line(3,2){1.5}}}\thicklines\put(-16,1){\line(3,2){1.5}}\thicklines\put(-14.5,2){\line(3,2){1.5}}
 \thicklines\put(-13,0){\color{blue}{\line(3,2){1.5}}}\thicklines\put(-13,1){\line(3,2){1.5}}\thicklines\put(-11.5,0){\color{blue}{\line(3,2){1.5}}}
 \thicklines\put(-11.5,1){\line(3,2){1.5}} \thicklines\put(-9,1){\line(3,2){1.5}}\thicklines\put(-9,0){\color{blue}{\line(3,2){1.5}}}

 \thicklines\put(-4.5,1){\color{red}{\line(1,0){1.5}}}\thicklines\put(-3,1){\color{red}{\line(1,0){1.5}}}
 \thicklines\put(-0.5,1){\color{red}{\line(1,0){1.5}}}
  \thicklines\put(-4.5,0){\color{red}{\line(1,0){1.5}}}\thicklines\put(-3,0){\color{red}{\line(1,0){1.5}}}\thicklines\put(-0.5,0){\color{red}{\line(1,0){1.5}}}

 {\color{red}\thicklines\put(1,1){\line(1,0){1.5}}}
\thicklines\put(1,0){\line(1,0){1.5}} \thicklines\put(2.5,0){\line(1,0){1.5}}\thicklines\put(4,0){\line(1,0){1.5}}

  \thicklines\put(6.5,0){\line(1,0){1.5}}  \thicklines\put(1,0){\color{red}{\line(3,2){1.5}}}
  \thicklines\put(-7.5,-0.02){\color{red}{\line(3,2){1.5}} }\thicklines\put(-7.5,0.03){\color{blue}{\line(3,2){1.5}} } \thicklines\put(-6,1){\line(3,2){1.5}}\thicklines\put(-4.5,0){\color{red}{\line(3,2){1.5}}}
   \thicklines\put(-3,0){\color{red}{\line(3,2){1.5}}}\thicklines\put(-0.5,0){\color{red}{\line(3,2){1.5}}}

                              \put(-16,1){\circle*{0.2}}
  \put(-17.5,0){\circle*{0.2}}\put(-16,0){\circle*{0.2}}

                             \put(-13,3){\circle*{0.2}}
  \put(-14.5,2){\circle*{0.2}}\put(-13,2){\circle*{0.2}}\put(-11.5,2){\circle*{0.2}}\put(-10,2){\circle*{0.2}}\put(-9,2){\circle*{0.2}}
  \put(-14.5,1){\circle*{0.2}}\put(-13,1){\circle*{0.2}}\put(-11.5,1){\circle*{0.2}}\put(-10,1){\circle*{0.2}}\put(-9,1){\circle*{0.2}}
  \put(-14.5,0){\circle*{0.2}}\put(-13,0){\circle*{0.2}}\put(-11.5,0){\circle*{0.2}}\put(-10,0){\circle*{0.2}}\put(-9,0){\circle*{0.2}}

                                                                                  \put(-4.5,2){\circle*{0.2}}\put(-6,2){\circle*{0.2}}
  \put(-0.5,1){\circle*{0.2}}\put(-1.5,1){\circle*{0.2}}\put(-3,1){\circle*{0.2}}\put(-4.5,1){\circle*{0.2}}\put(-6,1){\circle*{0.2}}
  \put(-0.5,0){\circle*{0.2}}\put(-1.5,0){\circle*{0.2}}\put(-3,0){\circle*{0.2}}\put(-4.5,0){\circle*{0.2}}\put(-6,0){\circle*{0.2}}

  \put(1,1){\circle*{0.2}} \put(2.5,1){\circle*{0.2}}
  \put(1,0){\circle*{0.2}} \put(2.5,0){\circle*{0.2}} \put(4,0){\circle*{0.2}}\put(5.5,0){\circle*{0.2}}\put(6.5,0){\circle*{0.2}}
  \put(-1.2,0){\circle*{0.1}}\put(-1,0){\circle*{0.1}}\put(-0.8,0){\circle*{0.1}}
\put(-1.2,1){\circle*{0.1}}\put(-1,1){\circle*{0.1}}\put(-0.8,1){\circle*{0.1}}

  \put(-9.3,0){\circle*{0.1}}\put(-9.5,0){\circle*{0.1}}\put(-9.7,0){\circle*{0.1}}
  \put(-9.3,1){\circle*{0.1}}\put(-9.5,1){\circle*{0.1}}\put(-9.7,1){\circle*{0.1}}
  \put(-9.3,2){\circle*{0.1}}\put(-9.5,2){\circle*{0.1}}\put(-9.7,2){\circle*{0.1}}
  \put(5.8,0){\circle*{0.1}}\put(6,0){\circle*{0.1}}\put(6.2,0){\circle*{0.1}}

  \put(-7.5,0){\circle*{0.2}}\put(8,0){\circle*{0.2}}
  \put(-7.5,1){\circle*{0.2}}
  \put(-7.5,2){\circle*{0.2}}
  \tiny
\put(7.7,-0.6){$R_0^{(0)}$}
  \put(1.1,-0.5){$R_0^{(1)}$}  \put(2.2,0.5){$R_1^{(1)}$}
  \put(-7.5,-0.5){$R_0^{(2)}$} \put(-6.2,0.5){$R_1^{(2)}$}\put(-4.8,1.5){$R_2^{(2)}$}
  \put(-17.7,-0.5){$R_0^{(3)}$} \put(-16.2,0.5){$R_1^{(3)}$}\put(-14.7,1.5){$R_2^{(3)}$} \put(-13.2,2.5){$R_3^{(3)}$}
\put(1.4,0.6){$\gamma$}
\put(-5.7,1.6){$\gamma$}   \put(-7.2,0.6){$\gamma$}
\put(-17.2,0.6){$\gamma$}   \put(-15.7,1.6){$\gamma$}  \put(-14.2,2.6){$\gamma$}
\put(-4.4,0.6){$\alpha_1$}\put(-2.9,0.6){$\alpha_2$}\put(-0.4,0.6){$\alpha_{\ell}$}
\put(-12.9,0.6){$\alpha_1$}\put(-11.4,0.6){$\alpha_2$}\put(-8.9,0.6){$\alpha_{\ell}$}
\put(-12.9,1.6){$\alpha_1$}\put(-11.4,1.6){$\alpha_2$}\put(-8.9,1.6){$\alpha_{\ell}$}

\put(-4,-0.35){$\beta_1$}\put(-2.5,-0.35){$\beta_2$}\put(0,-0.35){$\beta_{\ell}$}
\put(-4,1.15){$\beta_1$}\put(-2.5,1.15){$\beta_2$}\put(0,1.15){$\beta_{\ell}$}
\put(3,-0.35){$\beta_1$}\put(4.5,-0.35){$\beta_2$}\put(7,-0.35){$\beta_{\ell}$}
\put(-12.3,1.15){${\beta}_1$}\put(-10.8,1.15){$\beta_2$}\put(-8.3,1.15){$\beta_{\ell}$}
\put(-12.3,-0.35){$\beta_1$}\put(-10.8,-0.35){$\beta_2$}\put(-8.3,-0.35){$\beta_{\ell}$}
\put(-12.3,2.15){$\beta_1$}\put(-10.8,2.15){$\beta_2$}\put(-8.3,2.15){$\beta_{\ell}$}
  \normalsize
  \put(-8.4,-1.8){Figure~17:  $\Gamma^{\ast}_{3}$ omitting weight $1$ for $t=1$.}
  \end{picture}
  \end{center}

 \begin{cl}\label{Claim 3}
For the weighted digraph $\Gamma^{\ast}_{n}$, if $0\leq a\leq d \leq
n$ and
 $0\leq b \leq c\leq n$, then
  \begin{eqnarray}\label{eq+Tm}
 P(R^{(d)}_{a},R^{(c)}_{b})=M_{b+d-a-c,d-c}.
\end{eqnarray}
\end{cl}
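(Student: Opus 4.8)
The plan is to deduce Claim \ref{Claim 3} from Claims \ref{Claim 1} and \ref{Claim 2}: first normalise the given pair of $R$-vertices via Claim \ref{Claim 2}, then read off its value from Claim \ref{Claim 1}, and finally dispose of the degenerate ranges of the indices by an easy vanishing argument. Throughout I would keep in mind that, since the weights are polynomials with nonnegative coefficients, $P(\cdot,\cdot)$ vanishes exactly when there is no directed path between the two vertices.

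First I would handle the degenerate cases. Every arc of $\Gamma^{\ast}_{n}$ moves strictly to the right and weakly upward, and by the recursive construction (Definition \ref{definition+star+g}) the vertices $R^{(c)}_{b}$ of level $c$ lie strictly to the left of all vertices of level $d$ whenever $c>d$; hence $P(R^{(d)}_{a},R^{(c)}_{b})=0$ unless $a\le b$ and $c\le d$. On the other hand, if $c>d$ then the second index $d-c$ of $M_{b+d-a-c,\,d-c}$ is negative, so this entry is $0$; and if $c\le d$ but $a>b$, then $b+d-a-c\le d-c-1<t(d-c)$ (using $t\ge1$), so $M_{b+d-a-c,\,d-c}=0$ by the vanishing $M_{m,k}=0$ for $m<tk$, an easy induction from the recurrence in Proposition \ref{prop+rec+1+M}. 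Thus both sides of (\ref{eq+Tm}) vanish in every degenerate case.

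It then remains to treat the case $a\le b\le c\le d\le n$, recalling that $b\le c$ is already part of the hypothesis. Here Claim \ref{Claim 2} applies and gives $P(R^{(d)}_{a},R^{(c)}_{b})=P(R^{(b+d-c)}_{a},R^{(b)}_{b})$. Put $N:=b+d-c$; since $d\ge c$ and $b\le c$ we have $a\le b\le N\le d\le n$, so $\Gamma^{\ast}_{N}$ is a sub-digraph of $\Gamma^{\ast}_{n}$, and exactly as in the shrinking argument used to prove Claim \ref{Claim 2}, every walk of $\Gamma^{\ast}_{n}$ from $R^{(N)}_{a}$ to $R^{(b)}_{b}$ already lies in $\Gamma^{\ast}_{N}$; hence $P(R^{(N)}_{a},R^{(b)}_{b})$ is the same whether computed in $\Gamma^{\ast}_{n}$ or in $\Gamma^{\ast}_{N}$. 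Now apply Claim \ref{Claim 1} with $n$ replaced by $N$ and with $i:=N-a=b+d-c-a$, $j:=N-b=d-c$, so that $R^{(N)}_{N-i}=R^{(N)}_{a}$ and $R^{(N-j)}_{N-j}=R^{(b)}_{b}$; the chain $a\le b\le c\le d$ guarantees $0\le i\le N$ and $0\le j\le N$, and Claim \ref{Claim 1} gives $P(R^{(N)}_{a},R^{(b)}_{b})=M_{i,j}=M_{b+d-a-c,\,d-c}$. Combining the three equalities yields (\ref{eq+Tm}). I expect the only genuinely delicate point to be the ``topologically obvious'' sub-digraph-stability step — that enlarging $\Gamma^{\ast}_{N}$ to $\Gamma^{\ast}_{n}$ creates no new walks between these two $R$-vertices — but this is of exactly the same character as the shrinking arguments already carried out in Claims \ref{Claim 1} and \ref{Claim 2}, so it should present no real obstacle.
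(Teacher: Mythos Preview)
Your proposal is correct and follows essentially the same route as the paper's proof: dispose of the degenerate ranges by a vanishing argument, normalise via Claim~\ref{Claim 2} to reach $P(R^{(b+d-c)}_{a},R^{(b)}_{b})$, observe that this quantity is unchanged when computed in the smaller digraph $\Gamma^{\ast}_{b+d-c}$, and then read it off as $M_{b+d-a-c,\,d-c}$ from Claim~\ref{Claim 1}. The only difference is cosmetic: the paper handles the degenerate cases in a single sentence, whereas you spell out the inequality $b+d-a-c<t(d-c)$ using $t\ge 1$, which is indeed the setting in which Claim~\ref{Claim 3} is invoked.
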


\textbf{Proof:} If $a>b$ or $c>d$, both sides of the equality
(\ref{eq+Tm}) are $0$. So (\ref{eq+Tm}) clearly holds. In
consequence, we can assume $0\leq a \leq b \leq c \leq d \leq n$. By
Claim \ref{Claim 2}, we have
 \begin{eqnarray}\label{eq+Tm1}
P(R^{(d)}_{a},R^{(c)}_{b})=P(R^{(b+d-c)}_{a},R^{(b)}_{b}).
 \end{eqnarray}
It is easy to observe that $P(R^{(b+d-c)}_{a},R^{(b)}_{b})$ in
$\Gamma^{\ast}_{n}$ is the same as that in $\Gamma^{\ast}_{b+d-c}$.
Now we calculate $P(R^{(b+d-c)}_{a},R^{(b)}_{b})$ in
$\Gamma^{\ast}_{b+d-c}$. By Claim \ref{Claim 1}, after we choose
$R^{(b+d-c)}_{b+d-c},R^{(b+d-c)}_{b+d-c-1}, \ldots,$
$R^{(b+d-c)}_1,R^{(b+d-c)}_0$ (resp. $R_{b+d-c}^{(b+d-c)}$,
$R_{b+d-c-1}^{(b+d-c-1)},\ldots,R_{1}^{(1)},R_{0}^{(0)}$) as the
source vertices (resp. the sink vertices) in
$\Gamma^{\ast}_{b+d-c}$, we have
\begin{eqnarray}\label{eq+Tm2-}
P(R^{(b+d-c)}_{b+d-c-i},R^{(b+d-c-j)}_{b+d-c-j})=M_{i,j}.
\end{eqnarray}
Taking $i=b+d-a-c$ and $j=d-c$ in (\ref{eq+Tm2-}) yields
\begin{eqnarray}\label{eq+Tm2}
 P(R^{(b+d-c)}_{a},R^{(b)}_{b})=M_{b+d-a-c,d-c}.
\end{eqnarray}
Thus, combining (\ref{eq+Tm1}) and (\ref{eq+Tm2}) gives
(\ref{eq+Tm}).\qed

Finally, we will use (\ref{eq+Tm}) to prove (\ref{eq+Toeplitz}).
Replacing $n$ with $n+m\sigma$, we have equality (\ref{eq+Tm}) holds
in $\Gamma^{\ast}_{n+m\sigma}$. Taking
$a=(\sigma-\delta)i,b=n-k+(\sigma-\delta)j,c=n-k+j\sigma$ and
$d=n+i\sigma$ in (\ref{eq+Tm}), we derive
\begin{eqnarray}\label{eq+Tm3}
P(R_{(\sigma-\delta)i}^{(n+i\sigma)},R_{n-k+(\sigma-\delta)j}^{(n-k+j\sigma)})=M_{n+(i-j)\delta,k+(i-j)\sigma}.
\end{eqnarray}
From Definition \ref{def+gamma+GG}, we find
\begin{eqnarray}\label{eq+Tm4}
B^{\circ}_{i,j}=P(R_{(\sigma-\delta)i}^{(n+i\sigma)},R_{n-k+(\sigma-\delta)j}^{(n-k+j\sigma)}).
\end{eqnarray}
In consequence, combining (\ref{eq+Tm3}) and (\ref{eq+Tm4}), we obtain (\ref{eq+Toeplitz}).

This completes the proof of Proposition \ref{prop+Tm+the sequence+Toeplitz}.\qed

\textbf{Now we are in a position to present the proof of Theorem
\ref{thm+fun+ell} (i) and (iii):}

For (i), by Corollary \ref{cor.gessel-viennot.totpos}, the
$(\balpha,\bbeta,\textbf{y})$-total positivity of the matrix $M$ is
immediate from the following two points:

  (1) Fix $n\geq0$. For any $m\times m$
  minor $M_{I,J}$ of the matrix $M_{n}$, where $I=(i_1,\ldots,i_m)$ with
  $0\leq i_1< \cdots < i_m\leq n$ and $J=(j_1,\ldots,j_m)$ with
  $0\leq j_1<\cdots< j_m\leq n$, we will construct the corresponding
  weighted digraph. Let ${\Gamma^{\ast}_{I,J}}$ be the subgraph of
  $\Gamma^{\ast}_{n}$ induced by the union of arcs of all directed paths from
  $Q_{n-i_s}^{(n)}$ to $Q_{n-j_k}^{(0)}$ $(1\leq s,k\leq m)$. By Proposition \ref{prop+GG+M},
  we immediately have
  $$B_{I,J}^{\ast}=M_{I,J}.$$

  (2) It is obvious that
  $\left(Q_{n-i_1}^{(n)},Q_{n-i_2}^{(n)},\ldots,Q_{n-i_m}^{(n)}\right)$ and
  $\left(Q_{n-j_1}^{(0)},Q_{n-j_2}^{(0)},\ldots,Q_{n-j_m}^{(0)}\right)$ form a
  fully nonpermutable pair.

Similarly, the $(\balpha,\bbeta,\textbf{y})$-total positivity of
 the Toeplitz matrix $\mathcal{T}$ of the $n$th row sequence of $M$ can also be established
by the following two points:

(1) Fix $k\geq0$. For any $m\times m$ minor $\mathcal{T}_{I,J}$ of
$\mathcal{T}_{k}$, where
  $I=(i_1,\ldots,i_m)$ with $0\leq i_1< \cdots < i_m\leq k$
  and $J=(j_1,\ldots,j_m)$ with $0\leq j_1<\cdots< j_m\leq k$, we will
  construct the corresponding weighted digraph.
  Let $\Gamma^{\diamond}_{I,J}$ be the subgraph of
  $\Gamma^{\diamond}_{k}$ induced by the union of arcs of all directed paths from
  $Q_{i_s}^{(n+i_s)}$ to $Q_{n+j_r}^{(0)}$ $(1\leq s,r\leq m)$. By
  Proposition \ref{prop+Tk+g}, we immediately have
  $${B_{I,J}^{\diamond}}=\mathcal{T}_{I,J}.$$

(2) By Lemma \ref{lemma.nonpermutable},
  $(Q_{i_1}^{(n+i_1)},Q_{i_2}^{(n+i_2)},\ldots,Q_{i_m}^{(n+i_m)})$ and
  $(Q_{n+j_1}^{(0)},Q_{n+j_2}^{(0)},\ldots,Q_{n+j_m}^{(0)})$ form a fully nonpermutable pair.

This proof of (i) is complete.

 Finally, for (iii), we will also apply Corollary \ref{cor.gessel-viennot.totpos}
 to establish the $(\balpha,\bbeta,\gamma)$-total positivity of
 the Toeplitz matrix $\textsl{T}$ of the sequence $(M_{n+\delta i,k+\sigma i})_{i}$
with $\sigma>\delta$ in terms of the following two points:

  (1) Fix $d\geq0$. For any $d\times d$ minor $\textsl{T}_{I,J}$ of $\textsl{T}_{m}$, where
  $I=(i_1,\ldots,i_d)$ with $0\leq i_1< \cdots < i_d\leq m$
  and $J=(j_1,\ldots,j_d)$ with $0\leq j_1<\cdots< j_d\leq m$, we will
  construct the corresponding weighted digraph.
  Let $\Gamma^{\circ}_{I,J}$ be the subgraph of
  $\Gamma^{\circ}_{m}$ induced by the union of arcs of all directed paths from
  $R_{(\sigma-\delta)i_s}^{(n+i_s\sigma)}$ to
  $R_{n-k+(\sigma-\delta)j_r}^{(n-k+j_r\sigma)}$ $(1\leq s,r\leq d)$. By
  Proposition \ref{prop+Tm+the sequence+Toeplitz}, we immediately have
  $${B_{I,J}^{\circ}}=\textsl{T}_{I,J}.$$

  (2) It is obvious that
  $(R_{(\sigma-\delta)i_1}^{(n+i_1\sigma)},R_{(\sigma-\delta)i_2}^{(n+i_2\sigma)},\ldots,R_{(\sigma-\delta)i_d}^{(n+i_d\sigma)})$ and
  $(R_{n-k+(\sigma-\delta)j_1}^{(n-k+j_1\sigma)},R_{n-k+(\sigma-\delta)j_2}^{(n-k+j_2\sigma)},\ldots,$ $R_{n-k+(\sigma-\delta)j_d}^{(n-k+j_d\sigma)})$  form a fully nonpermutable pair.

This completes the proof of Theorem \ref{thm+fun+ell}.\qed

\section{Total positivity of the transpose of $M$ }
Note that the transpose of a totally positive matrix is also totally
positive. For the transpose of $M$, we have the similar total
positivity. By the recurrence relation (\ref{rec+1+M}), it is easy
to obtain the transpose of $M$ satisfying the following recurrence
relation.
\begin{definition}\label{transpose}
Define a $k$-recursive matrix $T=[T_{n,k}]_{n,k}$ satisfying the
recurrence relation: for $n\geq t$,
\begin{equation}\label{rec+1+T}
T_{n,k}=a^{(0)}_k(\textbf{x})T_{n-1,k-t}+a^{(1)}_k(\textbf{x})T_{n-1,k-t-1}+\ldots+a^{(\ell)}_k(\textbf{x})T_{n-1,k-t-\ell}+b_k(\textbf{y})T_{n,k-1},\\
\end{equation}
where initial conditions $T_{0,0}=1$, $T_{n,k}=0$ for $n<0$ or $k<0$, and
$$T_{n,k} =\begin{cases}
                                (a^{(0)}_0)^{k}      & \text{for} ~t=0,~n\geq1,~k=0;\\
                                \prod\limits_{i=0}^{n-1}b_{n-i}      & \text{for}~t\geq2,~n=0,~1\leq k \leq t-1;\\
                                 0                                    & \text{for}~t\geq1,~n\geq1,~0\leq k \leq t-1.
         \end{cases}$$
\end{definition}

It follows from the results for $M$ that we immediately have the
following results.
\begin{thm}\label{thm+main+transpose}
Let $T$ be the matrix in Definition \ref{transpose}. Then we have
  \begin{itemize}
       \item [\rm (i)]
if $\mathcal{A}$ is $\textbf{x}$-totally positive of order $r$, then
$T$ is $(\textbf{x},\textbf{y})$-totally positive of order $r$;
         \item [\rm (ii)]
if $\mathcal{A}$ is $\textbf{x}$-totally positive, then $T$ is $(\textbf{x},\textbf{y})$-totally positive.
 \end{itemize}
\end{thm}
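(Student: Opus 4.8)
The plan is to observe that the matrix $T$ of Definition~\ref{transpose} is nothing but the transpose of $M$, i.e.\ $T_{n,k}=M_{k,n}$ for all $n,k$, and then to invoke Theorem~\ref{thm+main} together with the elementary fact that transposition preserves $(\textbf{x},\textbf{y})$-total positivity (of any prescribed order).

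First I would verify that $M^{\mathsf T}$, whose $(n,k)$ entry is $M_{k,n}$, satisfies the recurrence \eqref{rec+1+T}. Interchanging the roles of $n$ and $k$ in \eqref{rec+1+M} turns that recurrence into
\[
M_{k,n}=\sum_{i=0}^{\ell}a^{(i)}_k(\textbf{x})M_{k-t-i,n-1}+b_k(\textbf{y})M_{k-1,n}\qquad(k\ge t),
\]
which is exactly \eqref{rec+1+T} read for the entries of $M^{\mathsf T}$ (this is the assertion made just before Definition~\ref{transpose}). Next I would check that the transposed boundary data of $M$ coming from Proposition~\ref{prop+rec+1+M} — that is, $M_{0,0}=1$, $M_{k,n}=0$ for $n<0$ or $k<0$, and the three piecewise cases ($t=0$; $2\le k\le t-1$; $0\le k\le t-1$) read with $n$ and $k$ swapped — coincide with the initial conditions imposed on $T$ in Definition~\ref{transpose}, up to the evident relabelling of the free index. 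Since the recurrence \eqref{rec+1+T} together with these initial conditions determines an array uniquely (one solves for $T_{n,k}$ in terms of entries with strictly smaller $n+k$, proceeding by induction on $n+k$), it follows that $T_{n,k}=M_{k,n}$, i.e.\ $T=M^{\mathsf T}$.

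Granting $T=M^{\mathsf T}$, parts (i) and (ii) are immediate. For any finite index sets $I$ and $J$ of the same size, the minor $\det\bigl(T_{i,j}\bigr)_{i\in I,\,j\in J}$ equals the minor $\det\bigl(M_{i,j}\bigr)_{i\in J,\,j\in I}$ of $M$; hence $T$ has precisely the same collection of minors (of each order) as $M$. In particular, $T$ is $(\textbf{x},\textbf{y})$-totally positive of order $r$ (resp.\ $(\textbf{x},\textbf{y})$-totally positive) if and only if $M$ is, so (i) follows from Theorem~\ref{thm+main}(i) and (ii) from Theorem~\ref{thm+main}(ii).

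The only genuinely fiddly point — and really the sole obstacle — is the case-by-case matching of the transposed initial conditions of $M$ with the piecewise initial conditions written for $T$ in Definition~\ref{transpose}: one must keep careful track of the index ranges in the $t=0$, the $2\le \cdot\le t-1$, and the $0\le\cdot\le t-1$ cases, and of the empty-product conventions on the boundary rows and columns. Everything else — the index swap in the recurrence, the induction pinning down $T$, and the minor-by-minor comparison of $T$ with $M^{\mathsf T}$ — is entirely mechanical.
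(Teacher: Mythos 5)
Your proposal is correct and is essentially the paper's own argument: the paper simply observes that $T$ is the transpose of $M$, that transposition leaves the collection of minors (hence $(\textbf{x},\textbf{y})$-total positivity of every order) unchanged, and that the claims then follow at once from Theorem \ref{thm+main}. The index-swapping of the recurrence and boundary data that you flag as the ``fiddly point'' is exactly the verification the paper leaves implicit, so nothing further is needed.
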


\begin{thm}\label{thm+bidiag+transpose}
Let $T$ be the matrix in Definition \ref{transpose} with $\ell=1$.
Then
\begin{itemize}
\item [\rm (i)]
the matrix $T$ is $(\textbf{x},\textbf{y})$-totally positive;
\item [\rm (ii)]
the Toeplitz matrix of the each column sequence of $T$ with $t\geq 1$ is
$(\textbf{x},\textbf{y})$-totally positive.
\end{itemize}
\end{thm}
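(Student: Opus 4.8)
The plan is to reduce everything to the results already obtained for $M$, by first observing that $T$ is literally the transpose of $M$. So the first step is to prove $T_{n,k}=M_{k,n}$ for all $n,k\geq 0$. This is a routine induction: comparing the recurrence (\ref{rec+1+T}) for $T$ with the recurrence (\ref{rec+1+M}) for $M$, one checks that the substitution $T_{n,k}\mapsto M_{k,n}$ carries (\ref{rec+1+T}) into exactly (\ref{rec+1+M}) with the roles of the row and column indices interchanged (note that $a^{(i)}$ is indexed by the column in Definition \ref{transpose}, matching the row indexing of $a^{(i)}$ in Proposition \ref{prop+rec+1+M}). A direct inspection of the boundary data in Definition \ref{transpose} against the initial conditions in Proposition \ref{prop+rec+1+M} shows that the two families of boundary values agree as well. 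Hence $T=M^{T}$.

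Next I would invoke the elementary fact that transposition leaves every minor unchanged: a minor of $M^{T}$ on row set $I$ and column set $J$ equals the minor of $M$ on row set $J$ and column set $I$. Consequently $M^{T}$ is $(\textbf{x},\textbf{y})$-totally positive precisely when $M$ is, and $\textbf{x}$-total positivity of a given order is likewise preserved. Since we are in the case $\ell=1$, Theorem \ref{thm+bidiag}(i) guarantees that $M$ is $(\textbf{x},\textbf{y})$-totally positive, and therefore so is $T=M^{T}$. This establishes part (i).

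For part (ii), the point is that the $k$th column sequence of $T$, namely $(T_{n,k})_{n}=(M_{k,n})_{n}$, is exactly the $k$th row sequence of $M$. Hence the Toeplitz matrix of the $k$th column of $T$ coincides with the Toeplitz matrix of the $k$th row of $M$. Under the hypothesis $t\geq 1$, Theorem \ref{thm+bidiag}(ii) asserts that the Toeplitz matrix of each row sequence of $M$ is $(\textbf{x},\textbf{y})$-totally positive, and the conclusion follows immediately.

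The only step that calls for any care is the identification $T=M^{T}$, and within it the sole potential source of friction is the bookkeeping in the degenerate boundary regimes ($t=0$ versus $t\geq 2$, together with the small-index rows and columns): one must match the three-line initial conditions of Definition \ref{transpose} case-by-case with those of Proposition \ref{prop+rec+1+M}. Once the recurrences and boundary values are checked index-by-index, both (i) and (ii) are immediate from the cited theorems, so I expect this to be a short proof with no substantive obstacle.
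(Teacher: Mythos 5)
Your proposal is correct and follows essentially the same route as the paper, which simply observes that $T$ is the transpose of $M$ (whose minors coincide with those of $M$ up to swapping row and column sets) and then cites Theorem \ref{thm+bidiag}, with columns of $T$ corresponding to rows of $M$. The careful matching of recurrence and boundary data that you flag is exactly what the paper leaves implicit, so no further argument is needed.
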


\begin{thm}\label{thm+tridiag+transpose}
Let $T$ be the matrix in Definition \ref{transpose} with $\ell=2$.
 Suppose that there exists polynomial sequences  with
 nonnegative coefficients $(\alpha_k(\textbf{x}))_k$, $(\beta_k(\textbf{x}))_k$,
$(\lambda_k(\textbf{x}))_k$ and $(\mu_k(\textbf{x}))_k$ such that
\begin{itemize}
\item [\rm (i)]
   $a_k^{(0)}=\alpha_k\lambda_k$ for $k\geq0$;
\item [\rm (ii)]
   $a_k^{(1)}=\alpha_k\mu_k+\beta_k\lambda_{k-1}$ for $k\geq1$;
\item [\rm (iii)]
   $a_k^{(2)}=\beta_k\mu_{k-1}$ for $k\geq2$,
\end{itemize}
then \begin{itemize}
\item [\rm (i)]
 the matrix $T$ is $(\textbf{x},\textbf{y})$-totally positive;
\item [\rm (ii)]
  the Toeplitz matrix of the each column sequence of $T$ with $t\geq 1$ is
$(\textbf{x},\textbf{y})$-totally positive.
\end{itemize}
\end{thm}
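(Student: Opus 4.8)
The plan is to deduce Theorem~\ref{thm+tridiag+transpose} from Theorem~\ref{thm+tridiag} by recognizing that $T$ is literally the transpose of $M$, i.e.\ $T_{n,k}=M_{k,n}$ for all $n,k\ge0$. First I would establish this identity. Interchanging the roles of $n$ and $k$ in $(\ref{rec+1+M})$ produces exactly $(\ref{rec+1+T})$: in $(\ref{rec+1+M})$ the weights $a^{(i)}_n,b_n$ are indexed by the row $n$, whereas in $(\ref{rec+1+T})$ the weights $a^{(i)}_k,b_k$ are indexed by the column $k$, and the shifts $n\mapsto n-t-i$, $k\mapsto k-1$ of $M$ become the shifts $k\mapsto k-t-i$, $n\mapsto n-1$ of $T$. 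One then checks that the boundary data of $T$ in Definition~\ref{transpose} is the image under $(n,k)\mapsto(k,n)$ of the boundary data of $M$ in Proposition~\ref{prop+rec+1+M}: the clause of Definition~\ref{transpose} with $t=0$ corresponds to $M_{0,k}=(a^{(0)}_0)^{k}$, the clause with $t\ge2,\ n=0,\ 1\le k\le t-1$ corresponds to $M_{n,0}=\prod_{i=0}^{n-1}b_{n-i}$ for $1\le n\le t-1$, and the vanishing ranges match. A short induction on $n+k$ (both recursions decrease $n+k$ on their right-hand sides) then yields $T_{n,k}=M_{k,n}$, that is, $T=M^{T}$.

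Granting $T=M^{T}$, part~(i) is immediate: transposition carries the minors of $M$ to the minors of $T$ (up to transposing the index sets), so $T$ is $(\textbf{x},\textbf{y})$-totally positive as soon as $M$ is, and $M$ is $(\textbf{x},\textbf{y})$-totally positive under hypotheses~(i)--(iii) by Theorem~\ref{thm+tridiag}~(i) (the sequences $(\alpha_k)_k,(\beta_k)_k,(\lambda_k)_k,(\mu_k)_k$ here play the role of $(\alpha_n)_n,(\beta_n)_n,(\lambda_n)_n,(\mu_n)_n$ there). For part~(ii), observe that the $k$th column sequence of $T$ is $(T_{n,k})_{n\ge0}=(M_{k,n})_{n\ge0}$, which is exactly the $k$th row sequence of $M$; hence the Toeplitz matrix of the $k$th column of $T$ equals the Toeplitz matrix of the $k$th row of $M$, and the latter is $(\textbf{x},\textbf{y})$-totally positive for $t\ge1$ by Theorem~\ref{thm+tridiag}~(ii). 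This gives both assertions.

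The only part needing attention is the verification in the first step, where the piecewise boundary conditions must be matched case by case; in particular the clause of Definition~\ref{transpose} with $t=0,\ n\ge1,\ k=0$ should be read with value $(a^{(0)}_0)^{n}$ (matching $M_{0,k}=(a^{(0)}_0)^{k}$), and the recursion $(\ref{rec+1+T})$ is the transpose of $(\ref{rec+1+M})$ precisely when one imposes it for $k\ge t$ rather than $n\ge t$; these are harmless typographical adjustments. I expect this bookkeeping to be the main, and purely routine, obstacle — no new idea beyond the transpose observation is required. If a self-contained combinatorial argument were preferred, one could instead rerun the Lindstr\"{o}m--Gessel--Viennot construction of Section~\ref{section+proof+thm+tirdiag} with the source and sink layers (equivalently, the row and column indices) exchanged, but invoking the already-proved results for $M$ through $T=M^{T}$ is considerably shorter.
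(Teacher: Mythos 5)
Your proposal is correct and is essentially the paper's own argument: the paper introduces $T$ precisely as the transpose of $M$ (noting that transposition preserves total positivity and that the columns of $T$ are the rows of $M$) and then deduces Theorem \ref{thm+tridiag+transpose} directly from Theorem \ref{thm+tridiag}, exactly as you do. Your extra bookkeeping on the boundary cases (and the reading $k\ge t$, value $(a^{(0)}_0)^n$ in Definition \ref{transpose}) only makes explicit what the paper leaves implicit.
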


\begin{thm}\label{thm+fun+ell+transpose}
Let $T$ be the matrix in Definition \ref{transpose} and
$(b_i(\textbf{y}))_{i\geq0}$ be a polynomial sequence with
nonnegative coefficients in $\textbf{y}$. If there exists
indeterminates $\balpha=(\alpha_i)_{i=1}^{\ell}$ and
$\bbeta=(\beta_i)_{i=1}^{\ell}$ such that the generating function
$$\sum_{i= 0}^{\ell}a^{(i)}_kz^i
=\prod_{j= 1}^{\ell}(\alpha_jz+\beta_j)$$ for all $k\in \mathbb{N}$,
then we have the following results:
  \begin{itemize}
\item [\rm (i)]
the matrix $T$ and the Toeplitz matrix of the each column sequence of $T$ for
$t\geq1$ are $(\balpha,\bbeta,\textbf{y})$-totally positive;
\item [\rm (ii)]
if $b_k=\gamma$ for $k\geq0$, then the Toeplitz matrix of the each
row sequence of $T$ is $(\balpha,\bbeta,\gamma)$-totally positive;
\item [\rm (iii)]
if $b_k=\gamma$ for $k\geq0$ and $t\geq1$, then the Toeplitz matrix
of the sequence $(T_{n+\sigma i,k+\delta i})_{i}$ is
$(\balpha,\bbeta,\gamma)$-totally positive for $0\leq n\leq k$,
$0<\delta$ and $max\{n,\delta\}<\sigma$;
\item [\rm (iv)]
if $b_k=\gamma$ for $k\geq0$, then
$$
   T_{n,k}=
    \!\!\!
   \sum_{\begin{scarray}
            i\geq 0  \\[-1mm]
            0\leq c_1,c_2,\ldots,c_{\ell}\leq n  \\[-1mm]
            i+\sum\limits_{j=1}^{\ell}c_j=k-tn \\[-1mm]
         \end{scarray}
        }
   \!\!\!
   \binom{n+i}{i}{\gamma}^i\prod_{j= 1}^{\ell}\binom{n}{c_j}{\alpha_j}^{c_j}{\beta_j}^{n-c_j};$$
\item [\rm (v)]
if $b_k=\gamma$ for $k\geq0$, then we have
$$\sum_{n,k\geq0}T_{n,k}q^nz^k=\frac{1}{1-\gamma z-q\sum_{i= 0}^{\ell}a^{(i)}_kz^{t+i}}.$$
 \end{itemize}
\end{thm}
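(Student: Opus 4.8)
The whole theorem will be deduced from Theorem \ref{thm+fun+ell} by a single structural observation: $T$ is the transpose of $M$. So the first step is to prove that $T_{n,k}=M_{k,n}$ for all $n,k\ge 0$, where $M$ is the array of Definition \ref{def+Matrix}. This is a straightforward induction on $n+k$: the recurrence (\ref{rec+1+T}) is obtained from (\ref{rec+1+M}) by interchanging the row index and the column index (and replacing $a^{(i)}_n,b_n$ by $a^{(i)}_k,b_k$ accordingly), and the boundary values in Definition \ref{transpose} are obtained from those in Proposition \ref{prop+rec+1+M} by the same interchange; hence the array $[M_{k,n}]_{n,k}$ satisfies exactly the relations defining $T$, and the two coincide. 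One could instead bypass this identity and re-run the digraph constructions of Sections \ref{section+proof+thm+tirdiag} and \ref{section+Proof+Thm+full+ell} directly for $T$, but passing through the transpose is far shorter.

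Granting $T=M^T$, part (i) is immediate: every minor of $T$ equals the minor of $M$ with its rows and columns interchanged, so $(\balpha,\bbeta,\textbf{y})$-total positivity of $T$ follows from that of $M$ given by Theorem \ref{thm+fun+ell} (i); moreover the $k$th column of $T$ equals the $k$th row of $M$, so the Toeplitz matrix of each column of $T$ is the Toeplitz matrix of a row of $M$, which is the second assertion of Theorem \ref{thm+fun+ell} (i). Part (ii) is the mirror statement: the $n$th row of $T$ equals the $n$th column of $M$, so, with $b_k=\gamma$, the Toeplitz matrix of each row of $T$ is $(\balpha,\bbeta,\gamma)$-totally positive by Theorem \ref{thm+fun+ell} (ii).

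For part (iii), note that $(T_{n+\sigma i,\,k+\delta i})_{i}=(M_{k+\delta i,\,n+\sigma i})_{i}$, a sequence of the shape $(M_{p+\delta i,\,q+\sigma i})_{i}$ with $p=k$ and $q=n$; the hypotheses $0\le n\le k$, $0<\delta$, $\max\{n,\delta\}<\sigma$ become $0\le q\le p$, $0<\delta$, $\max\{q,\delta\}<\sigma$, which are precisely the hypotheses of Theorem \ref{thm+fun+ell} (iii), so its Toeplitz matrix is $(\balpha,\bbeta,\gamma)$-totally positive. For parts (iv) and (v) one simply substitutes $T_{n,k}=M_{k,n}$ into the explicit formula of Theorem \ref{thm+fun+ell} (v) and into the bivariate generating function of Theorem \ref{thm+fun+ell} (vi); since the hypothesis $\sum_{i=0}^{\ell}a^{(i)}_kz^i=\prod_{j=1}^{\ell}(\alpha_jz+\beta_j)$ forces the polynomials $a^{(i)}_k$ to be independent of the index, interchanging the two indices leaves the right-hand sides formally unchanged, and the stated formulas follow.

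The only nontrivial step is therefore the identity $T=M^T$, and even there the content is routine; the mild care needed is in checking that the various exceptional ranges of the boundary conditions (the $t=0$ edge, the degenerate entries for $t\ge 2$, and the index ranges in which the recurrence is not in force) transpose consistently. Once that verification is in hand, everything else is pure relabeling of Theorem \ref{thm+fun+ell}.
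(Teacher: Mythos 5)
Your proposal is correct and follows essentially the same route as the paper: the paper's Section on the transpose simply observes that $T$ is the transpose of $M$ (via the transposed recurrence and boundary conditions) and then reads off each part of Theorem \ref{thm+fun+ell+transpose} from the corresponding part of Theorem \ref{thm+fun+ell}, exactly as you do. Your write-up in fact supplies slightly more detail (the induction establishing $T=M^T$ and the index relabeling in part (iii)) than the paper, which states the deduction as immediate.
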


\section{Applications}
In this section, we will present many applications of our results.

\subsection{The Delannoy-like triangle}
In \cite{MZ16}, Mu and Zheng studied the Delannoy-like triangle
$D(e, h) = [\widetilde{d}_{n,k}]_{n,k}$ defined by the recurrence
relation
$$\widetilde{d}_{0,0} = \widetilde{d}_{1,0} = \widetilde{d}_{1,1}=1, \widetilde{d}_{n,k} = \widetilde{d}_{n-1,k-1}  + h\,\widetilde{d}_{n-2,k-1}+ e\,\widetilde{d}_{n-1,k},$$
where both $e$ and $h$ are nonnegative and $\widetilde{d}_{n,k} = 0$
unless $n\geq k \geq0$.
Some well-known combinatorial triangles are such matrices, including
the Pascal triangle $D(1, 0)$ \cite[A007318]{Slo}, the Fibonacci
triangle $D(0, 1)$ \cite[A026729]{Slo}, and the Delannoy triangle
$D(1, 1)$. Furthermore, the Schr\"{o}der triangle
\cite[A132372]{Slo} and Catalan triangle \cite[A033184]{Slo} also
arise as signless inverses of Delannoy-like triangles.

Mu and Zheng \cite[Theorem 1]{MZ16} used the Riordan array method to
prove the total positivity of the Delannoy-like triangle $D(e, h)$.
In addition, they \cite[Theorem 3]{MZ16} obtained that each row of
Delannoy-like triangles is a P\'olya frequency sequence by showing
that the row generating function
$\sum_{k\geq0}\widetilde{d}_{n,k}x^k$ has only real zeros. Taking
$t=\ell=a_n^{(0)}=1$ and $a_n^{(1)}=h$ for $n\geq0$, and $b_1=1$ and
$b_n=e$ for $n\geq2$ in Proposition \ref{prop+rec+1+M}, we find that
$D(e, h)$ is a special case of $M$. In consequence, these total
positivity results are immediate from Theorem \ref{thm+fun+ell} (i)
with $t=\ell=\beta_1=1$ and $\alpha_1=h$.

\subsection{The generalized Delannoy triangle}
Denote by $D_w(n, k)$ the total sum of the weights of all paths in
the plane from $(0,0)$ to $(k, n)$ with steps $(1,0)$, $(0,1)$,
$(1,1)$, and with nonnegative integer weights $a$, $b$, $c$,
respectively. Many properties of $D_w(n, k)$ have been studied in
the literature. From \cite {Raz02}, we know that
           \begin{equation}\label{formula+Del+w}
           D_w(n, k)=\sum\limits_{d\geq0}\binom{n+k-d}{k}\binom{k}{d}a^{k-d}b^{n-d}c^d
           \end{equation}
and
\begin{equation}\label{rec+Del+w}
D_w(n, k)=a\,D_w(n-1,k)+b\,D_w(n,k-1)+c\,D_w(n-1,k-1)
           \end{equation}
with the boundary conditions $D_w(n, 0) = a^n$ for $n\geq0$ and
$D_w(0,k) = b^k$ for $k\geq0$.

Let $d(n,k)=D_w(n-k, k)$. It follows from (\ref{rec+Del+w}) that we
have the following recurrence relation
$$d(n,k)=a\,d(n-1,k-1)+c\,d(n-2,k-1)+b\,d(n-1,k).$$
The infinite triangular matrix $D_w(a, b, c)= [d(n,k)]_{n,k}$ is
called {\it the generalized Delannoy triangle} and the square matrix
$D^{\ulcorner}_w(a, b, c)= [D_w(n,k)]_{n,k}$ is called {\it the
generalized Delannoy square}. In particular, $D^{\ulcorner}_w(1, 1,
0)$ is the Pascal square and $D^{\ulcorner}_w(1,1,1)$ is the
Delannoy square.

In general, many properties of the Pascal triangle and Delannoy
triangle can be extended to those of the generalized Delannoy
triangle. For example, similar to binomial coefficients, Razpet
\cite {Raz02} showed that $D_w(n, k)$ is one of the double sequences
satisfying the Lucas property:
$$D_w(\xi p+\eta,\lambda p+\mu)\equiv D_w(\xi,\lambda)D_w(\eta,\mu)~(mod\, p)$$
where $\xi, \eta, \lambda, \mu$ are nonnegative integers such that
 $0 \leq \eta < p, 0 \leq \mu< p$ for a prime number $p$. In \cite{CKS09},
 it was proved that $D_w(a, b, c)$ is the Riordan array
$\mathcal{R}\left(\frac{1}{1-bz},\frac{z(a+cz)}{1-bz}\right)$ and
the generating function $\varphi(z)$ for the row sums of the Riordan
array $D_w(a, b, c)$ is
$$ \varphi(z)=\frac{1}{1-(a+b)z-cz^2}.$$
It follows from (\ref{formula+Del+w}) that we also have
$$d_{n,k}=\sum\limits_{d\geq0}\binom{n-d}{k}\binom{k}{d}a^{k-d}b^{n-k-d}c^d.$$
For some other positivity properties of the generalized Delannoy
triangle can be stated in the following result, which are immediate
from Theorem \ref{thm+fun+ell} with $t=\ell=1,\alpha_1=c,\beta_1=a$
and $b_n=\gamma=b$.
\begin{prop}\label{Generalized Delannoy matrix}
Let $D_w(a, b, c)$ be the generalized Delannoy triangle. Then we
have
 \begin{itemize}
\item [\rm (i)]
the matrix $D_w(a, b, c)$ is totally positive;
\item [\rm (ii)]
the Toeplitz matrix of the each row sequence of $D_w(a, b, c)$ is
totally positive;
\item [\rm (iii)]
the Toeplitz matrix of the each column sequence of $D_w(a, b, c)$ is
totally positive;
\item [\rm (iv)]
 the Toeplitz matrix of the sequence $(d(n+\delta i,k+\sigma i))_{i}$
 is totally positive when $0\leq k\leq n$, $0<\delta$ and $max\{k,\delta\}<\sigma$;
 \item [\rm (v)]
the sequence $(d(n+\delta i,k+\sigma i))_{i}$
 is a P\'olya frequency sequence and infinitely log-concave when $0\leq k\leq n$, $0<\delta$ and $max\{k,\delta\}<\sigma$.
 \end{itemize}
\end{prop}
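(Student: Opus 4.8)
The plan is to identify the generalized Delannoy triangle $D_w(a,b,c)=[d(n,k)]_{n,k}$ as a specialization of the matrix $M$ and then to read off (i)--(v) directly from Theorem \ref{thm+fun+ell} and Proposition \ref{main+proppppp}. First I would compare the recurrence $d(n,k)=a\,d(n-1,k-1)+c\,d(n-2,k-1)+b\,d(n-1,k)$ with the recurrence (\ref{rec+1+M}) for $M$ in the case $t=\ell=1$, namely $M_{n,k}=a^{(0)}_n M_{n-1,k-1}+a^{(1)}_n M_{n-2,k-1}+b_n M_{n-1,k}$, and take $a^{(0)}_n=a$, $a^{(1)}_n=c$, $b_n=b$ for all $n\geq0$. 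I would then check that the initial and boundary data of $D_w(a,b,c)$ coincide with those prescribed for $M$ in Proposition \ref{prop+rec+1+M} (with $t=1$: $M_{0,0}=1$, $M_{n,k}=0$ for $n<0$ or $k<0$, and $M_{n,0}=\prod_{i=1}^{n}b_i$), so that $D_w(a,b,c)$ is exactly the matrix $M$ for this choice of data; equivalently, one may invoke the lattice-path description of Definition \ref{def+path} and exhibit a weight-preserving bijection with the weighted paths defining $d(n,k)$.

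Next, since $\ell=1$ and the weights $a^{(i)}_n,b_n$ are independent of $n$, the hypothesis of Theorem \ref{thm+fun+ell} holds automatically: $\sum_{i=0}^{1}a^{(i)}_n z^{i}=a+cz=\alpha_1 z+\beta_1$ with $\alpha_1=c$, $\beta_1=a$, and $b_n=\gamma$ with $\gamma=b$. Part (i) of Theorem \ref{thm+fun+ell} then gives the total positivity of $M$ and of the Toeplitz matrix of each row (legitimate since $t=1\geq1$); part (ii) gives the total positivity of the Toeplitz matrix of each column (using $b_n=\gamma$); and part (iii), applicable because $b_n=\gamma$ and $t=1\geq1$, gives the total positivity of the Toeplitz matrix of $(d(n+\delta i,k+\sigma i))_{i}$ under the restrictions $0\leq k\leq n$, $0<\delta$, $\max\{k,\delta\}<\sigma$. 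Here we use that $a,b,c$ are nonnegative, so that $(\balpha,\bbeta,\gamma)$-total positivity specializes to ordinary total positivity. This establishes (i)--(iv).

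Finally, (v) is the instance of Proposition \ref{main+proppppp}(iv) corresponding to this data: the polynomials $\sum_{i=0}^{1}a^{(i)}_n z^i=a+cz$ are the same for every $n$ and (when $c\neq0$) have the single non-positive real zero $-a/c$, so $(d(n+\delta i,k+\sigma i))_i$ is a P\'olya frequency and infinitely log-concave sequence; alternatively this follows from (iv) directly, since total positivity of the infinite Toeplitz matrix of a nonnegative sequence is precisely the P\'olya frequency property, and infinite log-concavity is then a general consequence. I expect the only genuine step to be the verification of the identification $D_w(a,b,c)=M$ --- matching the indexing of the generalized Delannoy triangle with Definition \ref{def+path} and checking the boundary conditions --- the substance being carried entirely by Theorem \ref{thm+fun+ell}.
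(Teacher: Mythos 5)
Your proposal is correct and follows essentially the same route as the paper: the paper also obtains the result by specializing Theorem \ref{thm+fun+ell} with $t=\ell=1$, $\alpha_1=c$, $\beta_1=a$ and $b_n=\gamma=b$, after identifying $D_w(a,b,c)$ with the matrix $M$ via the recurrence of Proposition \ref{prop+rec+1+M}, and deduces the P\'olya frequency and infinite log-concavity statement exactly as you do (via Proposition \ref{main+proppppp} or, equivalently, from the Toeplitz-total positivity in (iv)).
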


\subsection{The recursive matrix of Brenti}
For the Brenti's matrix $A=[A_{n,k}]_{n,k}$ in (\ref{rec+Brenti}),
Brenti \cite[Theorem 4.3]{Bre95} used a planar network method to
prove that the matrix $[A_{n,k}]_{n,k}$ and the Toeplitz matrix of
the each row sequence of $A$ are $(\textbf{x,y,z})$-totally
positive. By Proposition \ref{prop+rec+1+M} and Theorem
\ref{thm+bidiag} with $\ell=1,a_n^{(0)}=z_n,a_n^{(1)}=y_n,b_n=x_n$,
we immediately obtain:

\begin{cor}\emph{\cite[Theorem~4.3]{Bre95}}
The matrix $A$ and the Toeplitz matrix of the each row sequence of
$A$ with $t\geq 1$ are
 $(\textbf{x,y,z})$-totally positive.
\end{cor}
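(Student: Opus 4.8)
The plan is to derive this corollary directly from the machinery already established for the matrix $M$, by recognizing Brenti's recursive matrix $A$ in (\ref{rec+Brenti}) as a special instance of $M$. First I would match parameters: in Definition \ref{def+path} take $\ell=1$, and set the weights $a_n^{(0)}(\textbf{x})=z_n$, $a_n^{(1)}(\textbf{x})=y_n$, and $b_n(\textbf{y})=x_n$, with the step parameter $t$ being the same $t\in\mathbb{N}$ appearing in (\ref{rec+Brenti}). Comparing the recurrence (\ref{rec+1+M}) of Proposition \ref{prop+rec+1+M},
$$M_{n,k}=a^{(0)}_nM_{n-t,k-1}+a^{(1)}_nM_{n-t-1,k-1}+b_nM_{n-1,k},$$
with Brenti's recurrence $A_{n,k}=z_nA_{n-t,k-1}+y_nA_{n-1-t,k-1}+x_nA_{n-1,k}$, we see the two are formally identical; one then checks the initial/boundary conditions agree (both have $A_{0,0}=M_{0,0}=1$ and vanish for negative indices, and the degenerate-row conventions in Proposition \ref{prop+rec+1+M} coincide with Brenti's $A_{n,k}=0$ for $n<0$ or $k<0$). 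Hence $A=M$ under this identification of indeterminates, and in particular $(\textbf{x},\textbf{y},\textbf{z})$-total positivity of $M$ is exactly $(\textbf{x},\textbf{y},\textbf{z})$-total positivity of $A$ once we relabel $\textbf{x}\leftrightarrow b$-variables and $\textbf{x},\textbf{z}\leftrightarrow a$-variables appropriately.

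Next I would invoke Theorem \ref{thm+bidiag}: since $\ell=1$, the band matrix $\mathcal{A}$ reduces to a bidiagonal matrix, which is trivially $\textbf{x}$-totally positive, so Theorem \ref{thm+bidiag}(i) gives that $M$ (equivalently $A$) is $(\textbf{x},\textbf{y})$-totally positive — here $(\textbf{x},\textbf{y})$ stands for the full collection $\{x_n\},\{y_n\},\{z_n\}$ of Brenti's indeterminates after the relabeling above. Similarly, Theorem \ref{thm+bidiag}(ii) gives that the Toeplitz matrix of each row sequence of $A$ with $t\geq1$ is $(\textbf{x},\textbf{y})$-totally positive. This is precisely the statement of the corollary.

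The only real care needed is bookkeeping of which indeterminate set plays which role, and verifying that the boundary conditions genuinely coincide — in particular the cases $t=0$, $1\le n\le t-1$, and $k\ge 1$ with $n\le t-1$ in Proposition \ref{prop+rec+1+M} must be checked against Brenti's convention, but this is routine since Brenti simply sets those entries to $0$ (or, for $t=0$, they are forced by the recurrence). I do not expect any genuine obstacle: the content of the corollary is entirely absorbed into Theorem \ref{thm+bidiag}, and the proof is a two-line specialization plus a remark that the resulting total positivity recovers Brenti's Theorem 4.3. If one wishes, one can also note that the hypotheses $t\geq1$ and the permitted degenerate rows are exactly what make the row-Toeplitz conclusion meaningful, matching Brenti's framework.
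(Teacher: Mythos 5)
Your proposal is correct and matches the paper's own proof: the paper likewise obtains this corollary by specializing Proposition \ref{prop+rec+1+M} and Theorem \ref{thm+bidiag} with $\ell=1$, $a_n^{(0)}=z_n$, $a_n^{(1)}=y_n$, $b_n=x_n$, so that Brenti's matrix $A$ is recognized as the case $\ell=1$ of $M$. The bookkeeping of indeterminates and boundary conditions you describe is exactly the (routine) content implicit in the paper's one-line derivation.
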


\begin{rem}
For our approaches of the proof, except that one is algebraic
method, the construction of the planar network of our combinatorial
proof is different from that of Brenti.
\end{rem}

\subsection{The signless Stirling triangle of the first kind}

Let $c_{n,k}$ be the signless Stirling number of the first kind,
i.e., the number of permutations of $[n]$ containing exactly $k$
cycles. In addition, signless Stirling numbers of the first kind
$c_{n,k}$ satisfy the recurrence relation
$$c(n,k)=c(n-1,k-1)+(n-1)c(n-1,k),$$
where $c(n,0)=c(0,k)=0$ except $c(0,0)=1$. The signless Stirling
triangle of the first kind $[c(n,k)]_{n,k}$ is formed by signless
Stirling numbers of the first kind. Its row-generating polynomial
has a precise formula $$C_n(q):=\sum_{k=0}^nc(n,k)q^k=q(q+1)\cdots
(q+n-1)$$ and obviously has only real zeros. Thus each row is a
P\'olya frequency sequence and infinitely log-concave. Signless
Stirling numbers of the first kind have many nice properties (see
Comtet \cite{Com74} for details). By Proposition \ref{prop+rec+1+M}
and Theorem \ref{thm+bidiag} with $\ell=t=1,a_n^{(0)}=1,a_n^{(1)}=0$
and $b_n=n-1$, we immediately obtain:

\begin{cor}\emph{\cite{Bre95}}
The signless Stirling triangle of the first kind $[c(n,k)]_{n,k}$
and the Toeplitz matrix of the each row sequence of $[c(n,k)]_{n,k}$ are
totally positive.
\end{cor}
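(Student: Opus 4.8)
The plan is to realize the signless Stirling triangle of the first kind as a special case of the matrix $M$ from Definition~\ref{def+Matrix} and then quote Theorem~\ref{thm+bidiag}. First I would record the standard recurrence $c(n,k)=c(n-1,k-1)+(n-1)c(n-1,k)$ with $c(0,0)=1$ and $c(n,0)=c(0,k)=0$ otherwise, and compare it with the recurrence~\eqref{rec+1+M} in Proposition~\ref{prop+rec+1+M}. With the parameter choices $t=1$, $\ell=1$, $a^{(0)}_n(\textbf{x})=1$, $a^{(1)}_n(\textbf{x})=0$ and $b_n(\textbf{y})=n-1$, the recurrence~\eqref{rec+1+M} becomes $M_{n,k}=M_{n-1,k-1}+(n-1)M_{n-1,k}$, which is exactly the Stirling recurrence; one also checks that the initial/boundary conditions listed in Proposition~\ref{prop+rec+1+M} specialize correctly (in particular $b_1=0$ forces $M_{1,0}=0$, matching $c(1,0)=0$, and $M_{0,0}=1=c(0,0)$). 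Hence $M_{n,k}=c(n,k)$ for all $n,k\ge 0$, so $M=[c(n,k)]_{n,k}$.

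Next, since the chosen weights $a^{(0)}_n=1$, $a^{(1)}_n=0$ and $b_n=n-1$ are nonnegative constants (no indeterminates $\textbf{x},\textbf{y}$ are present, so $(\textbf{x},\textbf{y})$-total positivity collapses to ordinary total positivity), and since $\ell=1$ and $t=1\ge 1$, I would invoke Theorem~\ref{thm+bidiag}: part~(i) yields that $M=[c(n,k)]_{n,k}$ is totally positive, and part~(ii) yields that the Toeplitz matrix of each row sequence of $M=[c(n,k)]_{n,k}$ is totally positive. That is precisely the statement of the corollary.

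There is essentially no obstacle here: the entire content is the verification that the Stirling recurrence and its boundary data match the specialization of~\eqref{rec+1+M}, which is a routine check. The only point deserving a moment's care is the boundary behavior at small $n$ (the case $1\le n\le t-1$ is empty here since $t=1$, and the $t=0$ clause does not apply), so one should simply confirm that the piecewise initial conditions in Proposition~\ref{prop+rec+1+M} reduce, for $t=\ell=1$, to $M_{0,0}=1$ and $M_{n,0}=\prod_{i=1}^{n}b_i=\prod_{i=1}^{n}(i-1)=0$ for $n\ge 1$, in agreement with $c(n,0)=0$ for $n\ge 1$. Once this identification is in place, the total positivity of both $[c(n,k)]_{n,k}$ and the Toeplitz matrix of each of its rows is immediate from Theorem~\ref{thm+bidiag}.
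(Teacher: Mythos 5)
Your proposal is correct and follows exactly the paper's own route: specialize Proposition \ref{prop+rec+1+M} with $t=\ell=1$, $a_n^{(0)}=1$, $a_n^{(1)}=0$, $b_n=n-1$ to identify $M$ with $[c(n,k)]_{n,k}$, then apply Theorem \ref{thm+bidiag} (i) and (ii). Your extra verification of the boundary conditions is a harmless elaboration of what the paper leaves implicit.
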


\subsection{The Legendre-Stirling triangle of the first kind}

The Legendre-Stirling numbers of the first kind  ${Ps}_n^{(k)}$
satisfy the following recurrence relation:
$${Ps}_n^{(k)}={Ps}_{n-1}^{(k-1)}+n(n-1){Ps}_{n-1}^{(k)}~~(n,k\geq1)$$
with the initial conditions
${Ps}_n^{(0)}={Ps}_0^{(k)}=0,{Ps}_0^{(0)}=1$. Similar to the
combinatorial interpretation of Stirling numbers of the first kind,
${Ps}_n^{(k)}$ counts the number of Legendre-Stirling permutation
pairs $(\pi_1, \pi_2)$ of length $n$ in which $\pi_2$ has exactly
$k$ cycles \cite{Egg10}. The Legendre-Stirling triangle of the first
kind $[{Ps}_n^{(k)}]_{n,k}$ is formed by Legendre-Stirling numbers
of the first kind. From Andrews and Littlejohn \cite{AL09}, we know
that the row-generating polynomial of Legendre-Stirling triangle of
the first kind equals
$$\sum\limits_{k=0}^n{Ps}_n^{(k)}x^k=x(x+2)\cdots(x+n(n-1)).$$
Obviously, the row-generating polynomial has only real zeros and is
infinitely log-concave. For more details on Legendre-Stirling
numbers of the first kind ${Ps}_n^{(k)}$, we refer the reader to
\cite{AGL11,AL09,Egg10,ELW02}. By Proposition \ref{prop+rec+1+M} and
Theorem \ref{thm+bidiag} with $\ell=t=1,a_n^{(0)}=1,a_n^{(1)}=0$ and
$b_n=(n-1)n$, we immediately obtain:
\begin{cor}
The Legendre-Stirling triangle of the first kind
$[{Ps}_n^{(k)}]_{n,k}$ and the Toeplitz matrix of the each row sequence of
$[{Ps}_n^{(k)}]_{n,k}$  are totally positive.
\end{cor}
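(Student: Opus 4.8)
The plan is to recognize the Legendre-Stirling triangle of the first kind as a special instance of the matrix $M$ from Definition \ref{def+Matrix}, and then simply quote the already-proven machinery. First I would compare the recurrence
$${Ps}_n^{(k)}={Ps}_{n-1}^{(k-1)}+n(n-1){Ps}_{n-1}^{(k)}$$
with the recurrence (\ref{rec+1+M}) for $M_{n,k}$, namely
$$M_{n,k}=a^{(0)}_n M_{n-t,k-1}+a^{(1)}_n M_{n-t-1,k-1}+\cdots+a^{(\ell)}_n M_{n-t-\ell,k-1}+b_n M_{n-1,k}.$$
Setting $t=\ell=1$, $a_n^{(0)}=1$, $a_n^{(1)}=0$, and $b_n=(n-1)n$, the general recurrence collapses to $M_{n,k}=M_{n-1,k-1}+(n-1)n\,M_{n-1,k}$, which matches. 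I would also check that the initial conditions of $[{Ps}_n^{(k)}]_{n,k}$ (namely ${Ps}_0^{(0)}=1$ and ${Ps}_n^{(0)}={Ps}_0^{(k)}=0$ otherwise, and the vanishing for $k>n$) agree with those listed in Proposition \ref{prop+rec+1+M} for this choice of parameters; this is a brief verification since $t=1$ forces $M_{0,k}=0$ for $k\geq1$ and $M_{n,0}=\prod_{i=1}^n b_i$.

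Once the identification $[{Ps}_n^{(k)}]_{n,k}=M$ is in place, total positivity of the triangle follows directly from Theorem \ref{thm+bidiag}(i), and total positivity of the Toeplitz matrix of each row follows from Theorem \ref{thm+bidiag}(ii) (which applies because $t=1\geq1$). Indeed, with $\ell=1$ the auxiliary matrix $\mathcal{A}$ is bidiagonal, hence $\textbf{x}$-totally positive, so both conclusions of Theorem \ref{thm+bidiag} are available with no further hypotheses. Since here there are no indeterminates ($a_n^{(i)}$ and $b_n$ are concrete nonnegative numbers), $(\textbf{x},\textbf{y})$-total positivity specializes to ordinary total positivity.

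The statement about the row-generating polynomial $\sum_k {Ps}_n^{(k)}x^k=x(x+2)\cdots(x+n(n-1))$ is quoted from Andrews and Littlejohn \cite{AL09}; from it one reads off that this polynomial has only (nonpositive) real zeros, and then invoking the classical fact that a finite sequence is a P\'olya frequency sequence iff its generating polynomial has only nonpositive real zeros (\cite[pp.~399]{Kar68}), together with Br\"and\'en's theorem that a P\'olya frequency finite sequence is infinitely log-concave, gives that each row is P\'olya frequency and infinitely log-concave — exactly as already remarked in the surrounding text for Stirling numbers. There is essentially no obstacle here: the only real content is the verification that the parameter specialization is correct, and that is routine matching of recurrences and initial data. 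The main (mild) point to be careful about is the boundary behavior at small $n$, i.e.\ confirming that the $t\geq1$ initial-condition clauses of Proposition \ref{prop+rec+1+M} reproduce ${Ps}_n^{(0)}=0$ for $n\geq1$ rather than a product of $b_i$'s — but with $b_1=0\cdot1=0$ this product is automatically $0$, so consistency holds.
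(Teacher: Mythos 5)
Your proposal is correct and follows exactly the paper's own route: the paper proves this corollary by the same specialization of Proposition \ref{prop+rec+1+M} and Theorem \ref{thm+bidiag} with $\ell=t=1$, $a_n^{(0)}=1$, $a_n^{(1)}=0$ and $b_n=(n-1)n$, so the identification of the triangle with $M$ and the application of both parts of Theorem \ref{thm+bidiag} match the paper's argument. Your extra check of the boundary behavior (that $b_1=0$ forces $M_{n,0}=0$ for $n\geq 1$) and the remarks on P\'olya frequency are fine but not needed for the stated corollary.
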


\subsection{The Jacobi-Stirling triangle of the first kind}
The Jacobi-Stirling numbers ${Jc}_n^{(k)}(z)$ of the first kind
introduced in \cite{EKLWY07} satisfy the following recurrence
relation:
$${Jc}_n^{(k)}(z)={Jc}_{n-1}^{(k-1)}(z)+(n-1)(n-1+z){Jc}_{n-1}^{(k)}(z)~~(n,k\geq1)$$
with the initial conditions
${Jc}_n^{(0)}(z)={Jc}_0^{(k)}(z)=0,{Jc}_0^{(0)}(z)=1$ and
$z=\alpha+\beta+1>-1$. For $\alpha=\beta=0$, the Jacobi-Stirling
numbers of the first kind reduce to Legendre-Stirling numbers of the
first kind. Mongelli \cite{Mon122} showed that the Jacobi-Stirling
numbers of the first kind are specializations of the elementary
symmetric functions by
$${Jc}_n^{(k)}(z)=e_{n-k}(1(1+z),2(2+z),\ldots,(n-1)(n-1+z)).$$
We call the triangle $[{Jc}_n^{(k)}(z)]_{n,k}$ {\it the
Jacobi-Stirling triangle of the first kind.} Its row-generating
polynomial is
$$\sum\limits_{k=0}^n{Jc}_n^{(k)}x^k=x(x+1+z)\cdots[x+(n-1+z)(n-1)].$$
Obviously, the row-generating polynomial has only real zeros, and is
therefore infinitely log-concave. Note a fact that a matrix is
totally positive if and only if its singless inverse (see \cite[pp.
6]{Pin10} for instance). In \cite[Theorem~5]{Mon12}, Mongelli
obtained the total positivity of $[{Jc}_n^{(k)}(z)]_{n,k}$ by using
a planar network method to prove that the singless inverse matrix of
$[{Jc}_n^{(k)}(z)]_{n,k}$ is totally positive. Mongelli
\cite[Proposition~3]{Mon12} also used generating functions to prove
that the Toeplitz matrix of the each row sequence of
$[{Jc}_n^{(k)}(z)]_{n,k}$ is totally positive. These total
positivity results are also immediate from Proposition
\ref{prop+rec+1+M} and Theorem \ref{thm+bidiag} with
$\ell=t=1,a_n^{(0)}=1,a_n^{(1)}=0,$ and $b_n=(n-1)(n-1+z)$. Finally,
we refer the reader to \cite{AEGL13,EKLWY07,GZ10,Mon12} for more
properties of the Jacobi-Stirling numbers of the first kind.

\subsection{The Stirling triangle of the second kind}
Let $S(n,k)$ denote the Stirling number of the second kind, which enumerates the number of
partitions of a set with $n$ elements consisting of $k$ disjoint nonempty sets. It is well-known
that the Stirling number of the second kind satisfies the recurrence relation
$$ S(n,k)=S(n-1,k-1)+kS(n-1,k)$$
where initial conditions $S(0,0)=1$ and $S(0,k)=0$ for $k\geq1$ or
$k<0$. It is not hard to find that $S(n,k)k!$ is the number of
surjections from an $n$-element set to a $k$-element set. The
triangular array $[S(n,k)]_{n,k}$ is called {\it the Stirling
triangle of the second kind.} Its row-generating polynomial
$B_n(q)=\sum_{k=0}^nS(n,k)q^k$ is called {\it the Bell polynomial}
and has only real zeros. In particular, its each row sequence is
log-concave. The Bell polynomial is closely related to the
differential operator $D=\frac{d}{dx}$ by
$$\left(xD\right)^n=\sum_{k=1}^nS(n,k)x^kD^k.$$
The famous Bell number $B_n(1)$ counts the total number of
partitions of a set with $n$ elements consisting of disjoint
nonempty sets. The Stirling triangle of the second kind
$[S(n,k)]_{n,k}$ is totally positive, which can be proved by
different methods \cite{Bre95,CLW15,Zhu14}. Such total positivity
can also be implied by Theorem \ref{thm+main+transpose} with
$t=0,\ell=1,a_k^{(0)}=k,a_k^{(1)}=1$ and $b_k=0$. Finally, for more
other properties of Stirling numbers of the second kind, we refer
the reader to Comtet \cite{Com74} and \cite[A008277]{Slo}.

\subsection{The generalized Jacobi-Stirling triangle of the second kind}

The Jacobi-Stirling numbers ${JS}_n^{(k)}(z)$ of the second kind,
which were introduced in \cite{EKLWY07}, are the coefficients of the
integral composite powers of the Jacobi differential operator:
$$ L_{\alpha,\beta}[y](u)=\frac{1}{(1-u)^{\alpha}(1+u)^{\beta}}(-(1-u)^{\alpha+1}(1+u)^{\beta+1}y'(u))'$$
with fixed real parameters $\alpha,\beta\geq -1$. They also satisfy
the following recurrence relation:
$${JS}_n^{(k)}(z)={JS}_{n-1}^{(k-1)}(z)+k(k+z){JS}_{n-1}^{(k)}(z)~~(n,k\geq1)$$
with the initial conditions
${JS}_n^{(0)}(z)={JS}_0^{(k)}(z)=0,{JS}_0^{(0)}(z)=1$ and
$z=\alpha+\beta+1$. For $\alpha=\beta=0$, the Jacobi-Stirling
numbers of the second kind reduce to the Legendre-Stirling numbers
of the second kind \cite{AGL11,Egg10,ELW02}. Similar to the
Jacobi-Stirling numbers of the first kind, in \cite{Mon122}, it was
demonstrated Jacobi-Stirling numbers of the second kind are
specializations of the complete symmetric functions by
$${JS}_n^{(k)}(z)=h_{n-k}(1(1+z),2(2+z),\ldots,k(k+z)).$$
In addition, many properties of the classical Stirling numbers have
been extended to those of the Jacobi-Stirling numbers and
Legendre-Stirling numbers, e.g., Jacobi-Stirling numbers of two
kinds satisfy the orthogonal relation:
$$\sum\limits_{k=1}^n(-1)^{k+j}{JS}_i^{(k)}{Jc}_k^{(j)}=\delta_{i,j}.$$
Mongelli \cite{Mon12} established total positivity of the
Jacobi-Stirling triangle $[{JS}_n^{(k)}(z)]_{n,k}$. We refer the
reader to Andrews {\it et al.}~\cite{AEGL13}, Everitt {\it et
al.}~\cite{EKLWY07}, Gelineau and Zeng~\cite{GZ10} for more
properties of the Jacobi-Stirling numbers of the second kind.

In \cite{Zhu14}, Zhu  proposed a generalization of Jacobi-Stirling numbers of the second kind.
Let $[J_{n,k}]_{n,k}$ be an array of nonnegative numbers satisfying the recurrence
relation
$$J_{n,k}=(a_1k^2+a_2k+a_3)J_{n-1,k}+(b_1k^2+b_2k+b_3)J_{n-1,k-1}$$
with $a_1k^2+a_2k+a_3\geq0$ and  $b_1k^2+b_2k+b_3\geq0$ for $k\geq0$, where initial conditions
 $J_{n,k}=0$ unless $0\leq k\leq n$ and $J_{0,0}=1$.
Except proving the P\'olya frequency properties of the row and
column sequences, Zhu \cite[Theorem~2.1]{Zhu14} also used an
algebraic method to prove that the matrix $[J_{n,k}]_{n,k}$ is
totally positive. Now such total positivity also follows from
Theorem \ref{thm+main+transpose} with
$t=0,\ell=1,a_k^{(0)}=a_1k^2+a_2k+a_3$, $a_k^{(1)}=b_1k^2+b_2k+b_3$
and $b_k=0$.

\section{Remarks}
If we replace the polynomial weights $a_n^i(x)$ and $b_n(y)$ in
definition \ref{def+path} with the indeterminates $a_n^i$ and $b_n$
in a partially ordered commutative ring, then we can generalize our
results for total positivity to total positivity in the ring
equipped with the coefficientwise order.





\end{document}